\newtheorem{theorem}{Theorem}[section]
\newtheorem{lemma}[theorem]{Lemma}
\newtheorem{remark}[theorem]{Remark}
\newtheorem{proposition}[theorem]{Proposition}
\newtheorem{rhp}{RH Problem}
\numberwithin{equation}{section}
\begin{document}

\begin{frontmatter}
\title{$L^{2}$-Sobolev space bijectivity and existence of global solutions for the  matrix nonlinear Schr\"{o}dinger equations}

\author[inst2]{Yuan Li}
\author[inst2]{Xinhan Liu}
\author[inst2]{Engui Fan$^{*,}$}

\address[inst2]{ School of Mathematical Sciences and Key Laboratory of Mathematics  for Nonlinear Science, Fudan University, Shanghai, 200433, China\\
* Corresponding author and e-mail address: faneg@fudan.edu.cn  }

\begin{abstract}
We consider  the Cauchy problem to the general defocusing and focusing $p\times q$ matrix nonlinear Schr\"{o}dinger (NLS) equations  with initial data
 allowing arbitrary-order poles and spectral singularities. By establishing the $L^{2}$-Sobolev space bijectivity of the direct and inverse scattering transforms associated with a $(p+q)\times(p+q)$ matrix spectral problem, we prove that both  defocusing and focusing  matrix NLS equations are globally well-posed   in the weighted Sobolev space $H^{1,1}(\mathbb{R})$.
\end{abstract}

\begin{keyword}
   Matrix nonlinear Schr\"{o}dinger equations \sep Inverse scattering transform   \sep Cauchy projection operator  \sep $L^{2}$-Sobolev space bijectivity  \sep Global solutions.  

  \textit{Mathematics Subject Classification: }35P25; 35Q15; 35Q35; 35A01.

  \end{keyword}
\end{frontmatter}
\tableofcontents

\section{Introduction}
In this work, by establishing  the $L^{2}$-Sobolev space bijectivity of the direct and inverse scattering transforms associated with the $(p+q)\times(p+q)$ matrix spectral problem (\ref{2.1}),
 we further prove the  existence of global solutions to  the Cauchy problem for  the following  general $p\times q$  
  matrix nonlinear Schr\"{o}dinger (NLS)  equation  \cite{Tsu,Zak}
 \begin{align}
&i\boldsymbol{Q}_{t}+\boldsymbol{Q}_{xx}-2\sigma \boldsymbol{Q}\boldsymbol{Q}^{\dag}\boldsymbol{Q}=0_{p\times q},\quad \sigma=\pm1,\quad x\in\mathbb{R},\label{mnls}\\
&\boldsymbol{Q}(x,0)=\boldsymbol{Q}_{0}(x),\label{mnls1}
\end{align}
where $\boldsymbol{Q}(x,t)$ is a $p\times q$ matrix valued function, $0_{p\times q}$ is the $p\times q$ zero matrix and the sign  $\dag$ denotes Hermitian conjugate. The choice $\sigma=1$ and $\sigma=-1$ corresponding to the defocusing and focusing matrix NLS equations, respectively.

The   matrix NLS equation  \eqref{mnls} is a  natural matrix generalization   of the well-known scalar NLS equation
$$ iq_t +q_{xx}-2\sigma|q|^2q=0,\quad \sigma=\pm1. $$
 Ablowitz et al. \cite{Abl} detailedly described matrix generalization of the inverse scattering transform (IST) and dynamics of the soliton interactions for the $p\times q$ matrix NLS equation  \eqref{mnls}. A class of explicit solutions for the focusing case of \eqref{mnls} were derived by Demontis and Mee \cite{Dem}. When $p=q$, the $p\times p$ matrix NLS equation under the vanishing boundary conditions has an infinite number of conservation laws and multi-soliton solutions \cite{Tsu}. For the $p\times p$ square matrix NLS equation  with nonvanishing boundary conditions, Ieda et al. \cite{Ieda2} considered the IST on a two-sheeted Riemann surface and cut plane, Prinari et al. \cite{Pri1} advanced this result and developed the IST on the standard complex plane.
  We have to emphasize that if $p=q=2$ and $\boldsymbol{Q}(x,t)$ is a symmetric matrix defined by
\begin{equation*}
\boldsymbol{Q}(x,t)=\begin{bmatrix}q_{1}(x,t)&q_{0}(x,t)\\q_{0}(x,t)&q_{-1}(x,t)\end{bmatrix},
\end{equation*}
the matrix NLS equation  \eqref{mnls} reduces  to the multi-component Gross-Pitaevskii (GP) equation
\begin{equation}\label{GP}
\left\{\begin{aligned}
&i\partial_{t}q_{1}+\partial_{x}^{2}q_{1}-2\sigma q_{1}\left(|q_{1}|^{2}+2|q_{0}|^{2}\right)-2\sigma q_{0}^{2}q_{-1}^{*}=0,\\
&i\partial_{t}q_{-1}+\partial_{x}^{2}q_{-1}-2\sigma q_{-1}\left(|q_{-1}|^{2}+2|q_{0}|^{2}\right)-2\sigma q_{0}^{2}q_{1}^{*}=0,\quad \sigma=\pm1, \\
&i\partial_{t}q_{0}+\partial_{x}^{2}q_{0}-2\sigma q_{0}\left(|q_{1}|^{2}+|q_{0}|^{2}+|q_{-1}|^{2}\right)-2\sigma q_{1}q_{0}^{*}q_{-1}=0,
\end{aligned}
\right.
\end{equation}
which can be use to describe the  Bose-Einstein condensates \cite{Ieda,Kev}.
When $\sigma=1$, the interatomic interactions are repulsive and the spin-exchange interactions are anti-ferromagnetic \cite{Uch}, while $\sigma=-1$  corresponds to attractive interatomic interactions and ferromagnetic spin-exchange interactions \cite{Kur,LiST}.
In recent years, Prinari et al. \cite{Pri2} investigated the soliton solutions and their properties   for the equations \eqref{GP}  with  the IST method.
Yan \cite{Yan1,Yan2} extended  the Fokas unified method  to investigate the initial-boundary value problem for equation  \eqref{GP}.  The long-time asymptotics for the solution to  the  Cauchy problem of the  GP equation  \eqref{GP} in the focusing case $\sigma=-1$   was  obtained by Geng et al. \cite{Geng}.

The $p\times q$ matrix NLS equations \eqref{mnls} admit the following  $(p+q)\times(p+q)$  AKNS-type    spectral problem      \cite{Tsu}
\begin{equation}
\psi_{x}=-ik\sigma_{3}\psi+U\psi, \label{2.1}
\end{equation}
where
\begin{equation}\label{Q}
\sigma_{3}=\begin{bmatrix}I_{p}&0_{p\times q}\\0_{q\times p}&-I_{q}\end{bmatrix},\quad
U=\begin{bmatrix}0_{p\times p}&\boldsymbol{Q}\\ \sigma \boldsymbol{Q}^{\dag}&0_{q\times q}\end{bmatrix},\quad \sigma=\pm1.
\end{equation}
Here $k\in\mathbb{C}$ is a spectral parameter, $I_{p}$ and $I_{q}$ are the $p\times p$ and $q\times q$ identity matrix, respectively. In 1998, Zhou \cite{Zhou1998} first established the $L^{2}$-Sobolev space bijectivity for the scattering-inverse scattering transforms for the case $p=q=1$. Later, Liu \cite{LiuJQ} extended Zhou's bijectivity result to the case   $p=q=2$, which is the first step towards establishing the bijectivity of the scattering-inverse scattering transforms for the more general $(p+q)\times(p+q)$  spectral problem   \eqref{2.1}.  For higher order matrix  spectral problem, the scattering-inverse scattering analysis will become very complicated. To the best of our knowledge, there still  has been  no  work  on  the bijectivity of the scattering-inverse scattering transforms
 for  the    spectral problem    \eqref{2.1} so far.

The   purpose of our  work is to generalize the results of Zhou and Liu  \cite{Zhou1998,LiuJQ} to the  spectral problem  \eqref{2.1}.
We  shall  establish  the bijectivity of the scattering-inverse scattering transforms, further prove global well-posedness  to the $p\times q$ matrix NLS equations \eqref{mnls} for the initial data $\boldsymbol{Q}_{0}(x)$  allowing arbitrary-order poles and spectral singularities.

\subsection{Main results}
Our main results are the following:
\begin{theorem}\label{thm}  Let $\boldsymbol{Q}$ be the potential of the spectral problem  \eqref{2.1},  
$v_{\pm}(k)$ and $V_{\pm}(k)$ be the scattering data  associated with $\boldsymbol{Q}$ corresponding  to  the defocusing  and focusing cases, respectively.
 Then we have 
\begin{enumerate}[label=(\roman*)]
  \item For defocusing case ($\sigma=1$), the direct scattering and inverse scattering maps
  \begin{align}
  \mathcal{D}: \ &H^{1,1}(\mathbb{R})\ni \boldsymbol{Q}\longmapsto v_{\pm}-I_{p+q}\in H^{1,1}(\mathbb{R})\label{D11}\\
  \mathcal{I}: \ &H^{1,1}(\mathbb{R})\ni v_{\pm}-I_{p+q} \longmapsto \boldsymbol{Q}\in H^{1,1}(\mathbb{R})\label{I11}
  \end{align}
  are Lipschitz continuous.
  \item For focusing case ($\sigma=-1$), the direct scattering and inverse scattering maps
  \begin{align}
  \mathcal{D}: \ &H^{1,1}(\mathbb{R})\ni \boldsymbol{Q}\longmapsto V_{\pm}-I_{p+q}\in H^{1,1}(\partial\Omega_{\pm})\label{D22}\\
  \mathcal{I}: \ &H^{1,1}(\partial\Omega_{\pm})\ni V_{\pm}-I_{p+q} \longmapsto \boldsymbol{Q}\in H^{1,1}(\mathbb{R})\label{I22}
  \end{align}
  are Lipschitz continuous.
\end{enumerate}
Moreover, given initial data  $\boldsymbol{Q}_{0}\in H^{1,1}(\mathbb{R})$ without any spectral restrictions, there exists a unique solution $\boldsymbol{Q}\in C\left([-T,T], H^{1,1}(\mathbb{R})\right)$ of the Cauchy problem \eqref{mnls}-\eqref{mnls1} for every $T>0$. The map $\boldsymbol{Q}_{0}\mapsto \boldsymbol{Q}$ is Lipschitz continuous from $H^{1,1}(\mathbb{R})$ to $C\left([-T,T], H^{1,1}(\mathbb{R})\right)$ for every $T>0$.
\end{theorem}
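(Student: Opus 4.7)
The plan is to prove the bijectivity statements (i) and (ii) via the direct--inverse scattering transform machinery associated with the spectral problem \eqref{2.1}, and then obtain global well-posedness by transporting the trivial time evolution on the scattering data back to the potential through the inverse map. For the direct map in the defocusing case, I would first construct Jost solutions $\psi_{\pm}(x,k)$ of \eqref{2.1} as fixed points of the standard Volterra integral equations. Since $\boldsymbol{Q}\in H^{1,1}(\mathbb{R})\subset L^{1}\cap L^{2}$, Picard iteration converges and yields bounded analytic extensions of the appropriate columns to $\mathbb{C}^{\pm}$ with continuous boundary values on $\mathbb{R}$; the scattering matrix and hence $v_{\pm}(k)$ is read off from the transition matrix between $\psi_{-}$ and $\psi_{+}$. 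The nontrivial regularity statement $v_{\pm}-I_{p+q}\in H^{1,1}(\mathbb{R})$ requires two ingredients: $k$-differentiability, handled by differentiating the Volterra equation under the integral sign and exploiting that $x\boldsymbol{Q}\in L^{2}$, and the $\langle k\rangle$-weight in $L^{2}_{k}$, obtained by trading a factor of $k$ for a $\partial_{x}$ via integration by parts that lands on $\boldsymbol{Q}_{x}\in L^{2}$. Lipschitz dependence on $\boldsymbol{Q}$ follows by applying the same estimates to differences of the Volterra iterates.

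For the inverse map in the defocusing case, I would formulate a $(p+q)\times(p+q)$ Riemann--Hilbert problem with contour $\mathbb{R}$, jump $v_{\pm}$, and normalization $I_{p+q}$ at infinity, and reduce it to the Beals--Coifman singular integral equation $(I-C_{w})\mu = I_{p+q}$ on $L^{2}(\mathbb{R})$ built out of the Cauchy projectors $C_{\pm}$ and a factorization of $v_{\pm}-I_{p+q}$. In this case the absence of discrete spectrum and of spectral singularities, which is a consequence of the Hermitian structure forced by $\sigma=1$, guarantees that $I-C_{w}$ is invertible with a resolvent that is uniformly bounded under small $L^{\infty}$ perturbations of $v_{\pm}-I_{p+q}$. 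The potential is then recovered from the $1/k$ coefficient in the large-$k$ expansion of the RHP solution; differentiating the singular integral equation in $x$ and combining with the resolvent bound gives $\boldsymbol{Q}\in H^{1,1}(\mathbb{R})$, and the resolvent stability yields Lipschitz continuity of $\mathcal{I}$.

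The principal new difficulty lies in the focusing case (ii), where an $H^{1,1}$ initial datum may generate discrete eigenvalues of arbitrary order together with spectral singularities on $\mathbb{R}$, so that the purely continuous RHP on $\mathbb{R}$ fails to be solvable. Following the augmentation technique of Zhou \cite{Zhou1998} and Liu \cite{LiuJQ}, I would deform the jump contour to $\partial\Omega_{\pm}$, where $\Omega_{\pm}\subset\mathbb{C}^{\pm}$ are open sets encircling all discrete data and real spectral singularities, and absorb the residue structure into a modified jump $V_{\pm}$ on $\partial\Omega_{\pm}$; this converts the problem back to a continuous RHP to which the Beals--Coifman framework applies on $L^{2}(\partial\Omega_{\pm})$. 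I expect the hardest technical point of the entire paper to be the uniform Fredholm estimate for $I-C_{w}$ on this deformed contour, since it is precisely this estimate that simultaneously delivers solvability of the RHP, $H^{1,1}$ regularity of the reconstructed potential, and Lipschitz dependence on the scattering data. Verifying that the direct map $\mathcal{D}$ in (ii) indeed lands in $H^{1,1}(\partial\Omega_{\pm})$ then requires pushing the Jost-function estimates across the complex contour, using analyticity of $\psi_{\pm}$ in the appropriate half-planes together with the $H^{1,1}$ control already obtained on $\mathbb{R}$.

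Once (i) and (ii) are established, global well-posedness follows by the standard dressing picture: under the flow \eqref{mnls} the scattering data evolves by the explicit conjugation $v_{\pm}(k,t)=e^{-2ik^{2}t\sigma_{3}}v_{\pm}(k,0)e^{2ik^{2}t\sigma_{3}}$, and analogously for $V_{\pm}$ on $\partial\Omega_{\pm}$. This conjugation is an isometry of $H^{1,1}$ and is continuous in $t$, so composing $\mathcal{D}$, the time flow, and $\mathcal{I}$ gives a map $\boldsymbol{Q}_{0}\mapsto\boldsymbol{Q}(\cdot,t)$ in $C([-T,T],H^{1,1}(\mathbb{R}))$ for every $T>0$, with Lipschitz dependence on $\boldsymbol{Q}_{0}$ inherited from the Lipschitz continuity of $\mathcal{D}$ and $\mathcal{I}$. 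Compatibility of the $t$-part of the AKNS Lax pair with the RHP ensures that the reconstructed $\boldsymbol{Q}(x,t)$ solves \eqref{mnls}--\eqref{mnls1}, and uniqueness follows from injectivity of the composed map.
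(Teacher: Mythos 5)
Your overall architecture --- Volterra construction of Jost solutions, Beals--Coifman reduction of the RHP, Zhou's vanishing lemma plus uniform resolvent bounds for the inverse map, and transport of the explicit time evolution $e^{-2ik^{2}t\operatorname{ad}\sigma_{3}}$ through $\mathcal{I}\circ(\text{flow})\circ\mathcal{D}$ --- is exactly the paper's program, and parts (i) and the direct map of (ii) are sketched in essentially the same way the paper executes them. However, two points in your proposal are genuine gaps rather than omitted detail.

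First, your description of the focusing case --- encircling ``all discrete data and real spectral singularities'' by sets $\Omega_{\pm}\subset\mathbb{C}^{\pm}$ and ``absorbing the residue structure into a modified jump'' --- would not work under the hypotheses of the theorem. The initial data is allowed to produce zeros of $\det\boldsymbol{A}$ and $\det\boldsymbol{D}$ of arbitrary order, including zeros \emph{on} $\mathbb{R}$ (spectral singularities), where there is no residue structure to absorb and where $M$ fails to have boundary values at all; moreover a contour lying entirely in $\mathbb{C}^{\pm}$ cannot excise a real singularity. The construction the paper actually uses (Zhou's) is different in kind: one chooses $x_{0}$ so that the cut-off potential $\boldsymbol{Q}\chi_{(x_{0},+\infty)}$ has small $L^{1}$ norm, whence its scattering data $\boldsymbol{A}_{0},\boldsymbol{D}_{0}$ are invertible everywhere; one then \emph{replaces} $M$ inside a single large disc $B(0,S_{\infty})$ (chosen so that all zeros of $\det\boldsymbol{A},\det\boldsymbol{D}$ lie inside) by an auxiliary solution $M^{(2)}$ built from the cut-off potential. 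The price is new jump matrices on the circle $\Gamma_{\infty}$, expressed through $\boldsymbol{D}_{0},\boldsymbol{A}$ and $m_{21}^{-}(x_{0},\cdot)$, a verification of the matching conditions at the self-intersection points $\pm S_{\infty}$, and (for the reconstruction estimates) a further conjugation by a rational matrix $\eta$ so that $\mathcal{V}_{\pm}-I_{p+q}$ vanishes at $\pm S_{\infty}$, together with an augmented contour $\widehat{\Gamma}$ reversing the orientation of $(-S_{\infty},S_{\infty})$. None of this is reachable from a residue-based deformation, so this step of your plan needs to be replaced, not merely elaborated.

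Second, your uniqueness argument is insufficient as stated: injectivity of the composed map $\mathcal{I}\circ(\text{flow})\circ\mathcal{D}$ only shows that the IST construction produces at most one solution \emph{of that form}; it does not rule out another solution of \eqref{mnls}--\eqref{mnls1} in $C([-T,T],H^{1,1}(\mathbb{R}))$ whose scattering data does not evolve by the prescribed conjugation. The paper closes this gap at the PDE level, writing the difference of two solutions via Duhamel's formula with the Schr\"odinger group, using that $H^{1}$ is a Banach algebra to control the cubic nonlinearity, and concluding by Gronwall's inequality. You need some such independent uniqueness argument (or a proof that every $H^{1,1}$ solution has scattering data evolving linearly) to complete the last assertion of the theorem.
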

\begin{remark}
The main difference between the defocusing ($\sigma=1$) and focusing ($\sigma=-1$) matrix NLS equations is that the scattering date need to be considered in different spaces
$H^{1,1}(\mathbb{R})$ defined in \eqref{sob} and $H^{1,1}(\partial\Omega_{\pm})$ defined in \eqref{space}. This is mainly determined by whether the jump relation is on a contour which has self-intersections.
\end{remark}
\begin{remark}
The space $H^{1,1}(\mathbb{R})$ and $H^{1,1}(\partial\Omega_{\pm})$ are determined by the $t$ Lax equation of the matrix NLS equation  \eqref{mnls} as follows
\begin{equation}\label{2.2}
\psi_{t}=-2ik^{2}\sigma_{3}\psi+P\psi,\quad
P=2kU+i\sigma_{3}U_{x}-i\sigma_{3}U^{2}.
\end{equation}
\end{remark}

The proof of Theorem \ref{thm} mainly rely on the IST method in the form of the Riemann-Hilbert (RH) problem and Zhou's approach \cite{Zhou1989b,Zhou1989a,Zhou1998}. As is well known, the IST method is an extremely classical approach in integrable systems, and in the last decade it has taken the powerful advantages in dealing with the well-posedness of integrable equations \cite{Bah1,Jen1,Jen2,Jenkins,LiuJQ,Liu16,Pel1,Peli}. It is worth mentioning that Jenkins et al.\cite{Jenkins} first applied Zhou's techniques \cite{Zhou1989b,Zhou1989a,Zhou1998} to solve the global well-posedness problems for the derivative NLS  equation with arbitrary singularities. Inspired by the idea of Jenkins et al. \cite{Jenkins} and based on the corresponding bijectivity results, we also give a well-posedness result for the matrix NLS equation  \eqref{mnls} in Theorem \ref{thm}. This further supports the opinion of Jenkins et al. \cite{Jenkins} that ``Zhou's methods are quite general and are likely applicable to well-posedness questions for other integrable PDEs in one space dimension.''

The $L^{2}$-Sobolev space bijectivity of the inverse scattering transform for the $(p+q)\times(p+q)$ matrix spectral problem is hard to establish because which involves a lot of matrix estimates and matrix calculations. The first obstacle is how to define the appropriate scattering data. If the scattering data are defined as scalar form  like   $2\times2$ or  $3\times3$ case, then we would be dealing with relations between $(p+q)^{2}$ scattering data, the difficulties of which are unimaginable. An efficient way to do this is to write the $(p+q)\times(p+q)$ matrix in the form of a $2\times2$ block matrix. However in this case the scattering data are matrix valued functions so how to guarantee the invertibility of the scattering data $\boldsymbol{A}(k), \boldsymbol{D}(k)$ and build the unique solvable RH problem becomes another obstacle. We discuss these problems in two cases. (i) For the defocusing case, we find that the matrix scattering data $\boldsymbol{A}(k)$ and $\boldsymbol{D}(k)$ are naturally invertible by using the properties of self-adjoint operator and the symmetry of the block scattering matrix $S(k)$. Not only that, but the latter makes the jump matrix $v(k)$ satisfy the conditions of vanishing lemma \cite{Zhou1989b}.
(ii) For the focusing case, the corresponding problems become very complicated. Unlike the defocusing case, $\boldsymbol{A}(k)$ and $\boldsymbol{D}(k)$ are not naturally invertible so we use Zhou's techniques to define a new function   $\boldsymbol{M} (x,k)$ to avoid any spectral restriction on the initial data. However, the computation of the jump matrix $V(k)$ along the contour $\Gamma$ (Figure \ref{f1}) is tricky since it involves the inverse of the $(p+q)\times(p+q)$ matrix, which can neither be written directly (like $2\times2$ matrix) nor with the help of the adjoint matrix (like $3\times3$ matrix). Fortunately, we find that we can overcome this difficulty by treating the $(p+q)\times(p+q)$ matrix as a block matrix and taking full advantage of the relation and symmetry of the eigenfunctions and the block scattering matrix. In addition, the contour $\Gamma$ has self-intersections therefore we have to verify the matching conditions at the intersections involving the high-order triangular matrix $V_{\pm}(k)$.

\subsection{Framework of the proof}
This work is organized as follows.

The aims of Section \ref{s2} are to solve the direct scattering problems associated with the defocusing and focusing $p\times q$ matrix NLS equations. Subsection \ref{s21} reports the properties of eigenfunctions $m^{\pm}$ associated with a $(p+q)\times(p+q)$ matrix spectral problem. The regularity and Lipschitz continuity of the matrix scattering data and reflection coefficient are given in Subsection \ref{s22}.

In Section \ref{s3}, we focus on inverse scattering  results and well-posedness analysis for the defocusing   matrix NLS equation. We first give the  symmetry properties of the eigenfunctions $m^{\pm}$ and the block scattering matrix $S$ in Subsection \ref{s31}. Then we use the properties of matrix scattering date establish a well-defined RH problem  in Subsection \ref{s32}. Subsection \ref{s33} gives the time evolution of the matrix scattering data. To prove the Lipschitz continuity of the inverse scattering map, we consider the solvability of RH problem with in a larger space $H^{\frac{1}{2}+\varepsilon}(\mathbb{R}), \varepsilon>0$ and obtain the uniform resolvent estimates in Subsection \ref{s34}. With the help of Cauchy integral operator $\mathcal{C}_{\mathbb{R}}^{\pm}$, the decay and smoothness properties of reconstructed potential are presented in Subsection \ref{s35}. The proof of Theorem \ref{thm} for defocusing matrix NLS equation are given in Subsection \ref{s36}.

Section \ref{s4} reports the inverse scattering results for the focusing  matrix NLS equation by using a similar way as Section \ref{s3}. In order to overcome the difficulties posed by poles and spectral singularities, we use the properties of scattering date shown in Subsection \ref{s41} to construct an   RH problem  and calculate the jump matrices along the self-intersecting contour $\Gamma$ (see Figure \ref{f1}) in Subsection \ref{s42}.
Then we verify the matching conditions at the intersections involving the high-order triangular matrix $V_{\pm}(k)$. The time evolution of the matrix scattering data and the solvability of RH problem with self-intersecting contours in a larger space $H^{\frac{1}{2}+\varepsilon}(\Gamma), \varepsilon>0$ are presented in Subsections \ref{s43} and \ref{s44}. In Subsection \ref{s45}, we define the new contour $\widehat{\Gamma}$ (see Figure \ref{f3}) and give the estimates of the reconstructed potential using the Cauchy integral operator $\mathcal{C}_{\widehat{\Gamma}}^{\pm}$. Finally, the proof of Theorem \ref{thm} for focusing matrix NLS equation are given in Subsection \ref{s46}.

\subsection{Notations}
Let us now recall the definition of the weighted Sobolev space mentioned above. The weighted Sobolev space $H^{k,j}(\mathbb{R})$ defined by
\begin{equation}\label{sob}
H^{k,j}(\mathbb{R})=H^{k}(\mathbb{R})\cap L^{2,j}(\mathbb{R}),
\end{equation}
where $H^{k}(\mathbb{R})$ denotes the usual Sobolev space with the norm
\begin{equation*}
\|f\|_{H^{k}}:=\|\langle \cdot\rangle^{k} \hat{f}\|_{L^{2}}, \quad \langle \cdot \rangle=\sqrt{1+|\cdot|^2},
\end{equation*}
here $\hat{f}$ is the Fourier transform of $f$,
and $L^{2,j}(\mathbb{R})$ is the weighted $L^{2}(\mathbb{R})$ space with the norm
\begin{equation*}
\|f\|_{L^{2,j}}:=\|\langle \cdot\rangle^{j} f\|_{L^{2}}.
\end{equation*}
The $L^{p}$ matrix norm of the matrix function $U$ are denoted as
\begin{equation*}
\|U\|_{L^{p}}:=\left\| \left|U\right| \right\|_{L^{p}},
\end{equation*}
where $|U|=\sqrt{\operatorname{tr}(U^{\dag}U)}$.

Next, we describe the definition of $H^{k}(\partial\Omega)$ introduced by Beals and Coifman \cite{Beals}, which also can be found in \cite{Zhou1989b}.
\begin{enumerate}[label=(\roman*)]
  \item If $\Omega$ is a connected open region and $\partial\Omega$ is the nonsmooth boundary of $\Omega$, then $f\in H^{k}(\partial\Omega)$ means that $f$ is $ H^{k}$ on each smooth piece and $f$ matches from two sides to the order $k-1$ at each nonsmooth point.
  \item If $\Omega$ is a disconnected region, then $f\in H^{k}(\partial\Omega)$ implies that $f\in H^{k}(\partial\Omega_{i})$ for every connected component $\Omega_{i}$ of $\Omega$.
  \item The weighted Sobolev space $H^{k,j}(\partial\Omega)$ is defined by
      \begin{equation}\label{space}
      H^{k,j}(\partial\Omega)=H^{k}(\partial\Omega)\cap L^{2,j}(\partial\Omega).
      \end{equation}
      The corresponding Sobolev norms are defined in the obvious way.
\end{enumerate}

\section{Direct Scattering Map}\label{s2}
In this section, we state some main results on the direct scattering transform associated with the Cauchy problem \eqref{mnls}.

\subsection{$(p+q)\times(p+q)$ matrix spectral problem}\label{s21}
Let us rewrite the system \eqref{2.1} in the following equivalent form
\begin{equation}\label{2.3}
m_{x}=-ik\operatorname{ad}\sigma_{3}(m)+Um,
\end{equation}
where
\begin{equation*}
m(x,k)=\psi(x,k)e^{ixk\sigma_{3}},
\end{equation*}
and $\operatorname{ad}\sigma_{3}(m)=[\sigma_{3},m]=\sigma_{3}m-m\sigma_{3}$.
This system can be further rewritten in the form of an integral equation
\begin{align}\label{2.4}
m(x,k)=e^{-ixk\operatorname{ad}\sigma_{3}}\beta(k)+\int_{\delta}^{x}e^{i(y-x)k\operatorname{ad}\sigma_{3}}U(y)m(y,k)dy,
\end{align}
where $e^{-ixk\operatorname{ad}\sigma_{3}}\beta(k)=e^{-ixk\sigma_{3}}\beta(k) e^{ixk\sigma_{3}}$. The matrix function $\beta(k)$ is independent of $x$, and the lower limit $\delta$ can be chosen differently for the different entries of the matrix.

If $\boldsymbol{Q}\in L^{1}(\mathbb{R})$, then for every $k\in\mathbb{R}$, there exist unique bounded solutions $m^{\pm}(x,k)$ of \eqref{2.3} satisfying the boundary conditions
\begin{equation}\label{mi}
m^{\pm}(x,k)\rightarrow I_{p+q},\quad x\rightarrow\pm\infty,
\end{equation}
here $I_{p+q}$ is the $(p+q)\times(p+q)$ identity matrix. Moreover, the bounded solutions $m^{\pm}(x,k)$ can be expressed by the Volterra integral equations
\begin{equation}\label{2.5}
m^{\pm}(x,k)=I_{p+q}+\int_{\pm\infty}^{x}e^{i(y-x)k\operatorname{ad}\sigma_{3}}U(y)m^{\pm}(y,k)dy,
\end{equation}
and satisfy $\sup_{k\in\mathbb{R}}\|m^{\pm}(x,k)\|_{L_{x}^{\infty}(\mathbb{R})}<\infty$.
It is more necessary to write the modified eigenfunctions $m^{\pm}$ in the form of the block matrix, so we set
\begin{equation*}
m^{\pm}=\begin{bmatrix}m_{11}^{\pm}&m_{12}^{\pm}\\m_{21}^{\pm}&m_{22}^{\pm}\end{bmatrix},
\end{equation*}
where $m_{11}^{\pm}$, $m_{12}^{\pm}$, $m_{21}^{\pm}$ and $m_{22}^{\pm}$ are the $p\times p$, $p\times q$, $q\times p$ and $q\times q$ matrices, respectively. All $(p+q)\times(p+q)$ matrices in this work are written as $2\times2$ block matrices in this way. In addition, we denote
\begin{equation*}
\begin{bmatrix}m_{11}^{\pm}\\m_{21}^{\pm}\end{bmatrix}= m_{1}^{\pm}\quad \text{and}\quad \begin{bmatrix}m_{12}^{\pm}\\m_{22}^{\pm}\end{bmatrix}= m_{2}^{\pm},
\end{equation*}
that is, $m_{1}^{\pm}$ and $m_{2}^{\pm}$ are $(p+q)\times p$ and $(p+q)\times q$ matrices, respectively. For all $x\in\mathbb{R}$, $m_{1}^{-}(x,k)$ and $m_{2}^{+}(x,k)$ are analytic for $\text{Im}k\geq0$, while $m_{2}^{-}(x,k)$ and $m_{1}^{+}(x,k)$ are analytic for $\text{Im}k\leq0$. The above result for $m^{\pm}$ can be derived by constructing the Neumann series.

To explore more properties of $m^{\pm}$, we define the integral operator $K$ by
\begin{equation}\label{2+kk}
(Kf)(x,k):=\int_{-\infty}^{x}e^{i(y-x)k\operatorname{ad}\sigma_{3}}U(y)f(y,k)dy,
\end{equation}
then the integral equation \eqref{2.5} for $m^{-}$ can be rewritten
in the operator form
\begin{equation}\label{2+mo}
m^{-}(x,k)=I_{p+q}+Km^{-}(x,k),
\end{equation}
furthermore, we have
\begin{equation}\label{2+mk}
m^{-}(x,k)-I_{p+q}=(I-K)^{-1}KI_{p+q}(x,k).
\end{equation}
The existence of the inverse operator $(I-K)^{-1}$ is guaranteed by the following Lemma \ref{lk}.
\begin{lemma}\label{lk}
If $\boldsymbol{Q}\in L^{1}(\mathbb{R})$, then the integral operator $I-K$ is invertible in the space $L_{x}^{\infty}(\mathbb{R}, L_{k}^{2}(\mathbb{R}))$ and satisfies the following estimate
\begin{align}\label{2+k}
\left\|(I-K)^{-1}\right\|_{L_x^{\infty}L_{k}^{2}\rightarrow L_x^{\infty}L_{k}^{2}}\leq e^{\|U\|_{L^1}}.
\end{align}
Moreover, if $\boldsymbol{Q}\in L^{2}(\mathbb{R})$, then $(KI_{p+q})(x,k)\in L_{x}^{\infty}(\mathbb{R}, L_{k}^{2}(\mathbb{R}))$ and satisfies
\begin{align}\label{2+pro1}
\left\|(KI_{p+q})(x,k)\right\|_{L_{x}^{\infty}L_{k}^{2}}=\left\|\int_{-\infty}^{x}e^{i(y-x)k\operatorname{ad}\sigma_{3}}U(y)dy\right\|_{L_{x}^{\infty}L_{k}^{2}}\lesssim\left\|U\right\|_{L^{2}}.
\end{align}
These two estimates also hold in the space $L_{x}^{\infty}(\mathbb{R}^{-}, L_{k}^{2}(\mathbb{R}))$.
\end{lemma}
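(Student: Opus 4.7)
My plan is to combine a standard Neumann series argument (for the first estimate) with Plancherel's theorem (for the second). The crucial opening observation is that for real $k$ the matrix $e^{ik\sigma_{3}}$ is unitary, so the map $A\mapsto e^{i(y-x)k\operatorname{ad}\sigma_{3}}A=e^{i(y-x)k\sigma_{3}}Ae^{-i(y-x)k\sigma_{3}}$ preserves the Frobenius norm: $|e^{i(y-x)k\operatorname{ad}\sigma_{3}}A|=|A|$. Combined with submultiplicativity $|UA|\leq|U|\,|A|$ of the Frobenius norm, this gives the pointwise bound $|(Kf)(x,k)|\leq\int_{-\infty}^{x}|U(y)|\,|f(y,k)|\,dy$, after which Minkowski's integral inequality in the $L^{2}_{k}$ variable yields
\begin{equation*}
\|(Kf)(x,\cdot)\|_{L^{2}_{k}}\leq\int_{-\infty}^{x}|U(y)|\,\|f(y,\cdot)\|_{L^{2}_{k}}\,dy.
\end{equation*}

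Iterating this inequality $n$ times and invoking the elementary identity
\begin{equation*}
\int_{-\infty}^{x}|U(y_{1})|\int_{-\infty}^{y_{1}}|U(y_{2})|\cdots\int_{-\infty}^{y_{n-1}}|U(y_{n})|\,dy_{n}\cdots dy_{1}=\frac{1}{n!}\left(\int_{-\infty}^{x}|U(y)|\,dy\right)^{n}
\end{equation*}
produces $\|K^{n}\|_{L_{x}^{\infty}L_{k}^{2}\to L_{x}^{\infty}L_{k}^{2}}\leq\|U\|_{L^{1}}^{n}/n!$; summing the Neumann series $(I-K)^{-1}=\sum_{n\geq 0}K^{n}$ then yields \eqref{2+k} and the invertibility of $I-K$. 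For the second claim I would exploit the off-diagonal block structure of $U$: since the diagonal blocks of $U$ vanish, one computes
\begin{equation*}
e^{i(y-x)k\operatorname{ad}\sigma_{3}}U(y)=\begin{bmatrix}0_{p\times p}&e^{2i(y-x)k}\boldsymbol{Q}(y)\\\sigma e^{-2i(y-x)k}\boldsymbol{Q}^{\dag}(y)&0_{q\times q}\end{bmatrix},
\end{equation*}
so $(KI_{p+q})(x,k)$ is itself off-diagonal with blocks of the form $e^{\mp 2ixk}\int_{-\infty}^{x}e^{\pm 2iyk}\boldsymbol{Q}(y)\,dy$ (and its Hermitian conjugate up to sign). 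These are Fourier transforms of the truncations $\chi_{(-\infty,x]}(y)\boldsymbol{Q}(y)$, so applying Plancherel's theorem entrywise gives
\begin{equation*}
\|(KI_{p+q})(x,\cdot)\|_{L^{2}_{k}}\lesssim\|\chi_{(-\infty,x]}\boldsymbol{Q}\|_{L^{2}}\leq\|\boldsymbol{Q}\|_{L^{2}}=\|U\|_{L^{2}},
\end{equation*}
uniformly in $x$, which is \eqref{2+pro1}. The variant on $L^{\infty}_{x}(\mathbb{R}^{-},L^{2}_{k})$ follows by restricting the same argument to $x<0$.

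The main technical point to watch is the validity of the matrix-valued Minkowski step; this reduces cleanly to the scalar Minkowski integral inequality applied to the nonnegative function $|U(y)|\,|f(y,k)|$, together with submultiplicativity of the Frobenius norm. The only other bookkeeping issue is the absolute Plancherel constant associated with the $e^{\pm 2iyk}$ normalization, which is harmless for the $\lesssim\|U\|_{L^{2}}$ estimate. I do not foresee any deeper obstruction, the proof being essentially identical in structure to the scalar case except for careful handling of block matrix norms.
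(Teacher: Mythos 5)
Your proposal is correct and follows essentially the same route as the paper: a Minkowski-plus-iteration (Neumann series) argument giving $\|K^{n}\|\leq\|U\|_{L^{1}}^{n}/n!$ for the resolvent bound, and Plancherel applied to the explicit phase structure of $e^{i(y-x)k\operatorname{ad}\sigma_{3}}U(y)$ for the second estimate; your extra care about unitarity of the conjugation and the block structure only makes explicit what the paper leaves implicit. The only cosmetic slip is writing $\|\boldsymbol{Q}\|_{L^{2}}=\|U\|_{L^{2}}$ (they differ by a factor $\sqrt{2}$ since $|U|^{2}=2|\boldsymbol{Q}|^{2}$), which is harmless under the stated $\lesssim$.
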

\begin{proof}
Let $f(x,k)$ be a matrix function belonging to the space  $L_{x}^{\infty}(\mathbb{R};L_{k}^{2}(\mathbb{R}))$ with the norm
\begin{align*}
\left\|f(x,k)\right\|_{L_{x}^{\infty}L_{k}^{2}}=\sup_{x\in\mathbb{R}}\left\|f(x,k)\right\|_{L_{k}^{2}(\mathbb{R})},
\end{align*}
for every $x\in \mathbb{R}$, an application of Minkowski inequality gives
\begin{align*}
\left\|(Kf)(x,k)\right\|_{L_{k}^{2}}&\leq\int_{-\infty}^{x}|U(y)|\left\|f(y,k)\right\|_{L_{k}^{2}}dy\\
&\leq\sup_{y\in(-\infty,x)}\left\|f(y,k)\right\|_{L_{k}^{2}}\int_{-\infty}^{x}|U(y)|dy.
\end{align*}
By iteration we can further obtain
\begin{align*}
\left\|(K^{n}f)(x,k)\right\|_{L_{k}^{2}}\leq\frac{\left\|U\right\|_{L^{1}}^{n}}{n!}\left\|f(x,k)\right\|_{L_{x}^{\infty}L_{k}^{2}},
\end{align*}
which implies that the operator $I-K$ is invertible in the space $L_{x}^{\infty}(\mathbb{R}, L_{k}^{2}(\mathbb{R}))$ and \eqref{2+k} holds.

It follows from Plancherel formula that $\forall$ $x\in\mathbb{R}$,
\begin{align}\label{2+pro1p}
\begin{aligned}
\left\|(KI_{p+q})(x,k)\right\|_{L_{k}^{2}}=&\left\|\int_{-\infty}^{x}e^{i(y-x)k\operatorname{ad}\sigma_{3}}U(y)dy\right\|_{L_{k}^{2}}\\
\lesssim&\left(\int_{-\infty}^{x}|U(y)|^{2}dy\right)^{\frac{1}{2}}
\leq\left\|U\right\|_{L^{2}},
\end{aligned}
\end{align}
thus, the estimate \eqref{2+pro1} holds. The proof of \eqref{2+pro1} can also be found in \cite{Peli}.
\end{proof}

Making full use of the properties of the integral operator $K$ and equality \eqref{2+mk}, we can obtain some significant properties of $m^{\pm}(x,k)-I_{p+q}$.
\begin{proposition}\label{lm}
If $\boldsymbol{Q}\in H^{1,1}(\mathbb{R})$, then $m^{\pm}(x,k)-I_{p+q}\in L_{x}^{\infty}(\mathbb{R}^{\pm}, H_{k}^{1}(\mathbb{R}))$ and the map $\boldsymbol{Q}\mapsto [m^{\pm}-I_{p+q}]$ is Lipschitz continuous.
\end{proposition}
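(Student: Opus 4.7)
The plan is to upgrade the $L^\infty_x L^2_k$ bound of $m^\pm - I_{p+q}$ provided by Lemma~\ref{lk} to an $L^\infty_x H^1_k$ bound by differentiating the Volterra equation \eqref{2.5} in the spectral variable $k$, and then to obtain Lipschitz continuity via a second-resolvent identity. I focus on $m^-$ over $\mathbb{R}^-$; the case of $m^+$ on $\mathbb{R}^+$ is handled identically after reflection. The $L^2_k$ piece is immediate: \eqref{2+mk} together with Lemma~\ref{lk} gives $\|m^- - I_{p+q}\|_{L^\infty_x(\mathbb{R}^-) L^2_k}\lesssim e^{\|U\|_{L^1}}\|U\|_{L^2}$.

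Formal differentiation of \eqref{2.5} in $k$ produces
\begin{equation*}
\partial_k m^- = (I-K)^{-1}\widetilde{K} m^-,\quad (\widetilde{K}f)(x,k) := \int_{-\infty}^{x} i(y-x)\,\operatorname{ad}\sigma_3\bigl(e^{i(y-x)k\operatorname{ad}\sigma_3} U(y)\bigr) f(y,k)\,dy,
\end{equation*}
so Lemma~\ref{lk} reduces the task to bounding $\widetilde{K}m^-$ in $L^\infty_x(\mathbb{R}^-) L^2_k$. The decisive observation is the elementary inequality $|y-x|=x-y\leq -y=|y|$, valid exactly on the Volterra domain $\{y<x\leq 0\}$, which converts the spatial weight brought down by $\partial_k$ into the $L^{2,1}$-weight hypothesized on $U$. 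Since $\operatorname{ad}\sigma_3$ preserves the off-diagonal block structure and $e^{i(y-x)k\operatorname{ad}\sigma_3}$ acts there as $e^{\pm 2ik(y-x)}$, the same Plancherel argument as in \eqref{2+pro1p} yields
\begin{equation*}
\|\widetilde{K} I_{p+q}\|_{L^\infty_x(\mathbb{R}^-)L^2_k}\lesssim \bigl\|(y-x)U(y)\mathbf{1}_{y<x}\bigr\|_{L^\infty_x L^2_y}\leq \||y|\,U(y)\|_{L^2_y}=\|U\|_{L^{2,1}}.
\end{equation*}
The higher-order Neumann terms $\widetilde{K} K^n I_{p+q}$ are treated in the same spirit: on the nested domain $y_{n+1}<\cdots<y_1<x\leq 0$ the monotonicity $|y_1-x|\leq|y_{n+1}|$ transfers the outermost weight onto the innermost integration variable, contributing $\|U\|_{L^{2,1}}$ via Plancherel, while the remaining $n$ nested integrations contribute $\|U\|_{L^1}^n/n!$ by the Volterra estimate of Lemma~\ref{lk}. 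Summing the series gives $\|\widetilde{K} m^-\|_{L^\infty_x(\mathbb{R}^-) L^2_k}\lesssim e^{\|U\|_{L^1}}\|U\|_{L^{2,1}}$, and hence $m^- - I_{p+q}\in L^\infty_x(\mathbb{R}^-, H^1_k(\mathbb{R}))$.

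Lipschitz continuity follows from the second resolvent identity
\begin{equation*}
m_1^- - m_2^- = (I-K_1)^{-1}(K_1-K_2)\,m_2^-
\end{equation*}
(subscripts denoting the two potentials) together with the bound $\|(K_1-K_2)f\|_{L^\infty_x L^2_k}\lesssim \|U_1-U_2\|_{L^2}\|f\|_{L^\infty_{x,k}}$, which is a direct consequence of Plancherel and the uniform $L^\infty_{x,k}$-boundedness of $m_2^-$ guaranteed by the Neumann series. This controls $\|m_1^--m_2^-\|_{L^\infty_x L^2_k}$ by a constant multiple of $\|U_1-U_2\|_{L^2}$. Differentiating the identity in $k$ and invoking the telescoping $\widetilde{K}_1-\widetilde{K}_2=\widetilde{K}_{U_1-U_2}$ together with the weighted estimate established above yields $\|\partial_k(m_1^--m_2^-)\|_{L^\infty_x L^2_k}\lesssim\|U_1-U_2\|_{L^{2,1}}$, completing the Lipschitz statement for $\boldsymbol{Q}\mapsto m^--I_{p+q}$ from $H^{1,1}(\mathbb{R})$ into $L^\infty_x(\mathbb{R}^-, H^1_k(\mathbb{R}))$.

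The main technical obstacle is the weighted Plancherel estimate for the Neumann iteration of $\widetilde{K} m^-$: one must simultaneously exploit the nested Volterra domain and the monotonicity of $|y_i - x|$ to migrate the spatial weight produced by $\partial_k$ onto the innermost integration variable, where it pairs with $U\in L^{2,1}$. Everything else is block-matrix bookkeeping, since $\operatorname{ad}\sigma_3$ respects the $(p+q)\times(p+q)$ block decomposition of $U$ and the oscillatory phases on the off-diagonal blocks are precisely those arising in Zhou's $2\times 2$ analysis~\cite{Zhou1998}.
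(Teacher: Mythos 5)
Your overall skeleton coincides with the paper's: differentiate the Volterra equation so that $(I-K)\partial_{k}m^{-}=(\partial_{k}K)m^{-}$ (the paper's \eqref{ppp}), exploit $|y-x|\leq|y|$ on the Volterra domain $y<x\leq0$ to convert the weight produced by $\partial_{k}$ into the $L^{2,1}$ weight on $U$, and control the resolvent by Lemma \ref{lk}. The genuinely different part is how you treat the term involving $m^{-}-I_{p+q}$. The paper first proves the auxiliary mixed-norm estimate $m^{-}-I_{p+q}\in L_{x}^{2}(\mathbb{R}^{-};L_{k}^{2}(\mathbb{R}))$ (splitting off $KI_{p+q}$ and using Hardy's inequality for $K(m^{-}-I_{p+q})$) and then bounds $(\partial_{k}K)(m^{-}-I_{p+q})$ by Cauchy--Schwarz in $y$, pairing $|y|\,U(y)\in L^{2}$ against $\|m^{-}(y,\cdot)-I_{p+q}\|_{L_{k}^{2}}\in L_{y}^{2}$. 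You instead expand $m^{-}$ in its Neumann series and estimate each $\widetilde{K}K^{n}I_{p+q}$ by transferring the weight to the innermost variable ($|y_{1}-x|\leq|y_{n+1}|$), applying Plancherel there, and using the simplex bound $\|U\|_{L^{1}}^{n}/n!$ for the outer integrations. This route is viable and buys you an argument free of Hardy's inequality and of the $L_{x}^{2}L_{k}^{2}$ step, but it silently relies on a structural fact you must state and prove: in every nonzero entry of the iterated integrand the innermost variable appears with a nonvanishing phase $e^{\pm2iky_{n+1}}$. This is true precisely because $U$ is block off-diagonal, so the innermost factor $e^{i(y_{n+1}-y_{n})k\operatorname{ad}\sigma_{3}}U(y_{n+1})$ has only oscillating blocks and the later multiplications introduce no further $y_{n+1}$-dependence; without this observation, Plancherel in $y_{n+1}$ simply fails for non-oscillating entries, and the whole term-by-term scheme collapses. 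An induction on the block structure (as in the $n=2$ computation) closes this.

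There is, however, one genuinely false auxiliary claim in your Lipschitz step: the inequality $\|(K_{1}-K_{2})f\|_{L_{x}^{\infty}L_{k}^{2}}\lesssim\|U_{1}-U_{2}\|_{L^{2}}\|f\|_{L_{x,k}^{\infty}}$ cannot hold. The diagonal blocks of $e^{i(y-x)k\operatorname{ad}\sigma_{3}}\bigl((U_{1}-U_{2})(y)f(y,k)\bigr)$ carry no oscillatory factor, so taking for $f$ a constant matrix with nonzero off-diagonal blocks produces an output whose diagonal blocks are independent of $k$ and hence not in $L_{k}^{2}(\mathbb{R})$; for $f=m_{2}^{-}$ the needed decay in $k$ must come from $m_{2}^{-}-I_{p+q}$, not from boundedness. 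The repair is exactly the paper's computation: write $m_{2}^{-}=I_{p+q}+(m_{2}^{-}-I_{p+q})$, estimate $(K_{1}-K_{2})I_{p+q}$ by Plancherel as in \eqref{2+pro1} (legitimate, since $U_{1}-U_{2}$ is block off-diagonal), and bound $(K_{1}-K_{2})(m_{2}^{-}-I_{p+q})$ by Minkowski with $\|U_{1}-U_{2}\|_{L^{1}}\|m_{2}^{-}-I_{p+q}\|_{L_{x}^{\infty}L_{k}^{2}}$, using $L^{2,1}(\mathbb{R})\subset L^{1}(\mathbb{R})$. With this fix, and with the corresponding care when you differentiate the resolvent identity in $k$ (which generates the additional terms the paper writes out explicitly, each requiring the weighted or unweighted bounds already established), your argument does deliver both the $L_{x}^{\infty}H_{k}^{1}$ membership and the Lipschitz continuity.
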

\begin{proof}
The proof is given in terms of $m^{-}(x,k)$. First, we proof that $m^{-}(x,k)-I_{p+q}\in L_{x}^{\infty}(\mathbb{R}^{-}, L_{k}^{2}(\mathbb{R}))$. According to \eqref{2+mk} and the result of Lemma \ref{lk}, we have
\begin{equation}\label{2+m-i}
\begin{aligned}
\left\|m^{-}(x,k)-I_{p+q}\right\|_{L_{x}^{\infty}L_{k}^{2}}=&\left\|(I-K)^{-1}KI_{p+q}(x,k)\right\|_{L_{x}^{\infty}L_{k}^{2}}\\
\leq&\left\|(KI_{p+q})(x,k)\right\|_{L_{x}^{\infty}L_{k}^{2}}e^{\left\|U\right\|_{L^{1}}}\\
\lesssim & \left\|U\right\|_{L^{2}}e^{\left\|U\right\|_{L^{1}}}.
\end{aligned}
\end{equation}
Next, we show that $\partial_{k}m^{-}(x,k)\in L_{x}^{\infty}(\mathbb{R}^{-}, L_{k}^{2}(\mathbb{R}))$. It follows from \eqref{2+mo} that
\begin{equation}\label{ppp}
(I-K)(\partial_{k}m^{-})=\partial_{k}(KI_{p+q})+(\partial_{k}K)(m^{-}-I_{p+q}),
\end{equation}
so we only need to prove that
$\partial_{k}(KI_{p+q})$, $(\partial_{k}K)(m^{-}-I_{p+q})\in L_{x}^{\infty}(\mathbb{R}^{-};L_{k}^{2}(\mathbb{R}))$.
Using the fact that $y<x<0$ and a proof similar to \eqref{2+pro1} yields that for $x\in\mathbb{R}^{-}$,
\begin{equation}\label{kki}
\left\|\partial_{k}(KI_{p+q})\right\|_{L_{x}^{\infty}L_{k}^{2}}=\left\|\int_{-\infty}^{x}e^{i(y-x)k\operatorname{ad}\sigma_{3}}i\operatorname{ad}\sigma_{3}\left((y-x)U(y)\right)dy\right\|_{L_{x}^{\infty}L_{k}^{2}}\lesssim \left\|U\right\|_{L^{2,1}}.
\end{equation}
For $(\partial_{k}K)(m^{-}-I_{p+q})$, we first proof that $m^{-}-I_{p+q}\in L_{x}^{2}(\mathbb{R}^{-};L_{k}^{2}(\mathbb{R}))$.
Using the operator equation \eqref{2+mo}, we can obtain that
\begin{align*}
\|m^{-}-I_{p+q}\|_{L_{x}^{2}L_{k}^{2}}\leq \|KI_{p+q}\|_{L_{x}^{2}L_{k}^{2}}+\|K(m^{-}-I_{p+q})\|_{L_{x}^{2}L_{k}^{2}}.
\end{align*}
By using the \eqref{2+pro1p} and  integration by parts, we obtain for every $x\in \mathbb{R}^{-}$,
\begin{align*}
\left\|KI_{p+q}\right\|_{L_{x}^{2}L_{k}^{2}}\lesssim&\left(\int_{-\infty}^{0}\int_{-\infty}^{x}|U(y)|^{2}dydx\right)^{\frac{1}{2}}\\
=&\left(\int_{-\infty}^{0}\left|x\right|\left|U(x)\right|^{2}dx\right)^{\frac{1}{2}}\\
\leq&\left\|U\right\|_{L_{x}^{2,1}}^{\frac{1}{2}}\left\|U\right\|_{L_{x}^{2}}^{\frac{1}{2}}.
\end{align*}
It follows from  the Minkowski inequality that
\begin{align*}
\left\|K(m^{-}-I_{p+q})\right\|_{L_{k}^{2}}=&\left\|\int_{-\infty}^{x}e^{i(y-x)k\operatorname{ad}\sigma_{3}}U(y)\left(m^{-}(y,k)-I_{p+q}\right)dy\right\|_{L_{k}^{2}}\\
\leq&\int_{-\infty}^{x}\left|U(y)\right|dy\left\|m^{-}(x,k)-I_{p+q}\right\|_{L_{x}^{\infty}L_{k}^{2}}.
\end{align*}
By the estimate \eqref{2+m-i} and Hardy's inequality, we have
\begin{align*}
\left\|K(m^{-}-I_{p+q})\right\|_{L_{x}^{2}L_{k}^{2}}\lesssim&\left(\int_{-\infty}^{0}\left(\int_{-\infty}^{x}|U(y)|dy\right)^{2}dx\right)^{\frac{1}{2}}\\
\lesssim&\left(\int_{-\infty}^{0}|U(x)|^{2}x^{2}dx\right)^{\frac{1}{2}}\\
\leq&\left\|U\right\|_{L^{2,1}}.
\end{align*}
Hence, we deduce that $m^{-}-I_{p+q}\in L_{x}^{2}(\mathbb{R}^{-};L_{k}^{2}(\mathbb{R}))$. Then,
we can use the Minkowski inequality, H\"{o}lder inequality and the fact $|x-y|<y$ to estimate
\begin{equation}\label{kkm}
\begin{aligned}
&\left\|(\partial_{k}K)(m^{-}-I_{p+q})\right\|_{L_{k}^{2}}\\
=&\left\|\int_{-\infty}^{x}e^{i(y-x)k\operatorname{ad}\sigma_{3}}i\operatorname{ad}\sigma_{3}\left((y-x)U(y)(m^{-}(y)-I_{p+q})\right)dy\right\|_{L_{k}^{2}}\\
\lesssim&\int_{-\infty}^{x}\left|yU(y)\right|\left\|m^{-}(y,k)-I_{p+q}\right\|_{L_{k}^{2}}dy\\
\leq&\left\|U\right\|_{L^{2,1}}\left\|m^{-}(x,k)-I_{p+q}\right\|_{L_{x}^{2}(\mathbb{R}^{-};L_{k}^{2}(\mathbb{R}))}.
\end{aligned}
\end{equation}
So we have $(\partial_{k}K)(m^{-}-I_{p+q})\in L_{x}^{\infty}(\mathbb{R}^{-};L_{k}^{2}(\mathbb{R}))$.

Next, we proof the Lipschitz continuous of $m^{-}-I_{p+q}$ with respect to $\boldsymbol{Q}$. Suppose that $\boldsymbol{Q}, \tilde{\boldsymbol{Q}} \in H^{1,1}(\mathbb{R})$ satisfy $\|\boldsymbol{Q}\|_{H^{1,1}(\mathbb{R})}, \|\tilde{\boldsymbol{Q}}\|_{H^{1,1}(\mathbb{R})}\leq \gamma$ for some $\gamma>0$. Denote the corresponding Jost functions by $m_{-}$ and $\tilde{m}_{-}$ respectively. From \eqref{2+mk}, we have
\begin{equation*}
\begin{aligned}
m_{-}-\tilde{m}_{-}&=(I-K)^{-1}KI_{p+q}-(I-\tilde{K})^{-1}\tilde{K}I_{p+q}\\
&=(I-K)^{-1}(K-\tilde{K})I_{p+q}+\left((I-K)^{-1}-(I-\tilde{K})^{-1}\right)\tilde{K}I_{p+q}\\
&=(I-K)^{-1}(K-\tilde{K})I_{p+q}+(I-K)^{-1}(K-\tilde{K})(I-\tilde{K})^{-1}\tilde{K}I_{p+q},
\end{aligned}
\end{equation*}
where $\tilde{K}$ can be defined by $\tilde{\boldsymbol{Q}}$ in the same way as  \eqref{2+kk}. Making full use of the definition \eqref{2+kk} as well as the results of Lemma \ref{lk} we have
\begin{equation*}
\left\|(I-K)^{-1}(K-\tilde{K})I_{p+q}\right\|_{L_{x}^{\infty}L_{k}^{2}}\leq c(\gamma)\left\|(K-\tilde{K})I_{p+q}\right\|_{L_{x}^{\infty}L_{k}^{2}}
\leq c(\gamma)\|\boldsymbol{Q}-\tilde{\boldsymbol{Q}}\|_{H^{1,1}}
\end{equation*}
and
\begin{equation}\label{kkk}
\begin{aligned}
&\left\|(I-K)^{-1}(K-\tilde{K})(I-\tilde{K})^{-1}\tilde{K}I_{p+q}\right\|_{L_{x}^{\infty}L_{k}^{2}}\\
\leq& c(\gamma)\left\|(K-\tilde{K})(I-\tilde{K})^{-1}\tilde{K}I_{p+q}\right\|_{L_{x}^{\infty}L_{k}^{2}}\\
\leq& c(\gamma)\|\boldsymbol{Q}-\tilde{\boldsymbol{Q}}\|_{H^{1,1}}\left\|(I-\tilde{K})^{-1}\tilde{K}I_{p+q}\right\|_{L_{x}^{\infty}L_{k}^{2}}\\
\leq& c(\gamma)\|\boldsymbol{Q}-\tilde{\boldsymbol{Q}}\|_{H^{1,1}}.
\end{aligned}
\end{equation}
Then we obtain
\begin{equation}\label{mmm}
\left\|m_{-}-\tilde{m}_{-}\right\|_{L_{x}^{\infty}L_{k}^{2}}
\leq c(\gamma)\|\boldsymbol{Q}-\tilde{\boldsymbol{Q}}\|_{H^{1,1}}.
\end{equation}
Using the equality \eqref{ppp}, we have
\begin{equation}\label{km}
\begin{aligned}
&\partial_{k}m_{-}-\partial_{k}\tilde{m}_{-}\\
=&(I-K)^{-1}\left(\partial_{k}(KI_{p+q}-\tilde{K}I_{p+q})\right)+(I-K)^{-1}\left((\partial_{k}K)(m^{-}-\tilde{m}^{-})\right)\\
&+(I-K)^{-1}\left((\partial_{k}K-\partial_{k}\tilde{K})(\tilde{m}^{-}-I_{p+q})\right)\\
&-(I-K)^{-1}(K-\tilde{K})(I-\tilde{K})^{-1}\left(\partial_{k}(\tilde{K}I_{p+q})+(\partial_{k}\tilde{K})(\tilde{m}^{-}-I_{p+q})\right).
\end{aligned}\nonumber
\end{equation}
From \eqref{kki}, we obtain
\begin{equation*}
\|\partial_{k}(KI_{p+q}-\tilde{K}I_{p+q})\|_{L_{x}^{\infty}L_{k}^{2}}\leq c(\gamma)\|\boldsymbol{Q}-\tilde{\boldsymbol{Q}}\|_{H^{1,1}}.
\end{equation*}
By using \eqref{kki}-\eqref{mmm} and repeating the same analysis to \eqref{km}, we have
\begin{equation*}
\left\|\partial_{k}m_{-}-\partial_{k}\tilde{m}_{-}\right\|_{L_{x}^{\infty}L_{k}^{2}}
\leq c(\gamma)\|\boldsymbol{Q}-\tilde{\boldsymbol{Q}}\|_{H^{1,1}}.
\end{equation*}
This completes the proof of Proposition \ref{lm} for $m^{-}(x,k)$.
\end{proof}

\subsection{Regularity of the matrix scattering data}\label{s22}
By the uniqueness theory of ODE and $\det m^{\pm}=1$, we can define the scattering matrix associated with the spectral problem \eqref{2.3} as follows
\begin{equation}\label{2+abcd}
m^{-}(x,k)=m^{+}(x,k)e^{-ixk\operatorname{ad}\sigma_{3}}S(k),\quad S(k)=\begin{bmatrix}\boldsymbol{A}(k)&\boldsymbol{B}(k)\\\boldsymbol{C}(k)&\boldsymbol{D}(k)\end{bmatrix},\quad k\in\mathbb{R},
\end{equation}
where scattering date $\boldsymbol{A}(k), \boldsymbol{B}(k), \boldsymbol{C}(k)$ and $\boldsymbol{D}(k)$ are the $p\times p$, $p\times q$, $q\times p$ and $q\times q$ matrices, respectively. It follows from the analyticity of the eigenfunctions $m^{\pm}$ that $\boldsymbol{A}(k)$ and $\boldsymbol{D}(k)$ can be analytically extended to the upper and the lower half-planes, respectively. Clearly, $\det S(k)=1$. And $S(k)$ can be expressed in the integral form
\begin{align}\label{2+s}
S(k)=I_{p+q}+\int_{\mathbb{R}}e^{iyk\operatorname{ad}\sigma_{3}}U(y)m^{-}(y,k)dy.
\end{align}
Starting from the integral expression \eqref{2+s} and making full use of the results in Proposition \ref{lm}, we can obtain the regularity of the scattering data as follows.
\begin{proposition}\label{lmab}
If $\boldsymbol{Q}\in H^{1,1}(\mathbb{R})$, then
$\boldsymbol{A}-I_{p}, \boldsymbol{D}-I_{q}\in H^{1}(\mathbb{R})$ and $\boldsymbol{B}, \boldsymbol{C}\in H^{1,1}(\mathbb{R})$. Moreover, the map
\begin{equation}\label{qa}
\boldsymbol{Q}\mapsto \left[\boldsymbol{A}-I_{p}, \boldsymbol{D}-I_{q}, \boldsymbol{B}, \boldsymbol{C}\right]
\end{equation}
is Lipschitz continuous.
\end{proposition}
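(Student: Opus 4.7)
The starting point is the integral representation \eqref{2+s}. Writing $U$ and $m^{-}$ as $2\times 2$ block matrices and using that $e^{iyk\operatorname{ad}\sigma_{3}}$ acts as the identity on diagonal blocks and multiplies off-diagonal blocks by $e^{\pm 2iyk}$, the four block components of $S(k)-I_{p+q}$ come out as
\begin{align*}
\boldsymbol{A}(k)-I_{p} &= \int_{\mathbb{R}} \boldsymbol{Q}(y)\,m_{21}^{-}(y,k)\,dy, & \boldsymbol{B}(k) &= \int_{\mathbb{R}} e^{2iyk}\boldsymbol{Q}(y)\,m_{22}^{-}(y,k)\,dy,\\
\boldsymbol{D}(k)-I_{q} &= \sigma\int_{\mathbb{R}} \boldsymbol{Q}^{\dag}(y)\,m_{12}^{-}(y,k)\,dy, & \boldsymbol{C}(k) &= \sigma\int_{\mathbb{R}} e^{-2iyk}\boldsymbol{Q}^{\dag}(y)\,m_{11}^{-}(y,k)\,dy.
\end{align*}
The entire proof then reduces to reading off the claimed regularity and Lipschitz dependence from these four integrals, invoking Proposition~\ref{lm} at every step.

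\textbf{Off-diagonal blocks.} The factors $e^{\pm 2iyk}$ turn $\boldsymbol{B}$ and $\boldsymbol{C}$ into Fourier-type integrals in $y$. I first split $m_{22}^{-}=I_{q}+(m_{22}^{-}-I_{q})$. The leading part $\int e^{2iyk}\boldsymbol{Q}(y)\,dy$ equals $\widehat{\boldsymbol{Q}}(-2k)$ up to rescaling, and since the Fourier transform is an isomorphism of $H^{1,1}(\mathbb{R})$ this contribution already lies in $H^{1,1}(\mathbb{R})$ with norm controlled by $\|\boldsymbol{Q}\|_{H^{1,1}}$. For the remainder I insert the Volterra identity $m_{22}^{-}-I_{q}=\sigma\int_{-\infty}^{y}\boldsymbol{Q}^{\dag}(z)m_{12}^{-}(z,k)\,dz$, apply Fubini, and use Plancherel on the remaining oscillation in $y$ together with the $L_{x}^{\infty}H_{k}^{1}$ bound on $m^{-}-I_{p+q}$ supplied by Proposition~\ref{lm}. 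The weighted norm $\|\langle k\rangle\boldsymbol{B}\|_{L^{2}}$ is handled by an integration by parts in $y$ that converts the factor $k$ into $\partial_{y}$ acting on $\boldsymbol{Q}(y)m_{22}^{-}(y,k)$, whose two pieces are controlled by $\|\boldsymbol{Q}\|_{H^{1}}$ and, via the spatial ODE \eqref{2.3}, by $\|\boldsymbol{Q}\|_{L^{2}}$. Finally, $\|\partial_{k}\boldsymbol{B}\|_{L^{2}}$ follows by differentiating under the integral: one piece produces $2iy\boldsymbol{Q}(y)\in L^{2}$ since $\boldsymbol{Q}\in L^{2,1}$, the other involves $\partial_{k}m_{22}^{-}\in L_{x}^{\infty}L_{k}^{2}$ which is in Proposition~\ref{lm}. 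The block $\boldsymbol{C}$ is analogous after swapping $\boldsymbol{Q}\leftrightarrow\sigma\boldsymbol{Q}^{\dag}$ and $m_{22}^{-}\leftrightarrow m_{11}^{-}$.

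\textbf{Diagonal blocks and main obstacle.} Since $\boldsymbol{A}-I_{p}$ and $\boldsymbol{D}-I_{q}$ carry no oscillatory kernel, a direct Plancherel argument is unavailable; this is the technical heart of the proposition. I would create the missing oscillation by one extra iteration of the Volterra equation, substituting $m_{21}^{-}(y,k)=\sigma\int_{-\infty}^{y}e^{2i(y-z)k}\boldsymbol{Q}^{\dag}(z)m_{11}^{-}(z,k)\,dz$ into the formula for $\boldsymbol{A}-I_{p}$ and changing variables $w=y-z>0$, to obtain
\[
\boldsymbol{A}(k)-I_{p}=\sigma\int_{0}^{\infty} e^{2iwk}\left[\int_{\mathbb{R}}\boldsymbol{Q}(z+w)\boldsymbol{Q}^{\dag}(z)m_{11}^{-}(z,k)\,dz\right]dw.
\]
Plancherel in $w$ now reduces the $L^{2}_{k}$ estimate to the $L^{2}_{w>0}$ norm of the inner integral, which is controlled via Young's inequality, $\|\boldsymbol{Q}\|_{L^{2}}$, $\|\boldsymbol{Q}\|_{L^{1}}\lesssim \|\boldsymbol{Q}\|_{L^{2,1}}^{1/2}\|\boldsymbol{Q}\|_{L^{2}}^{1/2}$, and the uniform bound on $m_{11}^{-}$. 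For the $\partial_{k}$ estimate, the same identity is differentiated: the extra factor $w=y-z$ is absorbed into $\|\boldsymbol{Q}\|_{L^{2,1}}$ because $|w|\le|z|+|z+w|$, and any $\partial_{k}m_{11}^{-}$ generated is bounded by the $H^{1}_{k}$ part of Proposition~\ref{lm}. The block $\boldsymbol{D}-I_{q}$ is treated identically.

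\textbf{Lipschitz continuity.} The four integrals above are multilinear in the pair $(\boldsymbol{Q},m^{-})$, so for $\boldsymbol{Q},\tilde{\boldsymbol{Q}}\in H^{1,1}(\mathbb{R})$ with $\|\boldsymbol{Q}\|_{H^{1,1}},\|\tilde{\boldsymbol{Q}}\|_{H^{1,1}}\le\gamma$, the difference of scattering data splits into pieces in which either one factor of $\boldsymbol{Q}$ is replaced by $\boldsymbol{Q}-\tilde{\boldsymbol{Q}}$ or $m^{-}$ by $m^{-}-\tilde{m}^{-}$. The former is directly controlled by $\|\boldsymbol{Q}-\tilde{\boldsymbol{Q}}\|_{H^{1,1}}$; the latter uses the Lipschitz estimate $\|m^{-}-\tilde{m}^{-}\|_{L_{x}^{\infty}H_{k}^{1}}\le c(\gamma)\|\boldsymbol{Q}-\tilde{\boldsymbol{Q}}\|_{H^{1,1}}$ proved in Proposition~\ref{lm}. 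Combining these estimates for each of the four blocks yields the Lipschitz continuity of the map \eqref{qa}.
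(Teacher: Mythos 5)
Your block formulas for $\boldsymbol{A},\boldsymbol{B},\boldsymbol{C},\boldsymbol{D}$ are correct, and your treatment of the unweighted $L^2_k$ bounds is sound (for those one does not even need the extra Volterra iteration: Minkowski with $\|\boldsymbol{Q}\|_{L^1}\,\|m^--I_{p+q}\|_{L_x^\infty L_k^2}$ already suffices, which is what the paper does). The genuine gap is in your $\partial_k$ estimates. You differentiate under the integral over all of $\mathbb{R}$ and invoke ``$\partial_k m_{22}^-\in L_x^\infty L_k^2$, which is in Proposition \ref{lm}''. But Proposition \ref{lm} only asserts $m^--I_{p+q}\in L_x^\infty(\mathbb{R}^-,H_k^1)$: the derivative bound \eqref{kki} explicitly uses $y<x<0$ to control the factor $|y-x|$ by $|y|$, and for $x>0$ the same computation only yields $\|\partial_k m^-(x,\cdot)\|_{L_k^2}\lesssim \|U\|_{L^{2,1}}+|x|\,\|U\|_{L^2}$, i.e.\ linear growth. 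Hence the term $\int_0^{\infty}e^{2iyk}\boldsymbol{Q}(y)\,\partial_k m_{22}^-(y,k)\,dy$ is controlled only by $\int_0^{\infty}\langle y\rangle|\boldsymbol{Q}(y)|\,dy$, which is not finite for a general $\boldsymbol{Q}\in L^{2,1}$ (take $|\boldsymbol{Q}|\sim\langle y\rangle^{-2}$). The same obstruction hits the $\partial_k m_{11}^-$ term in your diagonal-block argument. The paper sidesteps this entirely by writing $S(k)=m^+(0,k)^{-1}m^-(0,k)$ (equation \eqref{2+abcd} at $x=0$) and differentiating there, so that the $H_k^1$ regularity of $m^-$ and of $m^+$ is needed only at the single point $x=0$, where both halves of Proposition \ref{lm} apply. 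To repair your argument you would need either this trick or a splitting of the $y$-integral at $0$, re-expressing the $y>0$ portion through $m^+$.

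A second, smaller issue sits in your weighted estimate for $\boldsymbol{B}$: after integrating by parts in $y$ you must bound $\int e^{2iyk}\boldsymbol{Q}'(y)\bigl(m_{22}^-(y,k)-I_q\bigr)\,dy$, and Minkowski there requires $\boldsymbol{Q}'\in L^1$, which $H^{1,1}=H^1\cap L^{2,1}$ does not give (there is no weight on the derivative). This one is repairable — insert the Volterra identity once more so that Plancherel acts on $\boldsymbol{Q}'$ against an $L^1_z$ density — but as written the step does not close. The paper's route for the weight is different and avoids both pitfalls: it integrates by parts inside $KI_{p+q}=h_1+h_2$ and propagates everything through the resolvent bound of Lemma \ref{lk} in $L_x^\infty L_k^2$, where no factors of $y-x$ or of $\boldsymbol{Q}'$ outside $L^2$ ever appear.
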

\begin{proof}
The idea of this proof was first given by Zhou \cite{Zhou1998}, and we still give a detailed proof progress to show that the conclusions are still
hold for the scattering data in matrix form.
The integral equation \eqref{2+s} can be rewritten in the form
\begin{equation}\label{2+s1}
\begin{aligned}
S(k)&=I_{p+q}+\int_{\mathbb{R}}e^{iyk\operatorname{ad}\sigma_{3}}U(y)\left(m^{-}(y,k)-I_{p+q}\right)dy+\int_{\mathbb{R}}e^{iyk\operatorname{ad}\sigma_{3}}U(y)dy,\\
&\equiv I_{p+q}+S_{1}(k)+S_{2}(k).
\end{aligned}
\end{equation}
Clearly,
$S_{2}(k)\in H^{1,1}(\mathbb{R})$ by Fourier transform, so we only need to focus on $S_{1}(k)$.

Let us first prove that $S_{1}(k)\in H^{1}(\mathbb{R})$. By \eqref{2+m-i}, we can use the Minkowski inequality to estimate
\begin{align*}
\left\|S_{1}\right\|_{L_{k}^{2}}&=\left\|\int_{\mathbb{R}}e^{iyk\operatorname{ad}\sigma_{3}}U(y)\left(m^{-}(y,k)-I_{p+q}\right)dy\right\|_{L_{k}^{2}}\\
&\leq\left\|m^{-}(x,k)-I_{p+q}\right\|_{L_{x}^{\infty}L_{k}^{2}}\left\|U\right\|_{L^{1}}\\
&\lesssim \left\|U\right\|_{L^{2}}\left\|U\right\|_{L^{1}}e^{\left\|U\right\|_{L^{1}}}.
\end{align*}
Considering the equation \eqref{2+abcd} at $x=0$, we have
\begin{align*}
\partial_{k}S(k)=\partial_{k}(m^{+}(0,k)^{-1})m^{-}(0,k)+m^{+}(0,k)^{-1}\partial_{k}m^{-}(0,k),
\end{align*}
then $\partial_{k}S(k)\in L^{2}(\mathbb{R})$ follows from Proposition \ref{lm}.

Next, we analyze the $kS_{1}(k)$ in order to show that $\boldsymbol{B}(k), \boldsymbol{C}(k) \in L^{2,1}(\mathbb{R})$. Notice that the function $(KI_{p+q})(x,k)$ can be rewritten through integration by parts as follows
\begin{equation*}
\begin{aligned}
(KI_{p+q})(x,k)&=\int_{-\infty}^{x}e^{i(y-x)k\operatorname{ad}\sigma_{3}}U(y)dy\\
&=\frac{1}{ik}(\operatorname{ad}\sigma_{3})^{-1}U(x)-\frac{1}{ik}(\operatorname{ad}\sigma_{3})^{-1}\int_{-\infty}^{x}e^{i(y-x)k\operatorname{ad}\sigma_{3}}U'(y)dy\\
&\triangleq  h_{1}(x,k)+h_{2}(x,k),
\end{aligned}
\end{equation*}
then from \eqref{2+mk} we have
\begin{equation*}
kS_{1}(k)=\int_{\mathbb{R}}e^{iyk\operatorname{ad}\sigma_{3}}U(y)k(I-K)^{-1}\left(h_{1}(y,k)+h_{2}(y,k)\right)dy.
\end{equation*}
For $h_{2}(x,k)$, the application of Lemma \ref{lk} and Minkowski inequality gives
\begin{align*}
&\left\|\int_{\mathbb{R}}e^{iyk\operatorname{ad}\sigma_{3}}U(y)k(I-K)^{-1}h_{2}(y,k)dy\right\|_{L_{k}^{2}}\\
\leq&\left\|(I-K)^{-1}kh_{2}(x,k)\right\|_{L_{x}^{\infty}L_{k}^{2}}\left\|U\right\|_{L^{1}}\\
\leq&\left\|kh_{2}(x,k)\right\|_{L_{x}^{\infty}L_{k}^{2}}e^{\left\|U\right\|_{L^{1}}}\left\|U\right\|_{L^{1}}\\
\lesssim & \|U'\|_{L^{2}}\left\|U\right\|_{L^{1}}e^{\left\|U\right\|_{L^{1}}}.
\end{align*}
The analysis of $h_{1}(x,k)$ will be more complicated, so we first decompose $(I-K)^{-1}h_{1}$ into the following form
\begin{align*}
(I-K)^{-1}h_{1}(x,k)&=h_{1}(x,k)+Kh_{1}(x,k)+(I-K)^{-1}K^{2}h_{1}(x,k)\\
&\triangleq h_{1}(x,k)+g_{1}(x,k)+g_{2}(x,k).
\end{align*}
Our goal is to prove that
\begin{align*}
\int_{\mathbb{R}}e^{iyk\operatorname{ad}\sigma_{3}}U(y)k\left(h_{1}(y,k)+g_{1}(y,k)+g_{2}(y,k)\right)dy\in L^{2}(\mathbb{R}).
\end{align*}
We notice that $\int_{\mathbb{R}}e^{iyk\operatorname{ad}\sigma_{3}}U(y)kh_{1}(y,k)dy$ does no contribute to $\boldsymbol{B}(k)$ and $\boldsymbol{C}(k)$ since it is a block diagonal matrix.
For $kg_{1}(x,k)$, we have
\begin{align*}
&\left\|\int_{\mathbb{R}}e^{iyk\operatorname{ad}\sigma_{3}}U(y)kg_{1}(y,k)dy\right\|_{L_{k}^{2}}\\
=&\left\|\int_{\mathbb{R}}e^{iyk\operatorname{ad}\sigma_{3}}U(y)\left(\int_{-\infty}^{y}U(t)(i\operatorname{ad}\sigma_{3})^{-1}U(t)dt\right)dy\right\|_{L_{k}^{2}}\\
\lesssim&\left\|U\right\|_{L^{2}}^{2},
\end{align*}
the last inequality can be obtained by triangularity and Fourier transform.
For $kg_{2}(x,k)$, by using Minkowski inequality, Lemma \ref{lk} and an analogous method of proof with $kg_{1}$, we obtain
\begin{align*}
&\left\|\int_{\mathbb{R}}e^{iyk\operatorname{ad}\sigma_{3}}U(y)(I-K)^{-1}K^{2}kh_{1}(y,k)dy\right\|_{L_{k}^{2}}\\
\leq&\left\|(I-K)^{-1}K^{2}kh_{1}\right\|_{L_{x}^{\infty}L_{k}^{2}}\left\|U\right\|_{L^{1}}
\leq\left\|K^{2}kh_{1}\right\|_{L_{x}^{\infty}L_{k}^{2}}\left\|U\right\|_{L^{1}}e^{\left\|U\right\|_{L^{1}}}\\
=&\left\|\int_{-\infty}^{x}e^{i(y-x)k\operatorname{ad}\sigma_{3}}U(y)\left(\int_{-\infty}^{y}U(t)(i\operatorname{ad}\sigma_{3})^{-1}U(t)dt\right)dy\right\|_{L_{x}^{\infty}L_{k}^{2}}\left\|U\right\|_{L^{1}}e^{\left\|U\right\|_{L^{1}}}\\
\lesssim&\left\|U\right\|_{L^{1}}^{2}\left\|U\right\|_{L^{1}}e^{\left\|U\right\|_{L^{1}}}.
\end{align*}
So we get the smoothness and decay properties of scattering date in matrix form. Suppose that $\boldsymbol{Q}, \ \tilde{\boldsymbol{Q}}\in H^{1,1}(\mathbb{R})$ satisfy $\|\boldsymbol{Q}\|_{H^{1,1}(\mathbb{R})}, \|\tilde{\boldsymbol{Q}}\|_{H^{1,1}(\mathbb{R})}<\gamma$ for some $\gamma>0$ and denote the corresponding scattering matrix by $S(k)$ and $\tilde{S}(k)$ respectively. Using the Lipschitz continuous in Proposition \ref{lm} and repeating the above proof we can obtain the Lipschitz continuous for scattering date. Therefore, we complete the proof.
\end{proof}

We next introduce the matrix reflection coefficient, which plays a crucial role in the inverse scattering transform method. For the scattering data in the form of matrices, we can define the relevant $p\times q$ matrix reflection coefficient as
\begin{equation}\label{2+r}
\boldsymbol{R}(k):=\boldsymbol{B}(k)\boldsymbol{D}^{-1}(k),\quad k\in\mathbb{R},
\end{equation}
where $\boldsymbol{R}(k)$ is well-defined when $\det \boldsymbol{D}(k)\neq 0$. The regularity, decay and the Lipschitz continuous associated with the reflection coefficient $\boldsymbol{R}(k)$ can be can be summarized by the following Proposition, which can be deduced directly from the Proposition \ref{lmab}.
\begin{proposition}\label{pro1}
If $\boldsymbol{Q}\in H^{1,1}(\mathbb{R})$, then $\boldsymbol{R}\in H^{1,1}(\mathbb{R})$ and the map $\boldsymbol{Q}\mapsto \boldsymbol{R}$ is Lipschitz continuous.
\end{proposition}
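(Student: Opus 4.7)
The plan is to reduce the statement to the regularity and Lipschitz estimates already established for the block scattering data in Proposition \ref{lmab}, combined with algebraic manipulation of $\boldsymbol{R}=\boldsymbol{B}\boldsymbol{D}^{-1}$ and the one-dimensional Sobolev embedding $H^{1}(\mathbb{R})\hookrightarrow L^{\infty}(\mathbb{R})$. By Proposition \ref{lmab}, $\boldsymbol{D}-I_{q}\in H^{1}(\mathbb{R})$, so $\boldsymbol{D}\in L^{\infty}(\mathbb{R})$ with $\boldsymbol{D}(k)\to I_{q}$ as $|k|\to\infty$; granted $\det\boldsymbol{D}(k)\neq 0$ on $\mathbb{R}$, the matrix $\boldsymbol{D}^{-1}(k)$ is continuous, bounded, and converges to $I_{q}$ at infinity, giving $\|\boldsymbol{D}^{-1}\|_{L^{\infty}}<\infty$, and similarly for $\tilde{\boldsymbol{D}}^{-1}$ on a fixed $\gamma$-ball in $H^{1,1}(\mathbb{R})$.

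First I would establish $\boldsymbol{R}\in L^{2,1}(\mathbb{R})$ by the trivial estimate
\begin{equation*}
\|\langle k\rangle \boldsymbol{R}\|_{L^{2}}\leq \|\boldsymbol{D}^{-1}\|_{L^{\infty}}\,\|\boldsymbol{B}\|_{L^{2,1}},
\end{equation*}
which is finite by Proposition \ref{lmab}. For $\partial_{k}\boldsymbol{R}\in L^{2}(\mathbb{R})$, the product/quotient rule gives
\begin{equation*}
\partial_{k}\boldsymbol{R} = (\partial_{k}\boldsymbol{B})\boldsymbol{D}^{-1}-\boldsymbol{B}\boldsymbol{D}^{-1}(\partial_{k}\boldsymbol{D})\boldsymbol{D}^{-1},
\end{equation*}
where $\partial_{k}\boldsymbol{B},\partial_{k}\boldsymbol{D}\in L^{2}$, $\boldsymbol{D}^{-1}\in L^{\infty}$, and $\boldsymbol{B}\in H^{1,1}(\mathbb{R})\hookrightarrow L^{\infty}$; each summand lies in $L^{2}(\mathbb{R})$, so $\boldsymbol{R}\in H^{1,1}(\mathbb{R})$.

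For the Lipschitz continuity I would use the splitting
\begin{equation*}
\boldsymbol{R}-\tilde{\boldsymbol{R}} = (\boldsymbol{B}-\tilde{\boldsymbol{B}})\boldsymbol{D}^{-1} + \tilde{\boldsymbol{B}}\,\boldsymbol{D}^{-1}(\tilde{\boldsymbol{D}}-\boldsymbol{D})\tilde{\boldsymbol{D}}^{-1},
\end{equation*}
based on the resolvent identity $\boldsymbol{D}^{-1}-\tilde{\boldsymbol{D}}^{-1}=\boldsymbol{D}^{-1}(\tilde{\boldsymbol{D}}-\boldsymbol{D})\tilde{\boldsymbol{D}}^{-1}$, which reduces $\boldsymbol{R}-\tilde{\boldsymbol{R}}$ to the Lipschitz differences $\boldsymbol{B}-\tilde{\boldsymbol{B}}$ and $\boldsymbol{D}-\tilde{\boldsymbol{D}}$ supplied by Proposition \ref{lmab}, multiplied by factors bounded uniformly on the $\gamma$-ball. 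The $L^{2,1}$ part of $\|\boldsymbol{R}-\tilde{\boldsymbol{R}}\|_{H^{1,1}}$ falls out immediately; differentiating the splitting and controlling each summand exactly as in the previous paragraph gives the $H^{1}$ part.

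The step I expect to be the main obstacle is producing the \emph{uniform} $L^{\infty}$ bound on $\boldsymbol{D}^{-1}$ and $\tilde{\boldsymbol{D}}^{-1}$ over the $\gamma$-ball. Pointwise invertibility of $\boldsymbol{D}(k)$ plus decay at infinity is enough for a fixed $\boldsymbol{Q}$, but for the Lipschitz statement one needs a lower bound on $|\det\boldsymbol{D}(k)|$ that does not degenerate as $\boldsymbol{Q}$ varies, and such a bound fails in general for the focusing problem because of the spectral singularities and zeros of $\det\boldsymbol{D}$ flagged in the introduction. Accordingly Proposition \ref{pro1} is effectively a defocusing statement, where the symmetry/self-adjoint structure sketched in the introduction is what will ultimately force $\det\boldsymbol{D}(k)\neq 0$ on $\mathbb{R}$ with a uniform lower bound; in the focusing case $\boldsymbol{R}$ is replaced by the $\boldsymbol{M}$-based scattering data on the contour $\Gamma$, which is the job of Section \ref{s4}.
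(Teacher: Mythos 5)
Your argument is correct and is precisely the ``direct deduction from Proposition \ref{lmab}'' that the paper invokes without writing out: the $L^{2,1}$ bound via $\|\boldsymbol{D}^{-1}\|_{L^{\infty}}$, the product rule for $\partial_{k}(\boldsymbol{B}\boldsymbol{D}^{-1})$ together with $H^{1}(\mathbb{R})\hookrightarrow L^{\infty}(\mathbb{R})$, and the resolvent identity for the Lipschitz estimate are exactly the intended steps. Your closing caveat is also well placed: the paper itself defers the invertibility of $\boldsymbol{D}$ to Sections \ref{s3} and \ref{s4}, where in the defocusing case the identity $|\det\boldsymbol{D}|^{2}=\det\left(I_{q}+\boldsymbol{C}\boldsymbol{C}^{\dag}\right)\geq1$ supplies exactly the uniform lower bound you flag as the main obstacle, while in the focusing case the proposition is only ever applied where $\det\boldsymbol{D}$ is known to be nonzero (for $|k|>S_{\infty}$, or for the cut-off potential with $\|U\|_{L^{1}}\ll1$).
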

A very natural question is how to guarantee the invertibility of $\boldsymbol{D}(k)$ and build the unique solvable RH problem. The situations of the defocusing and focusing matrix NLS equations are different, so we consider them separately in the following Section \ref{s3} and Section \ref{s4}.

\section{Inverse Scattering Map for Defocusing Case}\label{s3}

In this section, we  study the  inverse scattering  transform  and further
prove the  well-posedness  in space $H^{1,1}(\mathbb{R})$  for the
 defocusing $p\times q$ matrix NLS equation.

\subsection{Symmetry reduction}\label{s31}

By rewriting the equation \eqref{2.1} as
\begin{equation*}
(i\sigma_{3}\partial_{x}-i\sigma_{3}U)\psi=k\psi,
\end{equation*}
it can be obtained that in this case the scattering problem is self-adjoint, and therefore the corresponding eigenvalues need to be real. This is consistent with the scalar defocusing NLS equation. Further, this implies that $\det \boldsymbol{D}(k)$ and  $\det \boldsymbol{A}(k)$ have no nonreal zeros.

By observing the features of the matrix $U$ defined in \eqref{Q}, it is easy to obtain the symmetry relation
\begin{equation}\label{Qd}
U^{\dag}= U.
\end{equation}
With the help of \eqref{2.3} and \eqref{Qd}, we can give the symmetry relation
\begin{equation}
(m^{\pm})^{-1}(k)=\sigma_{3}(m^{\pm})^{\dag}(k^{*})\sigma_{3}
\end{equation}
for the eigenfunctions. And then, the symmetry of the scattering matrix
\begin{equation}\label{2+s-1}
S^{-1}(k)=\sigma_{3}S^{\dag}(k^{*})\sigma_{3}
\end{equation}
follows from \eqref{2+abcd}. For $k\in\mathbb{R}$, it can be further obtained from \eqref{2+s-1} that
\begin{align*}
&|\det \boldsymbol{D}|^{2}=\det\left(\boldsymbol{D}\boldsymbol{D}^{\dag}\right)=\det\left(I_{q}+\boldsymbol{C}\boldsymbol{C}^{\dag}\right)>0,\\
&|\det \boldsymbol{A}|^{2}=\det\left(\boldsymbol{A}\boldsymbol{A}^{\dag}\right)=\det\left(I_{p}+\boldsymbol{B}\boldsymbol{B}^{\dag}\right)>0.
\end{align*}
Thus $\det \boldsymbol{D}(k)$ and $\det \boldsymbol{A}(k)$  have no poles or spectral the scattering data $\boldsymbol{D}(k)$ and $\boldsymbol{A}(k)$ are naturally invertible for $k\in\mathbb{C}^{-}\cup\mathbb{R}$ and $k\in\mathbb{C}^{+}\cup\mathbb{R}$, $k\in\mathbb{C}^{+}\cup\mathbb{R}$, respectively. Making use of symmetry relation \eqref{2+s-1} we have
\begin{equation}\label{ca1}
\boldsymbol{C}(k)\boldsymbol{A}^{-1}(k)=\left(\boldsymbol{B}(k)\boldsymbol{D}^{-1}(k)\right)^{\dag}= \boldsymbol{R}^{\dag}(k).
\end{equation}

\subsection{Set-up of an  RH problem }\label{s32}
We construct a piecewise analytic matrix function from \eqref{2+abcd}, which will be an essential part of the subsequent RH problem. Let
\begin{equation}\label{mxk1}
M(x,k)=\left\{
\begin{aligned}
&\begin{bmatrix}m_{1}^{-}(x,k)\boldsymbol{A}^{-1}(k),&m_{2}^{+}(x,k)\end{bmatrix},\quad \text{Im}k>0,\\
&\begin{bmatrix}m_{1}^{+}(x,k),&m_{2}^{-}(x,k)\boldsymbol{D}^{-1}(k)\end{bmatrix},\quad \text{Im}k<0,
\end{aligned}
\right.
\end{equation}
we have
\begin{equation}\label{M1}
M_{+}(x,k)=M_{-}(x,k)e^{-ixk\operatorname{ad}\sigma_{3}}v(k),\quad k\in\mathbb{R},
\end{equation}
where
\begin{equation}\label{v1}
v(k)=\begin{bmatrix}I_{p}- \boldsymbol{R}(k)\boldsymbol{R}^{\dag}(k^{*})&-\boldsymbol{R}(k)\\
 \boldsymbol{R}^{\dag}(k^{*})&I_{q}\end{bmatrix},
\end{equation}
and
\begin{equation}\label{vpm}
      v(k)=v_{-}^{-1}(k)v_{+}(k)=\begin{bmatrix}I_{p}&-\boldsymbol{R}(k)\\0_{q\times p}&I_{q}\end{bmatrix}
      \begin{bmatrix}I_{p}&0_{p\times q}\\ \boldsymbol{R}^{\dag}(k)&I_{q}\end{bmatrix}.
\end{equation}
For defocusing ($\sigma=1$) matrix NLS equation, neither $\det \boldsymbol{A}(k)$ nor $\det \boldsymbol{D}(k)$ is equal to zero, so $M(x,k)$ is always well-defined.

It follows from \eqref{2.5} that
\begin{equation}\label{m1m2}
\left[m_{1}^{-}, m_{2}^{+}\right]=I_{p+q}+\int_{\delta}^{x}e^{i(y-x)k\operatorname{ad}\sigma_{3}}U(y)\left[m_{1}^{-}(y), m_{2}^{+}(y)\right]dy,
\end{equation}
where $\delta=-\infty$ and $\delta=+\infty$ for the first and second columns of the $2\times2$ block matrix, respectively. Combining \eqref{2.5} and \eqref{2+s}, we have
\begin{equation}\label{m11a}
m_{11}^{-}(x,k)=\boldsymbol{A}(k)+\int_{+\infty}^{x}\boldsymbol{Q}(y)m_{21}^{-}(y,k)dy.
\end{equation}
By substituting \eqref{m11a} into \eqref{m1m2}, we obtain
\begin{equation}\label{m1a}
\left[m_{1}^{-}\boldsymbol{A}^{-1}, m_{2}^{+}\right]=I_{p+q}+\int_{\delta}^{x}e^{i(y-x)k\operatorname{ad}\sigma_{3}}U(y)\left[m_{1}^{-}(y)\boldsymbol{A}^{-1}, m_{2}^{+}(y)\right]dy,
\end{equation}
where $\delta=-\infty$ for the (2,1) entry and $\delta=+\infty$ for the (1,1), (1,2) and (2,2) entries of the $2\times2$ block matrix. And similarly,
\begin{equation}\label{m2d}
\left[m_{1}^{+}, m_{2}^{-}\boldsymbol{D}^{-1}\right]=I_{p+q}+\int_{\delta}^{x}e^{i(y-x)k\operatorname{ad}\sigma_{3}}U(y)\left[m_{1}^{+}(y), m_{2}^{-}(y)\boldsymbol{D}^{-1}\right]dy,
\end{equation}
where $\delta=-\infty$ for the (1,2) entry and $\delta=+\infty$ for the (1,1), (2,1) and (2,2) entries of the $2\times2$ block matrix. The integral equations \eqref{m1a} and \eqref{m2d} imply that $M(x,k)$ satisfies the differential equation \eqref{2.3}. Moreover, we have $M(x,k)\rightarrow I_{p+q} \ \text{as} \ x\rightarrow+\infty$ according to \eqref{mi}, \eqref{2+abcd} and an application of the Lebesgue's dominated convergence theorem for $m_{21}^{-}(x,k)$ and $m_{12}^{-}(x,k)$. Hence, $M(x,k)$ defined by \eqref{mxk1} is right-normalized. In fact, we can also construct a left-normalized piecewise analytic matrix function $\widetilde{M}(x,k)$ as follows
\begin{equation}\label{ml}
\widetilde{M}(x,k)=\left\{
\begin{aligned}
&\begin{bmatrix}m_{1}^{-}(x,k),&m_{2}^{+}(x,k)(\boldsymbol{D}^{-1})^{\dag}(k^{*})\end{bmatrix},\quad \text{Im}k>0,\\
&\begin{bmatrix}m_{1}^{+}(x,k)(\boldsymbol{A}^{-1})^{\dag}(k^{*}),&m_{2}^{-}(x,k)\end{bmatrix},\quad \text{Im}k<0,
\end{aligned}
\right.
\end{equation}
which satisfies $\widetilde{M}(x,k)\rightarrow I_{p+q} \ \text{as} \ x\rightarrow -\infty$. Let us define a auxiliary matrix $\delta(k)$ by
\begin{equation}\label{del}
\delta(x)=\left\{
\begin{aligned}
&\begin{bmatrix}\boldsymbol{A}(k)&0_{p\times q}\\0_{q\times p}&(\boldsymbol{D}^{-1})^{\dag}(k^{*})\end{bmatrix},\quad \text{Im}k>0,\\
&\begin{bmatrix}(\boldsymbol{A}^{-1})^{\dag}(k^{*})&0_{p\times q}\\0_{q\times p}&\boldsymbol{D}(k)\end{bmatrix},\quad \text{Im}k<0.
\end{aligned}
\right.
\end{equation}
The relationship between $\widetilde{M}(x,k)$ and $M(x,k)$ can be established by $\delta(k)$, i.e.
\begin{equation}
\widetilde{M}(x,k)=M(x,k)\delta(k).
\end{equation}
Moreover, the jump matrix $\tilde{v}(k)$ for $\widetilde{M}(x,k)$ can be given by
\begin{equation}
\tilde{v}(k)=\delta_{-}^{-1}(k)v(k)\delta_{+}(k),
\end{equation}
where $v(k)$ is defined by \eqref{v1}.

\subsection{Time evolution of the matrix scattering data}\label{s33}
Given the fundamental solutions $M_{\pm}(x,t,k)$ of the Lax pair
\begin{align}
m_{x}(x,t,k)&=-ik\operatorname{ad}\sigma_{3}m(x,t,k)+U(x,t)m(x,t,k),\label{x1}\\
m_{t}(x,t,k)&=-2ik^{2}\operatorname{ad}\sigma_{3}m(x,t,k)+P(x,t,k)m(x,t,k),\label{t1}
\end{align}
satisfying
\begin{equation}
M_{+}(x,t,k)=M_{-}(x,t,k)e^{-ixk\operatorname{ad}\sigma_{3}}v(t,k).
\end{equation}
Here,
$P(x,t,k)=2kU(x,t)+i\sigma_{3}U_{x}(x,t)-i\sigma_{3}U^{2}(x,t)$.
Substituting
\begin{equation*}
M_{-}(x,t,k)e^{-ixk\sigma_{3}}\ \text{and} \ M_{-}(x,t,k)e^{-ixk\sigma_{3}}v(t,k)
\end{equation*}
into \eqref{t1}, and using the fact that $M_{\pm}(x,t,k)\rightarrow I_{p+q}$ as $x\rightarrow+\infty$, we obtain
\begin{equation*}
v_{t}(t,k)=-2ik^{2}\operatorname{ad}\sigma_{3}v(t,k),
\end{equation*}
so we have
\begin{equation}
v(t,k)=e^{-2itk^{2}\operatorname{ad}\sigma_{3}}v(0,k),
\end{equation}
and
\begin{equation}\label{vt1}
v_{\pm}(t,k)=e^{-2itk^{2}\operatorname{ad}\sigma_{3}}v_{\pm}(0,k).
\end{equation}
Then we can extend the results of Proposition \ref{pro1} to $v_{\pm}(t,k)$.
\begin{proposition}\label{ct1}
If $\boldsymbol{Q}_{0}\in H^{1,1}(\mathbb{R})$, then for every $t\in[-T,T],\ T>0$, we have $v_{\pm}(t,k)-I_{p+q}\in H_{k}^{1,1}(\mathbb{R})$. Moreover, the mapping $\boldsymbol{Q}_{0}\mapsto v_{\pm}-I_{p+q}$ is Lipschitz continuous from $H^{1,1}(\mathbb{R})$ to $C([-T,T], H^{1,1}(\mathbb{R}))$ for every $T>0$.
\end{proposition}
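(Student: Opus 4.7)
The plan is to exploit the explicit formula \eqref{vt1} for the time evolution, which reduces everything to multiplication by the unimodular phase $e^{\pm 4itk^{2}}$ combined with the already-established regularity of the reflection coefficient $\boldsymbol{R}(k)$.

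First I would write out $v_{\pm}(t,k)$ explicitly by evaluating the adjoint exponential on the block-triangular factors in \eqref{vpm}. Since $\operatorname{ad}\sigma_{3}$ multiplies an off-diagonal $(1,2)$ block by $+2$ and a $(2,1)$ block by $-2$, one obtains
\begin{equation*}
v_{-}(t,k)-I_{p+q}=\begin{bmatrix}0_{p\times p}&e^{-4itk^{2}}\boldsymbol{R}(k)\\ 0_{q\times p}&0_{q\times q}\end{bmatrix},\qquad v_{+}(t,k)-I_{p+q}=\begin{bmatrix}0_{p\times p}&0_{p\times q}\\ e^{4itk^{2}}\boldsymbol{R}^{\dag}(k)&0_{q\times q}\end{bmatrix},
\end{equation*}
so the problem reduces to controlling $e^{\mp 4itk^{2}}\boldsymbol{R}(k)$ (or its Hermitian conjugate) in $H^{1,1}(\mathbb{R})$, uniformly for $t\in[-T,T]$.

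Next I would verify the two pieces of the $H^{1,1}$ norm. Because $|e^{\pm 4itk^{2}}|=1$, the weighted $L^{2,1}$ norm is exactly $\|\boldsymbol{R}\|_{L^{2,1}}$, which is finite by Proposition \ref{pro1}. For the $H^{1}$ part, compute
\begin{equation*}
\partial_{k}\bigl(e^{-4itk^{2}}\boldsymbol{R}(k)\bigr)=-8itk\,e^{-4itk^{2}}\boldsymbol{R}(k)+e^{-4itk^{2}}\boldsymbol{R}'(k),
\end{equation*}
take $L_{k}^{2}$ norms, and bound the first term by $8T\|\boldsymbol{R}\|_{L^{2,1}}$ and the second by $\|\boldsymbol{R}'\|_{L^{2}}$; both are finite by Proposition \ref{pro1}. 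Thus $\|v_{\pm}(t,\cdot)-I_{p+q}\|_{H^{1,1}}\leq C(T)\|\boldsymbol{R}\|_{H^{1,1}}$ uniformly in $t\in[-T,T]$.

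For continuity in $t$, the map $t\mapsto e^{\pm 4itk^{2}}\boldsymbol{R}(k)$ is continuous into $H^{1,1}(\mathbb{R})$ by dominated convergence: pointwise convergence is obvious, and the estimate $|e^{\pm 4itk^{2}}-e^{\pm 4it_{0}k^{2}}|\leq 4|t-t_{0}|k^{2}$ combined with $\boldsymbol{R},\boldsymbol{R}'\in H^{1,1}\cap L^{2}$ gives an integrable majorant once one multiplies by $\boldsymbol{R}$ or differentiates. Hence $v_{\pm}(\cdot,\cdot)-I_{p+q}\in C([-T,T],H^{1,1}(\mathbb{R}))$.

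Finally, for Lipschitz continuity of $\boldsymbol{Q}_{0}\mapsto v_{\pm}-I_{p+q}$, take $\boldsymbol{Q}_{0},\tilde{\boldsymbol{Q}}_{0}\in H^{1,1}$ with $H^{1,1}$-norms bounded by $\gamma$ and apply the above estimates to the difference; the unimodular phase factors cancel in modulus and the problem reduces to bounding $\boldsymbol{R}-\tilde{\boldsymbol{R}}$ in $H^{1,1}(\mathbb{R})$, which is Lipschitz in $\boldsymbol{Q}_{0}-\tilde{\boldsymbol{Q}}_{0}$ by Proposition \ref{pro1}. The Lipschitz constant will depend on $T$ and $\gamma$. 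There is no real obstacle here: the only mildly delicate point is making sure the $k$-derivative of the phase is absorbed by the weight in the definition of $L^{2,1}$, which is precisely why $\boldsymbol{R}\in H^{1,1}$ (rather than just $H^{1}$) is required.
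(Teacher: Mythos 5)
Your proposal is correct and follows essentially the same route as the paper: both reduce to the explicit phase factor $e^{\mp 4itk^{2}}$ acting on the off-diagonal blocks, bound the $L^{2,1}$ norm trivially by unimodularity, absorb the extra factor of $k$ from differentiating the phase into the $L^{2,1}$ weight (which is where $\boldsymbol{R}\in H^{1,1}$ rather than $H^{1}$ is used), and then invoke Proposition \ref{pro1} for the Lipschitz dependence on $\boldsymbol{Q}_{0}$. One small caution on the continuity-in-$t$ step: the bound $|e^{\pm 4itk^{2}}-e^{\pm 4it_{0}k^{2}}|\leq 4|t-t_{0}|k^{2}$ cannot itself serve as the dominating function since $k^{2}\boldsymbol{R}(k)$ need not lie in $L^{2}$; you should dominate instead by the $t$-independent majorants $2|\boldsymbol{R}(k)|$ and $16T|k||\boldsymbol{R}(k)|+2|\boldsymbol{R}'(k)|$ and use the quadratic bound only for pointwise convergence on compact $k$-sets, which is precisely the $|k|\leq\gamma$ versus $|k|>\gamma$ splitting the paper carries out.
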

\begin{proof}
For every $t\in[-T,T]$, we have
\begin{equation*}
\|v_{\pm}(t,k)-I_{p+q}\|_{L^{2,1}}\lesssim\|v_{\pm}(0,k)-I_{p+q}\|_{L^{2,1}}, \end{equation*}
\begin{equation*}
\begin{aligned}
\left\|\partial_{k}v_{\pm}(t,k)\right\|_{L^{2}}
&\lesssim\left\|-4itk\operatorname{ad}\sigma_{3}
v_{\pm}(0,k)+\partial_{k}v_{\pm}(0,k)\right\|_{L^{2}}\\
&\lesssim4T\left\|v_{\pm}(0,k)\right\|_{L^{2,1}}+\left\|\partial_{k}v_{\pm}(0,k)\right\|_{L^{2}},
\end{aligned}
\end{equation*}
thus
\begin{equation}\label{v12}
\|v_{\pm}(t,k)-I_{p+q}\|_{H^{1,1}}\lesssim\|v_{\pm}(0,k)-I_{p+q}\|_{H^{1,1}}.
\end{equation}
Next, we show that $v(t,k)$ is uniformly continuous with respect to time $t$ in the sense of the norm $H^{1,1}(\mathbb{R})$. An application of the Lebesgue's dominated convergence theorem implies that for any $\varepsilon>0$, there exists $\gamma>0$ such that
\begin{equation*}
\|v_{\pm}(0,k)-I_{p+q}\|_{H^{1,1}(|k|>\gamma)}<\varepsilon.
\end{equation*}
Then, for every $t_{1}, t_{2}\in [-T,T]$, we have
\begin{align*}
\|v_{\pm}(t_{1},k)-v_{\pm}(t_{2},k)\|_{H^{1,1}(|k|>\gamma)}\lesssim\|v_{\pm}(0,k)-I_{p+q}\|_{H^{1,1}(|k|>\gamma)}<\varepsilon.
\end{align*}
When $|k|\leq\gamma$, we can obtain the estimates
\begin{align}
&\left|e^{\pm2i(t_{1}-t_{2})k^{2}\sigma_{3}}-I_{p+q}\right|=\left|2ik^{2}\sigma_{3}\int_{0}^{t_{1}-t_{2}}e^{\pm2isk^{2}\sigma_{3}}ds\right|\lesssim\gamma^{2}\left|t_{1}-t_{2}\right|,\label{e11}\\
&\left|e^{\pm2it_{1}k^{2}\sigma_{3}}t_{1}-e^{\pm2it_{2}k^{2}\sigma_{3}}t_{2}\right|\lesssim(\gamma^{2}T+1)\left|t_{1}-t_{2}\right|.\label{e21}
\end{align}
By the relation \eqref{vt1} and \eqref{e11}, we have
\begin{align*}
\left\|v_{\pm}(t_{1},k)-v_{\pm}(t_{2},k)\right\|_{L^{2,1}(|k|\leq\gamma)}\lesssim&\left\|v_{\pm}(0,k)\left(e^{2i(t_{1}-t_{2})k^{2}\sigma_{3}}-I_{p+q}\right)\right\|_{L^{2,1}(|k|\leq\gamma)}
\\&+\left\|\left(e^{-2i(t_{1}-t_{2})k^{2}\sigma_{3}}-I_{p+q}\right)v_{\pm}(0,k)\right\|_{L^{2,1}(|k|\leq\gamma)}\\
\lesssim&\gamma^{2}\left|t_{1}-t_{2}\right|\left\|v_{\pm}(0,k)\right\|_{L^{2,1}(|k|\leq\gamma)}.
\end{align*}
Make use of the estimates \eqref{e11} and \eqref{e21}, we obtain
\begin{align*}
&\left\|\partial_{k}(v_{\pm}(t_{1},k)-v_{\pm}(t_{2},k))\right\|_{L^{2}(|k|\leq\gamma)}\\
\lesssim&\left\|t_{1}k\operatorname{ad}\sigma_{3}v_{\pm}(0,k)\left(e^{2it_{1}k^{2}\sigma_{3}}-e^{2it_{2}k^{2}\sigma_{3}
}\right)\right\|_{L^{2}(|k|\leq\gamma)}\\
&+\left\|\left(e^{-2it_{1}k^{2}\sigma_{3}}t_{1}-e^{-2it_{2}k^{2}\sigma_{3}}t_{2}\right)k\operatorname{ad}\sigma_{3}v_{\pm}(0,k)\right\|_{L^{2}(|k|\leq\gamma)}\\
&+\left\|\left(e^{-2it_{1}k^{2}\operatorname{ad}\sigma_{3}}-e^{-2it_{2}k^{2}\operatorname{ad}\sigma_{3}}\right)\partial_{k}v_{\pm}(0,k)\right\|_{L^{2}(|k|\leq\gamma)}\\
\lesssim&\left(2\gamma^{2}T+1\right)\left|t_{1}-t_{2}\right|\left\|v_{\pm}(0,k)\right\|_{L^{2,1}(|k|\leq\gamma)}+\gamma^{2}\left|t_{1}-t_{2}\right|\left\|\partial_{k}v_{\pm}(0,k)\right\|_{L^{2}(|k|\leq\gamma)}.
\end{align*}
Therefore, in the sense of the norm $H^{1,1}(\mathbb{R})$, $v(t,k)$ is uniformly continuous with respect to time $t$. The Lipschitz continuity follows from Proposition \ref{pro1}.
\end{proof}

\subsection{Unique solvability of the RH problem}\label{s34}
For the case $\sigma=1$: defocusing matrix NLS equation, let us consider the following RH problem:
\begin{rhp}\label{RH1}
Find a $(p+q)\times(p+q)$ matrix-valued function $M(x,k)$ that satisfies the following conditions:
\begin{itemize}
  \item Analyticity: $M(x,k)$ is analytic for $k\in\mathbb{C}\setminus\mathbb{R}$.
  \item Jump condition:  $M$ has continuous boundary values $M_{\pm}(x,k)$ as $k'\rightarrow k$ from $\mathbb{C}^{\pm}$ to $\mathbb{R}$ and
      \begin{equation}\label{vx}
      M_{+}(x,k)=M_{-}(x,k)v_{x}(k),\quad k\in\mathbb{R},
      \end{equation}
      where
      \begin{equation}\label{vk}
       v_{x}(k)=e^{-ixk\operatorname{ad \sigma_{3}}}v(k)=e^{-ixk\operatorname{ad \sigma_{3}}}\begin{bmatrix}I_{p}- \boldsymbol{R}(k)\boldsymbol{R}^{\dag}(k)&-\boldsymbol{R}(k)\\ \boldsymbol{R}^{\dag}(k)&I_{q}\end{bmatrix}.
      \end{equation}
  \item Normalization: $M(x,k)=I_{p+q}+\mathcal{O}(k^{-1})$ as $k\rightarrow\infty$.
\end{itemize}
\end{rhp}
In order to prove the Lipschitz continuity of the inverse scattering map, below we consider the solvability of the RH problems in a larger space.
\begin{proposition}\label{prh1}
If $v_{\pm}(k)-I_{p+q}\in H^{\frac{1}{2}+\varepsilon}(\mathbb{R})$, $\varepsilon>0$, then there exists a unique solution to the RH problem \ref{RH1} for every $x\in\mathbb{R}$.
\end{proposition}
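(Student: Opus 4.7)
The plan is to recast RH Problem \ref{RH1} as a Beals--Coifman singular integral equation on $L^{2}(\mathbb{R})$ and then invoke Zhou's vanishing lemma. Using the factorization \eqref{vpm}, write $v_{x}(k)=v_{-,x}^{-1}(k)v_{+,x}(k)$ with $v_{\pm,x}(k)=e^{-ixk\operatorname{ad}\sigma_{3}}v_{\pm}(k)$, and set $w_{+,x}=v_{+,x}-I_{p+q}$, $w_{-,x}=I_{p+q}-v_{-,x}$. Classical Beals--Coifman theory then says that $M$ solves RH Problem \ref{RH1} if and only if
\begin{equation*}
M(x,k)=I_{p+q}+\mathcal{C}_{\mathbb{R}}\bigl((w_{+,x}+w_{-,x})\mu\bigr)(k),
\end{equation*}
where $\mu=M_{\pm}v_{\pm,x}^{-1}\in I_{p+q}+L^{2}(\mathbb{R})$ solves $(I-C_{w_{x}})\mu=I_{p+q}$ for the Beals--Coifman operator
\begin{equation*}
C_{w_{x}}f:=\mathcal{C}_{\mathbb{R}}^{+}(fw_{-,x})+\mathcal{C}_{\mathbb{R}}^{-}(fw_{+,x}).
\end{equation*}
The problem thus reduces to establishing invertibility of $I-C_{w_{x}}$ on $L^{2}(\mathbb{R})$ for every fixed $x\in\mathbb{R}$.

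The first step is boundedness and Fredholmness. Since $H^{\frac{1}{2}+\varepsilon}(\mathbb{R})\hookrightarrow L^{\infty}(\mathbb{R})\cap L^{2}(\mathbb{R})$ and the conjugating factor $e^{-ixk\operatorname{ad}\sigma_{3}}$ is unitary on the real line, the weights $w_{\pm,x}$ lie in $L^{\infty}\cap L^{2}$ with norms uniform in $x$. Combined with the $L^{2}$-boundedness of the Cauchy projectors $\mathcal{C}_{\mathbb{R}}^{\pm}$, this makes $C_{w_{x}}$ bounded on $L^{2}(\mathbb{R})$ with operator norm controlled uniformly in $x$. A standard argument of Zhou \cite{Zhou1989a} --- splitting $w_{\pm,x}$ into a smooth piece producing a compact commutator with $\mathcal{C}_{\mathbb{R}}^{\pm}$ plus a small-in-$L^{\infty}$ remainder, and then using the stability of the Fredholm index --- shows that $I-C_{w_{x}}$ is Fredholm of index zero on $L^{2}(\mathbb{R})$.

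The core of the proof is then to show $\ker(I-C_{w_{x}})=\{0\}$, for which I would apply Zhou's vanishing lemma \cite{Zhou1989b}. Its hypothesis is the pointwise Hermitian positivity $v_{x}(k)+v_{x}^{\dag}(k)>0$ for all $k\in\mathbb{R}$. Because $e^{-ixk\operatorname{ad}\sigma_{3}}$ is a unitary conjugation on $\mathbb{R}$, this reduces to $v(k)+v^{\dag}(k)>0$, and a direct block computation from \eqref{v1} gives
\begin{equation*}
v(k)+v^{\dag}(k)=\begin{bmatrix}2(I_{p}-\boldsymbol{R}(k)\boldsymbol{R}^{\dag}(k))&0_{p\times q}\\0_{q\times p}&2I_{q}\end{bmatrix},
\end{equation*}
so everything hinges on the estimate $I_{p}-\boldsymbol{R}(k)\boldsymbol{R}^{\dag}(k)>0$. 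This is the main obstacle in the matrix setting: in the scalar or $2\times 2$ reductions it amounts to $|\boldsymbol{R}|<1$, but here one must exploit the full symmetry \eqref{2+s-1}, which is equivalent to $S^{\dag}\sigma_{3}S=\sigma_{3}$ and yields $\boldsymbol{D}^{\dag}\boldsymbol{D}=I_{q}+\boldsymbol{B}^{\dag}\boldsymbol{B}$. Substituting this into $\boldsymbol{R}\boldsymbol{R}^{\dag}=\boldsymbol{B}\boldsymbol{D}^{-1}(\boldsymbol{D}^{\dag})^{-1}\boldsymbol{B}^{\dag}=\boldsymbol{B}(I_{q}+\boldsymbol{B}^{\dag}\boldsymbol{B})^{-1}\boldsymbol{B}^{\dag}$ and reading off eigenvalues via the singular value decomposition of $\boldsymbol{B}$ yields eigenvalues of the form $s^{2}/(1+s^{2})<1$. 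The vanishing lemma then forces triviality of the kernel, and combined with the Fredholm index being zero this gives invertibility of $I-C_{w_{x}}$ on $L^{2}(\mathbb{R})$ for every $x\in\mathbb{R}$, yielding the unique $\mu(x,\cdot)$ and hence the unique solution $M(x,\cdot)$ of RH Problem \ref{RH1}.
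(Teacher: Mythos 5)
Your proposal is correct and follows essentially the same route as the paper: reduction to the Beals--Coifman equation, Fredholm index zero via rational approximation of $w_{\pm,x}$, and Zhou's vanishing lemma applied to $v+v^{\dag}=2\,\mathrm{diag}(I_{p}-\boldsymbol{R}\boldsymbol{R}^{\dag},I_{q})$. The only (cosmetic) difference is how positivity of $I_{p}-\boldsymbol{R}\boldsymbol{R}^{\dag}$ is extracted from the $\sigma_{3}$-unitarity of $S$ --- you use $\boldsymbol{D}^{\dag}\boldsymbol{D}=I_{q}+\boldsymbol{B}^{\dag}\boldsymbol{B}$ and an eigenvalue bound, while the paper derives the explicit factorization $I_{p}-\boldsymbol{R}\boldsymbol{R}^{\dag}=(\boldsymbol{A}^{-1})^{\dag}\boldsymbol{A}^{-1}$ from \eqref{ca1}; both are valid (and note the factor ordering in your resolvent formula should be $\mathcal{C}_{\mathbb{R}}(\mu(w_{+,x}+w_{-,x}))$ to match your definition $\mu=M_{\pm}v_{\pm,x}^{-1}$).
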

\begin{proof}
It is easy to see that $v_{x}(k)$ admits the triangular factorization
\begin{align}\label{vv}
v_{x}(k)=v_{x-}^{-1}(k)v_{x+}(k)=\begin{bmatrix}I_{p}&-\boldsymbol{R}(k)e^{-2ixk}\\0_{q\times p}&I_{q}\end{bmatrix}
\begin{bmatrix}I_{p}&0_{p\times q}\\ \boldsymbol{R}^{\dag}(k)e^{2ixk}&I_{q}\end{bmatrix}.
\end{align}
We define two nilpotent matrices $w_{x}^{+}$ and $w_{x}^{-}$ by
\begin{equation}
w_{x}^{+}:=v_{x+}-I_{p+q}, \quad w_{x}^{-}:=I_{p+q}-v_{x-}.
\end{equation}
According to the classical RH theory, it is sufficient for this proposition to prove that the following Beals-Coifman integral
equation can be solved uniquely
\begin{equation}\label{nu}
\nu(x,k)=I_{p+q}+(\mathcal{C}_{w_{x}}\nu)(x,k),
\end{equation}
where
\begin{equation}\label{cw}
\mathcal{C}_{w_{x}}\nu=\mathcal{C}_{\mathbb{R}}^{+}(\nu w_{x}^{-})+\mathcal{C}_{\mathbb{R}}^{-}(\nu w_{x}^{+}),
\end{equation}
and
\begin{equation}\label{bbb}
\nu=M_{+}(I_{p+q}+w_{x}^{+})^{-1}=M_{-}(I_{p+q}-w_{x}^{-})^{-1}.
\end{equation}
The matrices $w_{x}^{+}$ and $w_{x}^{-}$ allow the uniform rational approximations
since $H^{\frac{1}{2}+\varepsilon}(\mathbb{R})\subset C_{0}(\mathbb{R})$, for every $\varepsilon>0$.
Furthermore, the operator $(I-\mathcal{C}_{w_{x}})$ is a Fredholm operator and has zero Fredholm index follow from the Propositions 4.1 and 4.2 in \cite{Zhou1989b}.

From \eqref{vk}, we have
\begin{equation}
v(k)+v^{\dag}(k)=2\begin{bmatrix}I_{p}-\boldsymbol{R}(k)\boldsymbol{R}^{\dag}(k)&0_{p\times q}\\ 0_{q\times p}&I_{q}\end{bmatrix}.
\end{equation}
By \eqref{2+s-1} and \eqref{ca1}, we obtain
\begin{equation*}
\begin{aligned}
I_{p}- \boldsymbol{R}\boldsymbol{R}^{\dag}&=I_{p}-\boldsymbol{B}\boldsymbol{D}^{-1}\boldsymbol{C}\boldsymbol{A}^{-1}=(\boldsymbol{A}-\boldsymbol{B}\boldsymbol{D}^{-1}\boldsymbol{C})\boldsymbol{A}^{-1}\\
&=(\boldsymbol{A}-\boldsymbol{B}\boldsymbol{B}^{\dag}(\boldsymbol{A}^{-1})^{\dag})\boldsymbol{A}^{-1}=(\boldsymbol{A}\boldsymbol{A}^{\dag}-\boldsymbol{B}\boldsymbol{B}^{\dag})(\boldsymbol{A}^{-1})^{\dag}\boldsymbol{A}^{-1}\\
&=(\boldsymbol{A}^{-1})^{\dag}\boldsymbol{A}^{-1},
\end{aligned}
\end{equation*}
then $v(k)+v^{\dag}(k)$ can be decomposed as
\begin{equation}
v(k)+v^{\dag}(k)=2\begin{bmatrix}(\boldsymbol{A}^{-1})^{\dag}&0_{p\times q}\\ 0_{q\times p}&I_{q}\end{bmatrix}\begin{bmatrix}\boldsymbol{A}^{-1}&0_{p\times q}\\ 0_{q\times p}&I_{q}\end{bmatrix}.
\end{equation}
Thus $v(k)+v^{\dag}(k)$ is positive definite on $\mathbb{R}$. It follows from Theorem 9.3 in \cite{Zhou1989b} that $\ker_{L^{2}(\mathbb{R})}(I-\mathcal{C}_{w_{x}})=0$, then $(I-\mathcal{C}_{w_{x}})$ is invertible in $L^{2}(\mathbb{R})$ by  Fredholm theory. Moreover, the solution to the RH problem \ref{RH1} is given by
\begin{equation}\label{mm}
\begin{aligned}
M(x,k)&=I_{p+q}+\mathcal{C}_{\mathbb{R}}\left(\nu(w_{x}^{+}+w_{x}^{-})\right)(x,k)\\
&=I_{p+q}+\frac{1}{2\pi i}\int_{\mathbb{R}}\frac{\nu(x,s)\left(w_{x}^{+}(s)+w_{x}^{-}(s)\right)}{s-k}ds.
\end{aligned}
\end{equation}
\end{proof}
\subsection{Estimates of the reconstructed potential}\label{s35}
According to the fact that $M(x,k)$ satisfies \eqref{2.3} and using the asymptotic expansion of $M(x,k)$ it follows that the potential $\boldsymbol{Q}(x)$ can be recovered by the following formula
\begin{equation}\label{re0}
\boldsymbol{Q}(x)=\lim_{k\rightarrow\infty}2ikM_{12}(x,k),
\end{equation}
where the ``12" subscript denotes the (1,2) entry of the $2\times2$ block matrix. From \eqref{mm}, the above reconstruction formula can be rewritten as
\begin{equation}\label{r1}
\boldsymbol{Q}(x)=\left(-\frac{1}{\pi}\int_{\mathbb{R}}\nu(x,k)\left(v_{x+}(k)-v_{x-}(k)\right)dk\right)_{12}.
\end{equation}

We rewrite the Beals-Coifman integral operator $\mathcal{C}_{w_{x}}$ in \eqref{cw} associated with \eqref{r1} in the equivalent form
\begin{equation}\label{cw1}
\mathcal{C}_{v_{x}}\phi:=\mathcal{C}_{\mathbb{R}}^{+}\left(\phi\left(I_{p+q}-v_{x-}\right)\right)+\mathcal{C}_{\mathbb{R}}^{-}\left(\phi(v_{x+}-I_{p+q})\right).
\end{equation}
Denote $v\in Y$ if $v_{\pm}(k)-I_{p+q}\in H^{\frac{1}{2}+\varepsilon}(\mathbb{R})$ and $v\in Z$ if $v_{\pm}(k)-I_{p+q}\in H^{1,1}(\mathbb{R})$. In order to establish the boundedness of $\left\|(I-\mathcal{C}_{v_{x}})^{-1}\right\|_{L^{2}\rightarrow L^{2}}$ and the Lipschitz continuity of the inverse scattering map, we give the following Lemma to characterize the uniform resolvent bound.
\begin{lemma}\label{urb}
Suppose that $v\in Z$, then for fixed $a\in\mathbb{R}$ and any bounded subset $B$ of $Z$, we have
\begin{equation}
\sup_{v\in B}\sup_{x\in[a,\infty)}\|(I-\mathcal{C}_{v_{x}})^{-1}\|_{\mathscr{B}\left(L^{2}\right)}<\infty,
\end{equation}
and the map
\begin{equation*}
v\mapsto\left\{x\mapsto (I-\mathcal{C}_{v_{x}})^{-1}\right\}
\end{equation*}
is Lipschitz continuous from $Z$ to $C\left([a,\infty);\mathscr{B}\left(L^{2}\right)\right)$.
\end{lemma}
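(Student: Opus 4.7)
The plan is to combine the pointwise invertibility already furnished by Proposition \ref{prh1} with two uniform estimates on the Beals--Coifman operator: first, operator-norm decay of $\mathcal{C}_{v_x}$ as $x\to+\infty$, so that a Neumann series handles the tail $[N,\infty)$; and second, joint continuity of $(x,v)\mapsto\mathcal{C}_{v_x}$ in $\mathscr{B}(L^2)$, so that a compactness argument handles $[a,N]$. The second resolvent identity will then transfer the uniform resolvent bound to Lipschitz dependence on $v$.

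First I would observe that $H^{1,1}(\mathbb{R})\hookrightarrow H^{1/2+\varepsilon}(\mathbb{R})$, so every $v\in Z$ meets the hypothesis of Proposition \ref{prh1} and $(I-\mathcal{C}_{v_x})^{-1}$ exists in $\mathscr{B}(L^2)$ for each individual $x\in\mathbb{R}$. Next I would exploit the explicit $x$-dependence coming from the triangular factorization \eqref{vv}: the off-diagonal blocks of $w_x^{\pm}$ carry oscillating factors $e^{\pm 2ixk}$ while the diagonal blocks vanish, so the problem reduces to the Zhou-type oscillation estimate
\begin{equation*}
\bigl\|\mathcal{C}_{\mathbb{R}}^{+}\bigl(\phi\,e^{-2ixk}r(k)\bigr)\bigr\|_{L^{2}}+\bigl\|\mathcal{C}_{\mathbb{R}}^{-}\bigl(\phi\,e^{2ixk}r(k)\bigr)\bigr\|_{L^{2}}\le \eta(x)\,\|\phi\|_{L^{2}},
\end{equation*}
with $\eta(x)\to 0$ as $x\to+\infty$ uniformly for $r=\boldsymbol{R}$ in bounded subsets of $H^{1,1}(\mathbb{R})$. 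Since $\mathcal{C}_{\mathbb{R}}^{\pm}$ are Fourier multipliers by $\chi_{\mathbb{R}^{\pm}}$, multiplication by $e^{\mp 2ixk}$ shifts the Fourier support of $r$ out of the spectral window selected by the projector, and the $H^{1}$-smoothness of $\boldsymbol{R}$ supplies the $L^{2}$-decay of the relevant tail of $\hat{r}$. This yields $\sup_{v\in B}\|\mathcal{C}_{v_x}\|_{L^{2}\to L^{2}}\to 0$ as $x\to+\infty$, and this step is the technical heart of the proof.

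With the decay in hand, I would choose $N=N(B)\ge a$ such that $\sup_{v\in B,\,x\ge N}\|\mathcal{C}_{v_x}\|_{L^{2}\to L^{2}}\le\tfrac{1}{2}$, so that the Neumann series delivers $\|(I-\mathcal{C}_{v_x})^{-1}\|\le 2$ on $B\times[N,\infty)$. On the compact box $B\times[a,N]$, pointwise invertibility from Proposition \ref{prh1} combined with the joint continuity of $(x,v)\mapsto\mathcal{C}_{v_x}$ in $\mathscr{B}(L^{2})$, which follows from the boundedness of $\mathcal{C}_{\mathbb{R}}^{\pm}$ on $L^{2}$ together with the embedding $H^{1,1}\hookrightarrow L^{\infty}$ and an equicontinuity argument for the phase factors $e^{\pm 2ixk}$, yields a uniform bound by a standard covering argument. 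The union of the two regimes establishes the first assertion.

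The Lipschitz statement then follows from the second resolvent identity
\begin{equation*}
(I-\mathcal{C}_{v_x})^{-1}-(I-\mathcal{C}_{\tilde{v}_{x}})^{-1}=(I-\mathcal{C}_{v_x})^{-1}\bigl(\mathcal{C}_{v_x}-\mathcal{C}_{\tilde{v}_{x}}\bigr)(I-\mathcal{C}_{\tilde{v}_{x}})^{-1},
\end{equation*}
once I bound $\|\mathcal{C}_{v_x}-\mathcal{C}_{\tilde{v}_{x}}\|_{L^{2}\to L^{2}}\lesssim\|v-\tilde{v}\|_{Z}$ uniformly in $x$: the differences $w_x^{\pm}-\tilde{w}_{x}^{\pm}$ are the off-diagonal blocks of $v_{\pm}-\tilde{v}_{\pm}$ multiplied by phase factors of modulus one, so $\|w_x^{\pm}-\tilde{w}_{x}^{\pm}\|_{L^{\infty}}\lesssim\|v-\tilde{v}\|_{L^{\infty}}\lesssim\|v-\tilde{v}\|_{Z}$ with an $x$-independent constant. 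The main obstacle remains the oscillation estimate: the naive $L^{\infty}$ bound on $w_x^{\pm}$ does not decay in $x$, so one genuinely needs the interplay between the Hardy-space structure of $\mathcal{C}_{\mathbb{R}}^{\pm}$ and the $H^{1}$ smoothness of the reflection coefficient.
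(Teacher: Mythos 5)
The overall architecture you propose (pointwise invertibility from Proposition \ref{prh1}, a decay estimate that makes a Neumann series work for large $x$, a compactness argument on the remaining finite interval, and the second resolvent identity for the Lipschitz statement) is close in spirit to what the paper does via Proposition B.1 of \cite{Jenkins}. But the step you yourself identify as the technical heart is false, and this is a genuine gap, not a detail.

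The claimed oscillation estimate
$\bigl\|\mathcal{C}_{\mathbb{R}}^{+}\bigl(\phi\,e^{-2ixk}r(k)\bigr)\bigr\|_{L^{2}}\le\eta(x)\|\phi\|_{L^{2}}$ with $\eta(x)\to0$ cannot hold: take $\phi(k)=e^{2ixk}\psi(k)$ for a fixed $\psi\in L^{2}$. Then $\phi\,e^{-2ixk}r=\psi r$ is independent of $x$, while $\|\phi\|_{L^{2}}=\|\psi\|_{L^{2}}$, so the ratio $\|\mathcal{C}_{\mathbb{R}}^{+}(\phi e^{-2ixk}r)\|_{L^{2}}/\|\phi\|_{L^{2}}$ is bounded below uniformly in $x$ whenever $\mathcal{C}_{\mathbb{R}}^{+}(\psi r)\neq0$. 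Your Fourier-support heuristic fails because $\widehat{\phi\,e^{-2ix\cdot}r}=\hat\phi\ast\widehat{e^{-2ix\cdot}r}$, and since $\hat\phi$ ranges over all of $L^{2}$ it can be translated so as to undo the shift by $2x$ and land back inside the half-line selected by the projector. Consequently $\|\mathcal{C}_{v_{x}}\|_{\mathscr{B}(L^{2})}$ does \emph{not} tend to $0$; what decays is the vector $\mathcal{C}_{v_{x}}I_{p+q}$ (Lemma \ref{fou}), not the operator norm. This is precisely why the paper (following Zhou) introduces the composed operator
\begin{equation*}
T_{v_{x}}\phi=\mathcal{C}_{\mathbb{R}}^{+}\bigl(\mathcal{C}_{\mathbb{R}}^{-}\phi(v_{x+}-v_{x-})\bigr)(I_{p+q}-\breve{v}_{x-})+\mathcal{C}_{\mathbb{R}}^{-}\bigl(\mathcal{C}_{\mathbb{R}}^{+}\phi(v_{x+}-v_{x-})\bigr)(\breve{v}_{x+}-I_{p+q}),
\end{equation*}
with $\breve{v}_{\pm}=(v_{\pm})^{-1}$, for which the inner projection first localizes $\hat\phi$ to a half-line so that the subsequent modulation genuinely pushes the support out of the window of the outer projection; one then has $\|T_{v_{x}}\|\to0$ (by rational approximation) and recovers the resolvent from the factorization $I-T_{v_{x}}=(I-\mathcal{C}_{v_{x}})(I-\mathcal{C}_{\breve{v}_{x}})$, i.e.\ $(I-\mathcal{C}_{v_{x}})^{-1}=(I-\mathcal{C}_{\breve{v}_{x}})(I-T_{v_{x}})^{-1}$. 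Your Neumann-series step must be run on $T_{v_{x}}$, not on $\mathcal{C}_{v_{x}}$.

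A secondary issue: $B$ is only a \emph{bounded} subset of $Z$, hence not compact, so ``the compact box $B\times[a,N]$'' is not literally available. The uniformity over $B$ (both for the resolvent bound and for the rate at which the relevant operator norm decays, which depends on the rational approximation of $v$ and is not uniform over merely bounded sets) is obtained in the paper by exploiting the \emph{compactness} of the embedding $H^{1,1}(\mathbb{R})\hookrightarrow H^{1/2+\varepsilon}(\mathbb{R})$ together with the continuity of $v\mapsto\mathcal{C}_{v_{x}}$ in the $Y$-topology; this is packaged in Proposition B.1 of \cite{Jenkins}. Your final paragraph (second resolvent identity plus the uniform-in-$x$ bound $\|\mathcal{C}_{v_{x}}-\mathcal{C}_{\tilde v_{x}}\|\lesssim\|v-\tilde v\|_{Z}$) is correct and matches the paper once the uniform resolvent bound has been established by the corrected route.
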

\begin{proof}
The embedding
$H^{1,1}(\mathbb{R})\hookrightarrow H^{\frac{1}{2}+\varepsilon}(\mathbb{R})$
is compact, so we only need to verify the three conditions of the Proposition B.1 in \cite{Jenkins}

(i) It clearly follows from \eqref{cw1} that the map $(v,x)\mapsto\mathcal{C}_{v_{x}}$ is continuous from $Y\times\mathbb{R}$ to $\mathscr{B}(L^{2}(\mathbb{R}))$ and the estimate
\begin{equation*}
\sup_{x\in\mathbb{R}}\|\mathcal{C}_{v_{x}}-\mathcal{C}_{v_{x}'}\|_{\mathscr{B}(L^{2})}\leq\|v-v'\|_{Y}
\end{equation*}
holds.

(ii) For all $x\in\mathbb{R}$ and $v\in Y$, the existence of the resolvent $(I-\mathcal{C}_{v_{x}})^{-1}$ is guaranteed by Proposition \ref{prh1}.

(iii) Finally, let us prove that for each $v\in Y$, the estimate
\begin{equation}\label{iii}
\sup_{x\in[a,+\infty)}\|(I-\mathcal{C}_{v_{x}})^{-1}\|_{\mathscr{B}(L^{2})}<\infty
\end{equation}
holds. Let $\breve{v}_{\pm}\triangleq(v_{\pm})^{-1}$, then an operator $\mathcal{C}_{\breve{v}_{x}}$ similar to operator $\mathcal{C}_{v_{x}}$ can be defined as
\begin{equation*}
\mathcal{C}_{\breve{v}_{x}}\phi=\mathcal{C}_{\mathbb{R}}^{+}\left(\phi\left(I_{p+q}-\breve{v}_{x-}\right)\right)+\mathcal{C}_{\mathbb{R}}^{-}\left(\phi\left(\breve{v}_{x+}-I_{p+q}\right)\right).
\end{equation*}
Further, we define operator $T_{v_{x}}$ as
\begin{equation*}
T_{v_{x}}\phi:=\mathcal{C}_{\mathbb{R}}^{+}\left(\mathcal{C}_{\mathbb{R}}^{-}\phi(v_{x+}-v_{x-})\right)(I_{p+q}-\breve{v}_{x-})+\mathcal{C}_{\mathbb{R}}^{-}\left(\mathcal{C}_{\mathbb{R}}^{+}\phi(v_{x+}-v_{x-})\right)(\breve{v}_{x+}-I_{p+q}).
\end{equation*}
A direct calculation using the properties of the Cauchy projection operators $\mathcal{C}_{\mathbb{R}}^{\pm}$ yields
\begin{equation*}
I-T_{v_{x}}=(I-\mathcal{C}_{v_{x}})(I-\mathcal{C}_{\breve{v}_{x}}),
\end{equation*}
then
\begin{equation*}
(I-\mathcal{C}_{v_{x}})^{-1}=(I-\mathcal{C}_{\breve{v}_{x}})(I-T_{v_{x}})^{-1}.
\end{equation*}
It follows from the rational approximation in \cite{Zhou1989b} that
\begin{equation*}
\lim_{x\rightarrow +\infty}\|T_{v_{x}}\|_{L^{2}\rightarrow L^{2}}=0.
\end{equation*}
Choose $b\in\mathbb{R}$ such that $\forall x\geq b$, $\|T_{v_{x}}\|_{L^{2}\rightarrow L^{2}}<\frac{1}{2}$. Therefore, the operator $I-T_{v_{x}}$ is invertible for every $x\geq b$ and a bound on the inverse operator is given by
\begin{equation*}
\sup_{x\in[b,+\infty)}\|(I-T_{v_{x}})^{-1}\|_{L^{2}\rightarrow L^{2}}\leq \sum_{n=0}^{\infty}\frac{1}{2^{n}}.
\end{equation*}
Clearly, $\|(I-\mathcal{C}_{\breve{v}_{x}})\|_{L^{2}\rightarrow L^{2}}$ is bounded uniformly in $x$. Then for each $v\in Y$, we have
\begin{equation*}
\sup_{x\in[b,+\infty)}\|(I-\mathcal{C}_{v_{x}})^{-1}\|_{L^{2}\rightarrow L^{2}}<\infty.
\end{equation*}
When $a<b$, the estimate \eqref{iii} still holds because $\|(I-\mathcal{C}_{v_{x}})^{-1}\|_{L^{2}\rightarrow L^{2}}$ is bounded uniformly in $x\in[a,b]$.
\end{proof}

Before giving the estimates of the reconstructed potential, we first show that $M(x,t,k)$ is a fundamental solution of the Lax equations \eqref{2.1} and \eqref{2.2}.
\begin{proposition}\label{lax}
Suppose that $M(x,t,k)$ solves the RH problem \ref{RH1}, $U(x,t)$ and $P(x,t,k)$ are given by
\begin{equation}\label{qq1}
\begin{aligned}
U(x,t)&=-\frac{1}{2\pi}\operatorname{ad}\sigma_{3}\left(\int_{\mathbb{R}}\nu(x,t,k)\left(v_{x+}(t,k)-v_{x-}(t,k)\right)dk\right)\\
&=\begin{bmatrix}0_{p\times p}&\boldsymbol{Q}(x,t)\\ \boldsymbol{Q}^{\dag}(x,t)&0_{q\times q}\end{bmatrix},
\end{aligned}
\end{equation}
and
\begin{equation}\label{p}
P(x,t,k)=2kU(x,t)+i\sigma_{3}U_{x}(x,t)-i\sigma_{3}U^{2}(x,t),
\end{equation}
respectively, then $M(x,t,k)$ satisfies the Lax pair \eqref{x1}-\eqref{t1}.
\end{proposition}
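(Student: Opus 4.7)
The plan is to verify that $M(x,t,k)$ solves the two Lax equations by exhibiting $\Phi_1:=M_x+ik\operatorname{ad}\sigma_3 M - UM$ and $\Phi_2:=M_t+2ik^2\operatorname{ad}\sigma_3 M - PM$ as solutions of a trivial (homogeneous, vanishing-at-infinity) RH problem, which must therefore be identically zero. This is the standard ``dressing'' argument once one has unique solvability of RH Problem \ref{RH1}, which is already established in Proposition \ref{prh1}.

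The first step is to obtain an asymptotic expansion $M(x,t,k)=I_{p+q}+M_1(x,t)k^{-1}+M_2(x,t)k^{-2}+\mathcal{O}(k^{-3})$ as $k\to\infty$ off the real axis, together with uniform smoothness of the coefficients $M_j$ in $(x,t)$. This comes from the integral representation \eqref{mm} by expanding $(s-k)^{-1}$ in powers of $k^{-1}$ and using the decay and smoothness of $\nu(v_{x+}-v_{x-})$ ensured by Lemma \ref{urb} and Proposition \ref{ct1}. In particular, matching the reconstruction formula \eqref{r1} to the definition \eqref{qq1} identifies $U(x,t)=i\operatorname{ad}\sigma_3 M_1(x,t)$, since $\operatorname{ad}\sigma_3 M_1$ picks out precisely the off-diagonal blocks.

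Next I turn to $\Phi_1$. Since both $M$ and $U$ are analytic in $k\in\mathbb{C}\setminus\mathbb{R}$, so is $\Phi_1$. Differentiating the jump relation $M_+=M_-v_x$ in $x$ and using $(v_x)_x=-ik[\sigma_3,v_x]$, a direct algebraic manipulation gives $(M_x+ik\operatorname{ad}\sigma_3 M)_+ = (M_x+ik\operatorname{ad}\sigma_3 M)_-\,v_x$, so $\Phi_1$ has the same jump $v_x$ as $M$. Using the expansion from Step 1,
\begin{equation*}
M_x+ik\operatorname{ad}\sigma_3 M - UM = i\operatorname{ad}\sigma_3 M_1 - U + \mathcal{O}(k^{-1}) = \mathcal{O}(k^{-1}),
\end{equation*}
so $\Phi_1 M^{-1}$ has no jump across $\mathbb{R}$ and decays at $\infty$; the unique-solvability argument from Proposition \ref{prh1} (equivalently, Liouville's theorem applied to the entire function $\Phi_1 M^{-1}$) then forces $\Phi_1\equiv 0$, giving \eqref{x1}. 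The $O(k^0)$ coefficient of the just-established $x$-Lax equation additionally yields $i\operatorname{ad}\sigma_3 M_2 = U - M_{1,x}$, which will be needed below.

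The $t$-equation is handled by the same scheme with $\Phi_2$. Analyticity is immediate. For the jump, $v_x(t,k)=e^{-ixk\operatorname{ad}\sigma_3-2itk^2\operatorname{ad}\sigma_3}v(k)$ from \eqref{vt1}, so $(v_x)_t = -2ik^2[\sigma_3,v_x]$, and the same algebraic manipulation as in Step 3 shows $\Phi_2$ has jump $v_x$. The main obstacle is the behavior at infinity: because of the $k^2$ prefactor, I must match coefficients at orders $k^1$ and $k^0$, not just at $k^{-1}$. The order-$k$ coefficient vanishes thanks to $U=i\operatorname{ad}\sigma_3 M_1$, while the order-$k^0$ coefficient
\begin{equation*}
2i\operatorname{ad}\sigma_3 M_2 - 2U M_1 - i\sigma_3 U_x + i\sigma_3 U^2
\end{equation*}
must be shown to equal zero; here one substitutes $i\operatorname{ad}\sigma_3 M_2 = U-M_{1,x}$ from the preceding remark and uses the block structure of $U$, $\sigma_3$ and the identity $M_{1,x}=U-i\operatorname{ad}\sigma_3 M_2$ split into diagonal/off-diagonal blocks to cancel everything, which is precisely the identity that determined the form \eqref{p} of $P$ in the first place. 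Once this is done, $\Phi_2 M^{-1}$ is entire and vanishes at infinity, hence is identically zero, proving \eqref{t1}.
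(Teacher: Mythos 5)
Your proposal follows essentially the same route as the paper: define the operators $\Phi_1,\Phi_2$ (the paper's $LM,NM$), check that they inherit the jump $v_x$ by differentiating the jump relation, expand at $k\to\infty$ to see they are $\mathcal{O}(k^{-1})$ using $U=i\operatorname{ad}\sigma_3(M_1)$ and the order-$k^{0}$ cancellation for $P$, and invoke unique solvability of the homogeneous RH problem to conclude they vanish. One small slip worth fixing: the relation you extract from the established $x$-equation is its $\mathcal{O}(k^{-1})$ (not $\mathcal{O}(k^{0})$) coefficient and reads $i\operatorname{ad}\sigma_3(M_2)=UM_1-M_{1,x}$ rather than $U-M_{1,x}$; with that correction the order-$k^{0}$ term of $\Phi_2$ reduces to $-2M_{1,x}=i\sigma_3U_x-i\sigma_3U^2$, which is exactly the diagonal/off-diagonal block computation the paper carries out.
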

\begin{proof}
Let us first define two linear operators $L$ and $N$ by
\begin{align}
LM&:=M_{x}+ik\operatorname{ad}\sigma_{3}(M)-UM,\label{l}\\
NM&:=M_{t}+2ik^{2}\operatorname{ad}\sigma_{3}(M)-kP_{1}M-P_{2}M.\label{n}
\end{align}
Differentiating both sides of \eqref{vx} with respect to $x$ and $t$ respectively, we obtain
\begin{align*}
\frac{\partial M_{+}}{\partial x}+ik\operatorname{ad}\sigma_{3}(M_{+})&=\left(\frac{\partial M_{-}}{\partial x}+ik\operatorname{ad}\sigma_{3}(M_{-})\right)v_{x},\\
\frac{\partial M_{+}}{\partial t}+2ik^{2}\operatorname{ad}\sigma_{3}(M_{+})&=\left(\frac{\partial M_{-}}{\partial t}+2ik^{2}\operatorname{ad}\sigma_{3}(M_{-})\right)v_{x},
\end{align*}
which imply that $L$ and $N$ satisfy \eqref{vx}, namely,
\begin{align}
\left(LM\right)_{+}&=\left(LM\right)_{-}v_{x},\label{ll}\\
\left(NM\right)_{+}&=\left(NM\right)_{-}v_{x}.\label{nn}
\end{align}
Substituting the asymptotic expansion
\begin{equation}\label{exp}
M(x,t,k)=I_{p+q}+\frac{M_{1}(x,t)}{k}+\frac{M_{2}(x,t)}{k^{2}}+O(\frac{1}{k^{3}}), \quad k\rightarrow \infty,
\end{equation}
into \eqref{l} and \eqref{n} respectively, we have
\begin{align*}
&LM=i\operatorname{ad}\sigma_{3}(M_{1})-U+O(\frac{1}{k}),\\
&NM=k\left(2i\operatorname{ad}\sigma_{3}(M_{1})-P_{1}\right)+\left(2i\operatorname{ad}\sigma_{3}(M_{2})-P_{1}M_{1}-P_{2}\right)+O(\frac{1}{k}).
\end{align*}
Actually, according to \eqref{re0} and \eqref{r1}, we can rewrite \eqref{qq1} in the equivalent form
\begin{equation}\label{aa}
U=i\operatorname{ad}\sigma_{3}(M_{1}),
\end{equation}
then $LM=O(\frac{1}{k})$ as $k\rightarrow \infty$. Combining this result and \eqref{ll}, we see that $LM$ satisfies the homogeneous version of the RH problem \ref{RH1}, thus $LM=0$ from the process of proving Proposition \ref{prh1}.

Clearly, the matrix function $P(x,t,k)$ given by \eqref{p} can be decomposed into
\begin{equation*}
P(x,t,k)=kP_{1}(x,t)+P_{2}(x,t),
\end{equation*}
where $P_{1}(x,t)=2U(x,t)$ and $P_{2}(x,t)=i\sigma_{3}U_{x}(x,t)-i\sigma_{3}U^{2}(x,t)$. It is straightforward to obtain
\begin{equation}\label{p1}
P_{1}(x,t)=2i\operatorname{ad}\sigma_{3}(M_{1})
\end{equation}
from equality \eqref{aa}. Substituting the expansion \eqref{exp} into $LM=0$ and considering the $O(\frac{1}{k})$ term, we have
\begin{equation}\label{aaa}
M_{1x}+ik\operatorname{ad}\sigma_{3}(M_{2})-UM_{1}=0.
\end{equation}
Let $M_{1}=M_{1}^{(off)}+M_{1}^{(diag)}$, where the superscripts represent the off-diagonal and the diagonal parts of the block matrix $M_{1}$. From \eqref{aa} and \eqref{aaa}, we have
\begin{equation*}
\begin{aligned}
M_{1x}^{(off)}&=-\frac{i}{2}\sigma_{3}U_{x},\\
M_{1x}^{(diag)}&=UM_{1}^{(off)}=\frac{i}{2}\sigma_{3}U^{2},
\end{aligned}
\end{equation*}
then $P_{2}(x,t)$ can be rewritten in the form
\begin{equation}\label{p2}
P_{2}(x,t)=-2M_{1x}=2ik\operatorname{ad}\sigma_{3}
(M_{2})-PM_{1}.
\end{equation}
Therefore, $NM=O(\frac{1}{k})$ as $k\rightarrow \infty$ by \eqref{p1} and \eqref{p2}. Similarly, it follows from \eqref{nn} and Proposition \ref{prh1} that $NM=0$. It is now obvious that the lemma holds.
\end{proof}

\begin{lemma}\label{fou}
(see \cite{Zhou1998}).
If $\boldsymbol{R}\in H^{1}(\mathbb{R})$, then for $x\geq0$, we have
\begin{align}
&\left\|\mathcal{C}_{\mathbb{R}}^{+}(\boldsymbol{R}e^{-2ixk})\right\|_{L^{2}}\leq\frac{1}{\sqrt{1+x^{2}}}\|\boldsymbol{R}\|_{H^{1}},\label{fou1}\\
&\left\|\mathcal{C}_{\mathbb{R}}^{-}(\boldsymbol{R}e^{2ixk})\right\|_{L^{2}}\leq\frac{1}{\sqrt{1+x^{2}}}\|\boldsymbol{R}\|_{H^{1}}.\label{fou2}
\end{align}
\end{lemma}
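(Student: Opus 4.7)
The plan is to reduce both inequalities to a single weighted $L^{2}$ tail bound via Plancherel's theorem, exploiting the fact that the Cauchy projectors $\mathcal{C}_{\mathbb{R}}^{\pm}$ become multiplications by characteristic functions of half-lines on the Fourier side, and that multiplication by $e^{\mp 2ixk}$ simply translates the Fourier variable.

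First I would fix the convention $\hat{f}(\xi)=\int_{\mathbb{R}}f(k)e^{-ik\xi}\,dk$. The Sokhotski-Plemelj identity together with the Paley-Wiener characterization of Hardy spaces then identifies $\mathcal{C}_{\mathbb{R}}^{+}$ and $\mathcal{C}_{\mathbb{R}}^{-}$ with the orthogonal projections onto the Hardy spaces of the upper and lower half-planes, which on the Fourier side is exactly the restriction of $\hat{f}$ to one of the two half-lines $\{\xi>0\}$ or $\{\xi<0\}$. Since the Fourier transform of $\boldsymbol{R}(k)e^{-2ixk}$ equals $\hat{\boldsymbol{R}}(\xi+2x)$, applying Plancherel and changing variables by $\eta=\xi+2x$ produces
\begin{equation*}
\bigl\|\mathcal{C}_{\mathbb{R}}^{+}(\boldsymbol{R}e^{-2ixk})\bigr\|_{L^{2}}^{2}=\int_{2x}^{\infty}|\hat{\boldsymbol{R}}(\eta)|^{2}\,d\eta,
\end{equation*}
together with the mirror identity $\bigl\|\mathcal{C}_{\mathbb{R}}^{-}(\boldsymbol{R}e^{2ixk})\bigr\|_{L^{2}}^{2}=\int_{-\infty}^{-2x}|\hat{\boldsymbol{R}}(\eta)|^{2}\,d\eta$.

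The second step is an elementary weighted tail inequality. For $\eta\geq 2x\geq 0$ one has $1+\eta^{2}\geq 1+4x^{2}\geq 1+x^{2}$, so
\begin{equation*}
\int_{2x}^{\infty}|\hat{\boldsymbol{R}}(\eta)|^{2}\,d\eta\leq\frac{1}{1+x^{2}}\int_{2x}^{\infty}(1+\eta^{2})|\hat{\boldsymbol{R}}(\eta)|^{2}\,d\eta\leq\frac{1}{1+x^{2}}\|\boldsymbol{R}\|_{H^{1}}^{2},
\end{equation*}
and the same bound holds for the integral over $(-\infty,-2x]$ by symmetry in $\eta\mapsto-\eta$. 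Taking square roots yields \eqref{fou1} and \eqref{fou2}.

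I expect the main obstacle to be bookkeeping rather than analysis: one must align the sign convention chosen for the projectors $\mathcal{C}_{\mathbb{R}}^{\pm}$ with the chosen Fourier convention so that $\mathcal{C}_{\mathbb{R}}^{+}(\boldsymbol{R}e^{-2ixk})$ lands precisely on the half-line $\xi\geq 2x$, where the shift pushes the spectral mass away from the origin and hence produces the desired decay in $x$; under the opposite convention the expression would reduce to $\int_{-\infty}^{2x}|\hat{\boldsymbol{R}}(\eta)|^{2}\,d\eta$, which does not decay as $x\to+\infty$. No matrix-valued subtleties arise, since the Frobenius norm $|\boldsymbol{R}|=\sqrt{\operatorname{tr}(\boldsymbol{R}^{\dag}\boldsymbol{R})}$ and Plancherel's identity reduce the claim entrywise to the scalar statement.
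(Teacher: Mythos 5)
Your proof is correct. The paper itself gives no argument for this lemma (it simply cites Zhou's $L^{2}$-Sobolev bijectivity paper), and your Fourier-side argument --- identifying $\mathcal{C}_{\mathbb{R}}^{\pm}$ with the frequency restrictions to $\{\pm\xi>0\}$, observing that the modulation $e^{\mp 2ixk}$ shifts the spectral support to $\{|\eta|\geq 2x\}$, and then using $1+\eta^{2}\geq 1+x^{2}$ there --- is exactly the standard proof from the cited reference; the only cosmetic point is that, depending on the Plancherel normalization attached to the paper's convention $\|f\|_{H^{1}}=\|\langle\cdot\rangle\hat f\|_{L^{2}}$, your intermediate identity may carry a harmless factor of $\tfrac{1}{2\pi}$, which only strengthens the final bound.
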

\begin{proposition}\label{H11}
If $\boldsymbol{R}\in H^{1,1}(\mathbb{R})$, then $\boldsymbol{Q}\in H^{1,1}(\mathbb{R})$ and the map $\boldsymbol{R}\mapsto \boldsymbol{Q}$ is Lipschitz continuous from $H^{1,1}(\mathbb{R})$ to $H^{1,1}(\mathbb{R})$.
\end{proposition}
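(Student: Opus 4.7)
The plan is to exploit the reconstruction formula \eqref{r1}. A direct block expansion of $\nu(v_{x+}-v_{x-})$ together with the nilpotent structure of $w_x^{\pm}$ reduces the $(1,2)$-block to the single entry $-\nu_{11}\boldsymbol{R}e^{-2ixk}$, so that
\begin{equation*}
\boldsymbol{Q}(x)=\frac{1}{\pi}\int_{\mathbb{R}}\nu_{11}(x,k)\boldsymbol{R}(k)e^{-2ixk}\,dk.
\end{equation*}
Writing $\nu=I_{p+q}+\mu$ decomposes $\boldsymbol{Q}=\boldsymbol{Q}^{(0)}+\boldsymbol{Q}^{(1)}$ into a \emph{linear part} $\boldsymbol{Q}^{(0)}(x)=\tfrac{1}{\pi}\int\boldsymbol{R}(k)e^{-2ixk}dk$ and a \emph{correction} $\boldsymbol{Q}^{(1)}(x)=\tfrac{1}{\pi}\int\mu_{11}(x,k)\boldsymbol{R}(k)e^{-2ixk}dk$. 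The linear part is, up to rescaling, the inverse Fourier transform of $\boldsymbol{R}$; since $H^{1,1}(\mathbb{R})$ is preserved by the Fourier transform, Plancherel immediately gives $\boldsymbol{Q}^{(0)}\in H^{1,1}(\mathbb{R})$ together with a Lipschitz estimate in terms of $\|\boldsymbol{R}\|_{H^{1,1}}$.

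For the correction I would extract from the Beals-Coifman equation $\mu=\mathcal{C}_{v_x}(I_{p+q})+\mathcal{C}_{v_x}\mu$ the reduced identities
\begin{align*}
\mu_{11}(x,k)&=\mathcal{C}_{\mathbb{R}}^{-}\bigl(\mu_{12}(x,\cdot)\boldsymbol{R}^{\dag}e^{2ix\cdot}\bigr)(k),\\
\mu_{12}(x,k)&=-\mathcal{C}_{\mathbb{R}}^{+}\bigl((I_{p}+\mu_{11}(x,\cdot))\boldsymbol{R}\,e^{-2ix\cdot}\bigr)(k),
\end{align*}
which follow from $w_{x}^{-}$ being strictly upper-block-triangular and $w_{x}^{+}$ strictly lower-block-triangular. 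Lemma \ref{fou} then gives $\|\mathcal{C}_{v_x}(I_{p+q})\|_{L^{2}_k}\lesssim \langle x\rangle^{-1}\|\boldsymbol{R}\|_{H^{1}}$ for $x\geq 0$, and composing with the uniform resolvent bound of Lemma \ref{urb} yields $\|\mu(x,\cdot)\|_{L^{2}_k}\lesssim \langle x\rangle^{-1}\|\boldsymbol{R}\|_{H^{1}}$ on $x\geq 0$; the mirror bound on $x\leq 0$ follows from the symmetric half of Lemma \ref{fou}.

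To upgrade into $\boldsymbol{Q}^{(1)}\in L^{2,1}$ I would integrate by parts in $k$ via $xe^{-2ixk}=\tfrac{i}{2}\partial_{k}e^{-2ixk}$, obtaining
\begin{equation*}
x\boldsymbol{Q}^{(1)}(x)=-\frac{i}{2\pi}\int_{\mathbb{R}}\partial_{k}\bigl(\mu_{11}(x,k)\boldsymbol{R}(k)\bigr)e^{-2ixk}\,dk,
\end{equation*}
the boundary terms vanishing since $H^{1,1}(\mathbb{R})\subset C_{0}(\mathbb{R})$. The factor $\partial_{k}\mu_{11}$ is controlled in $L^{2}_k$ uniformly in $x$ by differentiating the Beals-Coifman equation in $k$ and reapplying Lemma \ref{urb}; substituting the identity for $\mu_{11}$ and using the $L^{2}$-boundedness of $\mathcal{C}_{\mathbb{R}}^{-}$, Plancherel in $x$ then yields the desired $L^{2}_x$-bound. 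For $\partial_{x}\boldsymbol{Q}\in L^{2}(\mathbb{R})$ I would avoid differentiating under the integral (which would bring down an unwanted factor of $k$) and instead invoke Proposition \ref{lax}: expanding $M=I_{p+q}+M_{1}/k+M_{2}/k^{2}+O(k^{-3})$ in $LM=0$ and matching the $O(k^{-1})$-terms expresses $\boldsymbol{Q}_{x}$ as an algebraic combination of $\boldsymbol{Q}$, $(M_{1})_{22}$ and $(M_{2})_{12}$, each of which admits an integral representation of the same type as $\boldsymbol{Q}$ and is controlled by the scheme above.

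Lipschitz continuity is then obtained by running every estimate for differences: given $\boldsymbol{R},\tilde{\boldsymbol{R}}\in H^{1,1}(\mathbb{R})$ with corresponding jump data and resolvents, the Lipschitz statement of Lemma \ref{urb} propagates $\|\boldsymbol{R}-\tilde{\boldsymbol{R}}\|_{H^{1,1}}$ through the Beals-Coifman equation and into all subsequent bounds. The principal technical obstacle is the $L^{2,1}$-estimate of $\boldsymbol{Q}^{(1)}$: the best pointwise bound afforded directly by Lemmas \ref{fou} and \ref{urb} is $|\boldsymbol{Q}^{(1)}(x)|\lesssim\langle x\rangle^{-1}$, which is just one power short of $L^{2,1}$, so the integration by parts in $k$ must be combined with the $L^{2}$-mapping properties of the Cauchy projectors in order to convert $k$-oscillation into genuine $x$-decay.
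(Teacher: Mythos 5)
Your reduction of the $(1,2)$-block to $\tfrac{1}{\pi}\int\nu_{11}\boldsymbol{R}e^{-2ixk}dk$, the splitting off of the Fourier-transform term, and the use of Lemma \ref{urb} together with Lemma \ref{fou} to get $\|\mu(x,\cdot)\|_{L^2_k}\lesssim\langle x\rangle^{-1}$ all match the paper's argument (your $\mu$ is the paper's $g=(I-\mathcal{C}_{v_x})^{-1}\mathcal{C}_{v_x}I_{p+q}$ plus the block-off-diagonal first iterate, which drops out of the $(1,2)$ entry exactly as the paper's $f_2$ does). The genuine gap is in how you close the $L^{2,1}$ estimate for $\boldsymbol{Q}^{(1)}$. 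First, your diagnosis that the direct route only gives $\langle x\rangle^{-1}$ is incorrect: since $\mu_{11}=\mathcal{C}_{\mathbb{R}}^{-}(\mu_{12}\boldsymbol{R}^{\dag}e^{2ix\cdot})$ is a boundary value from the lower half-plane, Hardy-space orthogonality lets you replace the factor $\boldsymbol{R}e^{-2ixk}$ in the pairing by $\mathcal{C}_{\mathbb{R}}^{+}(\boldsymbol{R}e^{-2ix\cdot})$ for free, and Lemma \ref{fou} then supplies a \emph{second} factor of $\langle x\rangle^{-1}$, giving $|\boldsymbol{Q}^{(1)}(x)|\lesssim\langle x\rangle^{-2}$ directly. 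This is exactly what the paper does in the estimate of $(f_3)_{12}$, where $(I_{p+q}-v_{x-})$ is replaced by $\mathcal{C}_{\mathbb{R}}^{+}(I_{p+q}-v_{x-})$ before applying Cauchy--Schwarz. Second, the substitute you propose does not work as stated: Plancherel in $x$ cannot be applied to $\int\partial_k(\mu_{11}(x,k)\boldsymbol{R}(k))e^{-2ixk}dk$ because the integrand depends on $x$ through $\mu_{11}$, and when you differentiate the Beals--Coifman equation in $k$ the derivative falls on the oscillatory factors $e^{\mp2ixk}$ inside $w_x^{\pm}$, producing terms like $2ix\,\mathcal{C}_{\mathbb{R}}^{+}((I_p+\mu_{11})\boldsymbol{R}e^{-2ix\cdot})=-2ix\mu_{12}$; the explicit factor of $x$ exactly cancels the $\langle x\rangle^{-1}$ decay, so $\|\partial_k\mu_{11}\|_{L^2_k}$ is only $O(1)$ and your bound stalls at $|x\boldsymbol{Q}^{(1)}(x)|\lesssim 1\notin L^2(\mathbb{R}^{+})$.

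Two smaller points. For $x\leq 0$ the "mirror half of Lemma \ref{fou}" does not rescue the same formula: with the right-normalized solution the relevant projections $\mathcal{C}_{\mathbb{R}}^{+}(\boldsymbol{R}e^{-2ixk})$ and $\mathcal{C}_{\mathbb{R}}^{-}(\boldsymbol{R}^{\dag}e^{2ixk})$ carry no decay as $x\to-\infty$; one must pass to the left-normalized function $\widetilde{M}$ of \eqref{ml}, whose triangular factors have the opposite structure, as the paper does. Your treatment of $\partial_x\boldsymbol{Q}$ via Proposition \ref{lax} and the coefficients $M_1,M_2$ is sound and essentially equivalent to the paper's decomposition $\frac{d}{dx}\int\nu(v_{x+}-v_{x-})dk=H_1+H_2$, so that part stands once the $L^{2,1}$ and half-line issues are repaired.
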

\begin{proof}
For $x\geq0$, by $\nu(x,k)$ satisfying the integral equation \eqref{nu} and the definition of $\mathcal{C}_{v_{x}}$ in \eqref{cw1}, we can decompose the integral in the right-hand side of \eqref{r1} into
\begin{equation}\label{d1}
\int_{\mathbb{R}}\nu(x,k)\left(v_{x+}(k)-v_{x-}(k)\right)dk\triangleq f_{1}(x)+f_{2}(x)+f_{3}(x),
\end{equation}
where
\begin{equation*}
\begin{aligned}
&f_{1}(x)=\int_{\mathbb{R}}\left(v_{x+}(k)-v_{x-}(k)\right)dk,\\
&f_{2}(x)=\int_{\mathbb{R}}\left(\mathcal{C}_{v_{x}}I_{p+q}\right)\left(v_{x+}(k)-v_{x-}(k)\right)dk,\\
&f_{3}(x)=\int_{\mathbb{R}}\left(\mathcal{C}_{v_{x}}(I-\mathcal{C}_{v_{x}})^{-1}\mathcal{C}_{v_{x}}I_{p+q}\right)\left(v_{x+}(k)-v_{x-}(k)\right)dk.
\end{aligned}
\end{equation*}
First, we show that $\boldsymbol{Q}\in L^{2,1}(\mathbb{R}^{+})$. It is easy to see that $f_{1}(x)\in L^{2,1}(\mathbb{R}^{+})$ by Fourier transform and $f_{2}(x)$ makes no contribution to the reconstruction of $\boldsymbol{Q}$ since $f_{2}(x)$ is a block diagonal matrix. For $f_{3}(x)$, we set $g=(I-\mathcal{C}_{v_{x}})^{-1}\mathcal{C}_{v_{x}}I_{p+q}$.
By the expression of $v_{x\pm}$ in \eqref{vv}, we can use Lemmas \ref{urb} and \ref{fou} to estimate
\begin{equation}\label{g}
\|g\|_{L^{2}}\leq\left\|(I-\mathcal{C}_{v_{x}})^{-1}\right\|_{L^{2}\rightarrow L^{2}}\left\|\mathcal{C}_{v_{x}}I_{p+q}\right\|_{L^{2}}
\lesssim\frac{1}{\sqrt{1+x^{2}}}.
\end{equation}
Using Lemma \ref{fou} and \eqref{g}, we can show that
\begin{equation*}
\begin{aligned}
\left|\left(f_{3}\right)_{12}\right|&\leq\left|\int_{\mathbb{R}}\left(\mathcal{C}_{\mathbb{R}}^{-}g(v_{x+}-I_{p+q})\right)(I_{p+q}-v_{x-})dk\right|\\
&=\left|\int_{\mathbb{R}}\left(\mathcal{C}_{\mathbb{R}}^{-}g(v_{x+}-I_{p+q})\right)\mathcal{C}_{\mathbb{R}}^{+}(I_{p+q}-v_{x-})dk\right|\\
&\lesssim\left\|g\right\|_{L^{2}}\left\|\mathcal{C}_{\mathbb{R}}^{+}(I_{p+q}-v_{x-})\right\|_{L^{2}}\\
&\lesssim\frac{1}{1+x^{2}}.
\end{aligned}
\end{equation*}

Next, we prove that $\boldsymbol{Q}\in H^{1}(\mathbb{R}^{+})$.
It follows from \eqref{bbb} and Proposition \ref{lax} that the solution $\nu(x,k)$ of the integral equation \eqref{nu} satisfies the differential equation \eqref{x1}, then, we can computer that
\begin{equation*}
\frac{d}{dx}\left(\nu(v_{x+}-v_{x-})\right)=\left(-ik\operatorname{ad}\sigma_{3}+U\right)\left(\nu(v_{x+}-v_{x-})\right).
\end{equation*}
Integrating the both sides of the above equation, we have
\begin{equation*}
\frac{d}{dx}\int_{\mathbb{R}}\left(\nu(v_{x+}-v_{x-})\right)dk= H_{1}(x)+H_{2}(x),
\end{equation*}
where
\begin{equation*}
\begin{aligned}
&H_{1}(x)=\int_{\mathbb{R}}-i\operatorname{ad}\sigma_{3}\left(\nu e^{-ixk\operatorname{ad \sigma_{3}}}k(v_{+}-v_{-})\right)dk,\\
&H_{2}(x)=\int_{\mathbb{R}}U\nu(v_{x+}-v_{x-})dk.
\end{aligned}
\end{equation*}
The first integral $H_{1}(x)$ can be decomposed into three parts, as in \eqref{d1}. By $R\in H^{1,1}$, an argument similar to the one used in \eqref{d1} shows that $H_{1}\in L^{2}$. For $H_{2}(x)$, it is sufficient to prove $\boldsymbol{Q}\in L^{\infty}$, consequently, $H_{2}\in L^{2}$ follows immediately from the proceeding as in the proof of $\boldsymbol{Q}\in L^{2,1}$.

The estimate for $x\leq0$ can be analogously obtained by considering \eqref{ml} and the Lipschitz continuity of the map follows from the results of Lemma \ref{urb}.
\end{proof}

\subsection{Proof of Theorem \ref{thm} for defocusing case}\label{s36}

Summarizing the results of Sections \ref{s2} and \ref{s3}, we can prove the bijectivity results and existence of global solutions for defocusing $p\times q$ matrix NLS equation \eqref{mnls}.
\begin{proof}
The reflection coefficient $R$ is the main component of $v_{\pm}$ by  definition \eqref{vpm}, so the results of Propositions \ref{pro1} and \ref{H11} indicate that the maps \eqref{D11} and \eqref{I11} are Lipschitz continuous. Given initial date $\boldsymbol{Q}_{0}\in H^{1,1}(\mathbb{R})$, Proposition \ref{ct1} shows that the map $\boldsymbol{Q}_{0}\mapsto v_{\pm}-I_{p+q}$ is Lipschitz continuous from $H^{1,1}(\mathbb{R})$ to $C([-T,T], H^{1,1}(\mathbb{R}))$ for every $T>0$. It follows from Propositions \ref{prh1} and \ref{lax} that the $\boldsymbol{Q}(x,t)$ defined by \eqref{r1} solves \eqref{mnls}. From Proposition \ref{H11}, we have the map $\boldsymbol{Q}_{0}\mapsto \boldsymbol{Q}$ is Lipschitz continuous from $H^{1,1}(\mathbb{R})$ to $C\left([-T,T], H^{1,1}(\mathbb{R})\right)$ for every $T>0$.

The uniqueness of the solution can be obtained from group theory as well as Gronwall's inequality.
Suppose that the initial date $\boldsymbol{Q}_{01}, \boldsymbol{Q}_{02} \in H^{1,1}(\mathbb{R})$, denote the corresponding solution by $\boldsymbol{Q}_{1}$ and $\boldsymbol{Q}_{2}$ respectively, then we have
\begin{equation*}
\boldsymbol{Q}_{1}(t)-\boldsymbol{Q}_{2}(t)=S(t)(\boldsymbol{Q}_{01}-\boldsymbol{Q}_{02})+\int_{0}^{t}S(t-\tau)\left(F(\boldsymbol{Q}_{1})-F(\boldsymbol{Q}_{2})\right)(\tau)d\tau,
\end{equation*}
where $\{S(t)\}_{t\in\mathbb{R}}$ is the group of isometries generated by $i\partial_{xx}$ and $F(\boldsymbol{Q})=-2i\boldsymbol{Q}\boldsymbol{Q}^{\dag}\boldsymbol{Q}$. Then for any $0\leq t\leq T$ ($T>0$), we have
\begin{equation*}
\begin{aligned}
\|\boldsymbol{Q}_{1}(t)-\boldsymbol{Q}_{2}(t)\|_{H^{1}(\mathbb{R};\mathbb{C}^{p\times q})}
\leq&\|\boldsymbol{Q}_{01}-\boldsymbol{Q}_{02}\|_{H^{1}(\mathbb{R};\mathbb{C}^{p\times q})}\\
&+C_{T}\int_{0}^{t}\|\boldsymbol{Q}_{1}(\tau)-\boldsymbol{Q}_{2}(\tau)\|_{H^{1}(\mathbb{R};\mathbb{C}^{p\times q})}d\tau,
\end{aligned}
\end{equation*}
here we use the fact that $H^{1}$ is a Banach Algebra. From the Gronwall's inequality, we have
\begin{equation*}
\|\boldsymbol{Q}_{1}(t)-\boldsymbol{Q}_{2}(t)\|_{H^{1}(\mathbb{R};\mathbb{C}^{p\times q})}
\leq\|\boldsymbol{Q}_{01}-\boldsymbol{Q}_{02}\|_{H^{1}(\mathbb{R};\mathbb{C}^{p\times q})}e^{C_{T}t},
\end{equation*}
if $\boldsymbol{Q}_{01}=\boldsymbol{Q}_{02}$, then $\boldsymbol{Q}_{1}=\boldsymbol{Q}_{2}$ in $[0,T]$. The same is true for $-T\leq t\leq 0$, thus the solution is unique.
\end{proof}

\section{Inverse Scattering Map for Focusing Case}\label{s4}

In this section, we  study the  inverse scattering  transform  and further
prove the  well-posedness  in  $H^{1,1}(\mathbb{R})$  for the
 focusing   matrix NLS equation.

\subsection{Symmetry reduction}\label{s41}
When $\sigma=-1$, from \eqref{Q} we have
\begin{equation}\label{Q22}
U^{\dag}=-U.
\end{equation}
By \eqref{2.3}, \eqref{2+abcd} and the symmetry \eqref{Q22}, we have
\begin{equation}\label{m-1}
(m^{\pm})^{-1}(k)=(m^{\pm})^{\dag}(k^{*})
\end{equation}
and
\begin{equation}\label{s-1}
S^{-1}(k)=S^{\dag}(k^{*}).
\end{equation}
Further using the symmetry relation \eqref{s-1} we have
\begin{equation}\label{ca}
\boldsymbol{C}(k)\boldsymbol{A}^{-1}(k)=-\left(\boldsymbol{B}(k)\boldsymbol{D}^{-1}(k)\right)^{\dag}=-\boldsymbol{R}^{\dag}(k).
\end{equation}
Unlike the case of defocusing, the focusing case allows arbitrary numbers of poles and spectral singularities. This implies that $\boldsymbol{D}(k)$ and $\boldsymbol{A}(k)$ are not always invertible, so we must explore more properties of the matrix scattering date $\boldsymbol{D}(k)$ and $\boldsymbol{A}(k)$ to overcome this difficulty.
\begin{lemma}\label{lad}
Suppose that $\boldsymbol{Q}\in H^{1,1}(\mathbb{R})$, then we have
\begin{equation}\label{Ak}
\lim_{k\rightarrow\infty}\det \boldsymbol{D}(k)=\lim_{k\rightarrow\infty}\det \boldsymbol{A}(k)=1.
\end{equation}
If in addition, $\|U\|_{L^{1}}\ll1$, then $\det \boldsymbol{D}(k)$ and $\det \boldsymbol{A}(k)$ have no zeros in $\mathbb{C}^{-}\cup\mathbb{R}$ and $\mathbb{C}^{+}\cup\mathbb{R}$, respectively.
\end{lemma}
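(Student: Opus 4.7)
The plan is to split the argument into two halves corresponding to the two claims in the statement. Starting from the integral representation \eqref{2+s} and the block form of $U$, one can read off explicit expressions
\begin{align*}
\boldsymbol{A}(k)-I_p &= \int_{\mathbb{R}} e^{2iyk}\,\boldsymbol{Q}(y)\,m_{21}^{-}(y,k)\,dy, \\
\boldsymbol{D}(k)-I_q &= \int_{\mathbb{R}} e^{-2iyk}\,\sigma\boldsymbol{Q}^{\dag}(y)\,m_{12}^{-}(y,k)\,dy,
\end{align*}
valid for $k\in\mathbb{R}$. Since $\boldsymbol{Q}\in H^{1,1}(\mathbb{R})$ and Proposition \ref{lm} yields $m^{-}(y,k)-I_{p+q}$ bounded in $y$ with $L^{2}$ control in $k$, the Riemann--Lebesgue lemma (or a single integration by parts using $\boldsymbol{Q}'\in L^{2}$ and $\partial_k m^{-}\in L^{\infty}_x L^{2}_k$) forces both integrals to vanish as $|k|\to\infty$ along $\mathbb{R}$. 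To promote this to a limit in the closed upper/lower half-planes, I will use that $\boldsymbol{A}(k)$ is analytic and bounded on $\mathbb{C}^{+}\cup\mathbb{R}$ while $\boldsymbol{D}(k)$ is analytic and bounded on $\mathbb{C}^{-}\cup\mathbb{R}$ (inherited from the analyticity of $m_{1}^{-}, m_{2}^{+}$ and $m_{1}^{+}, m_{2}^{-}$ established in Section \ref{s21}), and apply the maximum modulus principle on large semi-disks: a bounded analytic function whose real-axis boundary values tend to zero at infinity must vanish uniformly at infinity throughout the closed half-plane. Continuity of $\det$ then yields $\det\boldsymbol{A}(k), \det\boldsymbol{D}(k)\to 1$, proving \eqref{Ak}.

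For the second claim, under the smallness hypothesis $\|U\|_{L^{1}}\ll 1$, Lemma \ref{lk} applied to \eqref{2+mk} gives, after a routine Neumann-series computation, the pointwise bound
\begin{equation*}
\sup_{(x,k)\in\mathbb{R}^{2}}\bigl|m^{-}(x,k)-I_{p+q}\bigr| \;\lesssim\; \|U\|_{L^{1}}\,e^{\|U\|_{L^{1}}}.
\end{equation*}
Plugging this into the two integral formulas above and using $\boldsymbol{Q}\in L^{1}$, we obtain
\begin{equation*}
\sup_{k\in\mathbb{R}}\bigl|\boldsymbol{A}(k)-I_p\bigr|,\ \sup_{k\in\mathbb{R}}\bigl|\boldsymbol{D}(k)-I_q\bigr| \;\lesssim\; \|U\|_{L^{1}}^{2}\,e^{\|U\|_{L^{1}}},
\end{equation*}
which can be made arbitrarily small. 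Extending to the analytic half-planes exactly as in the first step (bounded analytic + small boundary values + decay at $\infty$ $\Rightarrow$ uniformly small in the closed half-plane by maximum modulus), the matrices $\boldsymbol{A}(k)$ and $\boldsymbol{D}(k)$ remain within a small neighborhood of $I_{p}$ and $I_{q}$ on $\mathbb{C}^{+}\cup\mathbb{R}$ and $\mathbb{C}^{-}\cup\mathbb{R}$, respectively. Since $\det$ is a polynomial in the entries with $\det I_{p}=\det I_{q}=1$, the same transfer gives $|\det\boldsymbol{A}(k)-1|, |\det\boldsymbol{D}(k)-1|\ll 1$ uniformly on the respective closed half-planes, hence neither determinant can vanish there.

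The step I expect to be the main obstacle is precisely the transfer of real-line bounds into the closed half-planes. The naive integral representations carry exponential factors $e^{\pm 2iyk}$ which blow up on parts of the integration domain once $\operatorname{Im}k\neq 0$, so one cannot simply analytically continue the explicit formulas and bound them. The correct viewpoint is to regard $\boldsymbol{A}$ and $\boldsymbol{D}$ as bounded analytic functions \emph{a priori} (through the analytic blocks of $m^{\pm}$) and combine the boundary smallness with the decay at infinity from the first step via the maximum modulus / Phragm\'en--Lindel\"of principle. Once this transfer is in place, every other step reduces to direct Neumann-series estimation already established in Section \ref{s21}.
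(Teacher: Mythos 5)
Your overall strategy matches the paper's: extract $\boldsymbol{A}-I_p$ and $\boldsymbol{D}-I_q$ from the integral representation \eqref{2+s}, get decay as $k\to\infty$ from oscillation, and get non-vanishing of the determinants from a Volterra/Neumann smallness bound when $\|U\|_{L^1}\ll1$. Two points of comparison. First, a slip in your explicit formulas: the conjugation $e^{iyk\operatorname{ad}\sigma_{3}}$ acts trivially on the diagonal blocks, so there is \emph{no} external factor $e^{\pm 2iyk}$ in front of $\boldsymbol{A}(k)-I_p=\int_{\mathbb{R}}\boldsymbol{Q}(y)m_{21}^{-}(y,k)\,dy$ (the paper's \eqref{A}) or the analogous formula for $\boldsymbol{D}$. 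Consequently Riemann--Lebesgue applied to the outer integral, or your proposed single integration by parts, does not work as written; the oscillation lives one level down, inside the Volterra representation of $m_{21}^{-}$ (resp.\ $m_{12}^{-}$), and that is where the paper applies dominated convergence to conclude $m_{21}^{-}(x,k)\to 0$ and then $\boldsymbol{A}(k)\to I_p$. The fix is immediate and in the spirit of what you wrote, so I would not call this a fatal gap, but the formulas need correcting. Second, your main worry --- that the exponentials blow up off the real axis, forcing a Phragm\'en--Lindel\"of transfer --- is unfounded: in the half-plane where $m_{1}^{-}$ (hence $\boldsymbol{A}$) is analytic, the kernel in its Volterra equation has modulus at most $1$, so both the dominated-convergence limit and the pointwise Neumann-series bound $|m^{-}-I_{p+q}|\le e^{\|U\|_{L^1}}-1$ hold \emph{directly} for all $k\in\mathbb{C}^{+}\cup\mathbb{R}$ (and symmetrically for $\boldsymbol{D}$ in $\mathbb{C}^{-}\cup\mathbb{R}$). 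This is why the paper never needs a maximum-modulus argument. Your Lindel\"of-type transfer (bounded analytic, continuous up to the boundary, tending to a limit along both boundary rays, hence uniformly in the closed angle) is a genuine classical theorem and does close the argument, so your route is valid --- it is simply heavier than necessary, and it buys you a proof that would survive even if the interior bounds were unavailable. Your quantitative bound $|\boldsymbol{A}(k)-I_p|\lesssim\|U\|_{L^1}^{2}e^{\|U\|_{L^1}}$ for the second claim is, if anything, more explicit than the paper's appeal to the Lipschitz continuity of Proposition \ref{lmab}.
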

\begin{proof}
Using \eqref{2+s} and \eqref{2.5}, we can calculate that
\begin{equation}\label{A}
\boldsymbol{A}(k)=I_{p}+\int_{\mathbb{R}}\boldsymbol{Q}(y)m_{21}^{-}(y,k)dy,
\end{equation}
and
\begin{equation*}
m_{21}^{-}(x,k)=-\int_{-\infty}^{x}e^{-2i(x-y)k}\boldsymbol{Q}^{\dag}(y)m_{11}^{-}(y,k)dy.
\end{equation*}
By Lebesgue's dominated convergence theorem, we have $\lim_{k\rightarrow\infty}m_{21}^{-}(x,k)=0_{q\times p}$ and $\lim_{k\rightarrow\infty}\boldsymbol{A}(k)=I_{p}$. Thus \eqref{Ak} holds. From \eqref{A} and the Lipschitz continuity in Proposition \ref{lmab}, we obtain that for every $\varepsilon>0$ there is a $\delta>0$ such that $\left|\det \boldsymbol{A}(k)-1\right|<\varepsilon$ for all $k\in \mathbb{C}^{+}\cup\mathbb{R}$  when $\|U\|_{L^{1}}<\delta$. For example, if $\varepsilon=1/2$, then exist a $\delta>0$ such that $|\det \boldsymbol{A}(k)|>1/2$ when $\|U\|_{L^{1}}<\delta$. We can get the result for $\boldsymbol{D}(k)$ by a similar way.
\end{proof}
In the next subsection we will show how Lemma \ref{lad} plays a crucial role.

\subsection{Set-up of an   RH problem }\label{s42}
We still start by constructing a piecewise analytic matrix function from \eqref{2+abcd}, let
\begin{equation}\label{mxk}
M(x,k)=\left\{
\begin{aligned}
&\begin{bmatrix}m_{1}^{-}(x,k)\boldsymbol{A}^{-1}(k),&m_{2}^{+}(x,k)\end{bmatrix},\quad \text{Im}k>0,\\
&\begin{bmatrix}m_{1}^{+}(x,k),&m_{2}^{-}(x,k)\boldsymbol{D}^{-1}(k)\end{bmatrix},\quad \text{Im}k<0,
\end{aligned}
\right.
\end{equation}
we have
\begin{equation}\label{M}
M_{+}(x,k)=M_{-}(x,k)e^{-ixk\operatorname{ad}\sigma_{3}}\breve{v}(k),\quad k\in\mathbb{R},
\end{equation}
where
\begin{equation}\label{v}
\breve{v}(k)=\begin{bmatrix}I_{p}+\boldsymbol{R}(k)\boldsymbol{R}^{\dag}(k^{*})&-\boldsymbol{R}(k)\\
-\boldsymbol{R}^{\dag}(k^{*})&I_{q}\end{bmatrix}.
\end{equation}
From \eqref{m11a}-\eqref{m2d}, we still obtain that $M(x,k)$ satisfies the differential equation \eqref{2.3} and $M(x,k)\rightarrow I_{p+q} \ \text{as} \ x\rightarrow+\infty$. We must emphasize that $M(x,k)$ is well-defined only when the matrix scattering date $\boldsymbol{A}(k)$ and $\boldsymbol{D}(k)$ are invertible. Next, we will make full use of the properties of $\boldsymbol{A}(k)$ and $\boldsymbol{D}(k)$ presented in Lemma \ref{lad} and construct a new matrix-value function $\boldsymbol{M}(x,k)$ by Zhou's technique \cite{Zhou1998} to overcome this difficulty.

Choose $x_{0}\in\mathbb{R}$ such that $\|U\chi_{(x_{0},+\infty)}\|_{L^{1}}\ll1$ and denote the corresponding cut-off potential as $\boldsymbol{Q}_{x_{0}}=\boldsymbol{Q}\chi_{(x_{0},+\infty)}$. Replacing the potential $\boldsymbol{Q}$ in \eqref{2.3} by $\boldsymbol{Q}_{x_{0}}$, then the associated Jost functions $m^{(0)\pm}$ and a new piecewise analytic function $M^{(0)}(x,k)$ can be constructed in the same way as for $m^{\pm}$ and $M(x,k)$, respectively. Let $M^{(1)}(x,k)$ is the unique solution of the Volterra equation
\begin{equation}
M^{(1)}(x,k)=I_{p+q}+\int_{x_{0}}^{x}e^{i(y-x)k\operatorname{ad}\sigma_{3}}U(y)M^{(1)}(y,k)dy,
\end{equation}
and define
\begin{equation}
M^{(2)}(x,k):=M^{(1)}(x,k)e^{-i(x-x_{0})k\operatorname{ad}\sigma_{3}}M^{(0)}(x_{0},k).
\end{equation}
By uniqueness, we have $M^{(2)}(x,k)=M^{(0)}(x,k),\ \forall x\geq x_{0}$ and $M^{(2)}(x,k)\rightarrow I_{p+q}, $ as $ x \rightarrow +\infty$.
Choose a disc $B(0,S_{\infty})=\{z\in\mathbb{C}: |z|<S_{\infty}\}$ so large that $\det \boldsymbol{D}(k)$ have no zeros in $\mathbb{C}\setminus B(0,S_{\infty})$, which can be guaranteed by \eqref{Ak}. Define the contour
\begin{equation*}
\Gamma=\mathbb{R}\cup\Gamma_{\infty},\quad \Gamma_{\infty}=\{z\in\mathbb{C}:|z|=S_{\infty}\},
\end{equation*}
oriented as in Figure \ref{f1}.
The positive region and negative region of $\Gamma$ are denoted as
\begin{equation}\label{omega}
\Omega_{+}=\Omega_{1}\cup\Omega_{4}\quad \text{and} \quad \Omega_{-}=\Omega_{2}\cup\Omega_{3},
\end{equation}
respectively. A new piecewise analytic matrix-value function $\boldsymbol{M}(x,k)$ can be defined as
\begin{equation}
\boldsymbol{M}(x,k)=
\left\{
\begin{aligned}
&M(x,k),& \quad &k\in\mathbb{C}\setminus (B(0,S_{\infty})\cup\mathbb{R}), \\
&M^{(2)}(x,k),& \quad &k\in B(0,S_{\infty})\setminus\mathbb{R},
\end{aligned}
\right.
\end{equation}
which is always well-defined. It follows from the construction process that $\boldsymbol{M}(x,k)$ also satisfies \eqref{2.3} and $\boldsymbol{M}(x,k)\rightarrow I_{p+q}\  \text{as} \ x\rightarrow+\infty$.

Let us calculate the jump matrix $V(k)$ for $\boldsymbol{M}(x,k)$, which is tricky because $V(k)$ is a $(p+q)\times(p+q)$ matrix function and the associated scattering date are all high-order matrices. For the $2\times 2$ AKNS system \cite{Zhou1998}, it is straightforward to compute the inverse matrix of a $2\times2$ matrix, and some additional conjugate symmetry between the scalar scattering date can be obtained to simplify calculation.
For the $3\times 3$ AKNS system \cite{LiuJQ}, the inverse matrix of the $3\times3$ matrix can also be obtained with the help of the adjoint matrix.
However, for the $(p+q)\times(p+q)$ AKNS system, we cannot directly calculate the inverse matrix of a $(p+q)\times(p+q)$ matrix. Fortunately, we find that this difficulty can be solved by considering the $(p+q)\times(p+q)$ matrix as the block matrix and making full use of \eqref{2+abcd}, \eqref{m-1} and \eqref{s-1}.

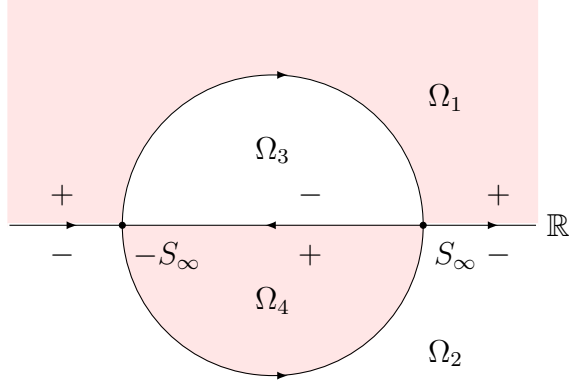
\begin{figure}
\begin{center}
\begin{tikzpicture}
\draw [fill=pink,ultra thick,color=red!10](-3.5,0) rectangle (3.5,3);
 \draw [fill=pink,ultra thick,color=white!10] (-3.5,0) rectangle (3.5,-2.2);
\filldraw[color=white!10](0,0)-- (2,0) arc (0:180:2);
\filldraw[color=red!10](0,0)-- (2,0) arc (0:-180:2);
\draw(0,0)circle(2cm);
\draw[fill,black] (2,0) circle [radius=0.04];
\draw[fill,black] (-2,0) circle [radius=0.04];
\draw [ ](-3.5,0)--(3.5,0)  node[right, scale=1] {$\mathbb{R}$};
\draw [-latex](2.95,0)--(3,0);
\draw [-latex](-2.65,0)--(-2.6,0);
\draw [-latex](0,0)--(-0.1,0);
\draw [-latex]  (0.15,2)--(0.2,2);
\draw [-latex]  (0.15,-2)--(0.2,-2);
\node at (3, 0.4 )  {$+$};
\node at (3, -0.4 )  {$-$};
\node at (-2.8, 0.4 )  {$+$};
\node at (-2.8, -0.4 )  {$-$};
\node at (0.5, 0.4 )  {$-$};
\node at (0.5, -0.4 )  {$+$};
\node [right] at (-2,-0.4)  {$-S_{\infty}$};
\node [right] at (2,-0.4)  {$S_{\infty}$};
\node at (2.3,1.7)  {$\Omega_{1}$};
\node at (2.3,-1.7)  {$\Omega_{2}$};
\node at (0,1)  {$\Omega_{3}$};
\node at (0,-1)  {$\Omega_{4}$};
\end{tikzpicture}
\end{center}
\caption{The oriented contour $\Gamma$.}
\label{f1}
\end{figure}

\begin{proposition}\label{pv}
The jump matrix $V(k)$ of the piecewise analytic function $\boldsymbol{M}(x,k)$ along the contour $\Gamma$ satisfying
\begin{equation}
\boldsymbol{M}_{+}(x,k)=\boldsymbol{M}_{-}(x,k)e^{-ixk\operatorname{ad}\sigma_{3}}V(k)
\end{equation}
can be given as follows:
\begin{enumerate}[label=(\roman*)]
  \item For $\Gamma_{1}=(-\infty,-S_{\infty})\cup(S_{\infty},+\infty)$,
  \begin{equation}\label{V1}
  V(k)=\begin{bmatrix}I_{p}+ \boldsymbol{R}(k)\boldsymbol{R}^{\dag}(k)&-\boldsymbol{R}(k)\\ -\boldsymbol{R}^{\dag}(k)&I_{q}\end{bmatrix}.
  \end{equation}
  \item For $\Gamma_{2}=\Gamma_{\infty}\cap\mathbb{C}^{+}$,
  \begin{equation}\label{V2}
  V(k)=\begin{bmatrix}I_{p}&0_{p\times q}\\e^{-2ix_{0}k}(\boldsymbol{D}_{0}^{-1})^{\dag}(k^{*})m_{21}^{-}(x_{0},k)\boldsymbol{A}^{-1}(k)&I_{q}\end{bmatrix}.
  \end{equation}
  \item For $\Gamma_{3}=(-S_{\infty},S_{\infty})$,
  \begin{equation}\label{V3}
  V(k)=\begin{bmatrix}I_{p}&\boldsymbol{R}_{0}(k)\\ \boldsymbol{R}_{0}^{\dag}(k) &I_{q}+ \boldsymbol{R}_{0}^{\dag}(k)\boldsymbol{R}_{0}(k)\end{bmatrix}.
  \end{equation}
  \item For $\Gamma_{4}=\Gamma_{\infty}\cap\mathbb{C}^{-}$,
  \begin{equation}\label{V4}
  V(k)=\begin{bmatrix}I_{p}&e^{2ix_{0}k}(\boldsymbol{A}^{-1})^{\dag}(k^{*})(m_{21}^{-})^{\dag}(x_{0},k^{*})\boldsymbol{D}_{0}^{-1}(k)\\0_{q\times p}&I_{q}\end{bmatrix}.
  \end{equation}
\end{enumerate}
Here, the subscript ``0" denotes the scattering date or reflection coefficient associated with the potential $\boldsymbol{Q}_{x_{0}}$.
\end{proposition}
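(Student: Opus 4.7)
The plan is to compute the jump of $\boldsymbol{M}$ on each of the four arcs of $\Gamma$ separately, exploiting the piecewise definition $\boldsymbol{M}=M$ in $\Omega_1\cup\Omega_2$ and $\boldsymbol{M}=M^{(2)}$ in $\Omega_3\cup\Omega_4$. The arcs $\Gamma_1$ and $\Gamma_3$ therefore carry jumps that are internal to $M$ and $M^{(2)}$ respectively, whereas $\Gamma_2$ and $\Gamma_4$ record the transition from $M$ (outside the disc) to $M^{(2)}$ (inside). On $\Gamma_1$ the two boundary limits $\boldsymbol{M}_\pm$ coincide with $M_\pm$, so $V(k)$ is immediately $\breve v(k)$ from \eqref{M}--\eqref{v}, producing (i). On $\Gamma_3$ the orientation is westward (Figure~\ref{f1}), so the $+$-side is $\Omega_4\subset\mathbb{C}^-$, whence $\boldsymbol{M}_+=M^{(2)}_-$ and $\boldsymbol{M}_-=M^{(2)}_+$. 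Since $M^{(1)}(x,k)$ is entire in $k$, the jump of $M^{(2)}$ is inherited entirely from that of $M^{(0)}$ at $x_0$, and after the orientation flip $V(k)$ becomes the inverse of the analog of \eqref{v} for $\boldsymbol{Q}_{x_0}$. A block (Schur) inversion of that matrix, whose Schur complement equals $I_p$ by design, then gives (iii).

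\textbf{Core computation on $\Gamma_2$.} On $\Gamma_2$ one has $\boldsymbol{M}_+=M$ (from $\Omega_1$) and $\boldsymbol{M}_-=M^{(2)}$ (from $\Omega_3$), hence
\begin{equation*}
V(k)=e^{ixk\operatorname{ad}\sigma_3}\bigl([M^{(2)}(x,k)]^{-1}M(x,k)\bigr).
\end{equation*}
Both $M(\,\cdot\,,k)$ and $M^{(1)}(\,\cdot\,,k)$ solve the same first-order linear Lax ODE \eqref{2.3}, so comparing fundamental solutions and evaluating at $x_0$ yields
\begin{equation*}
M(x,k)=M^{(1)}(x,k)\,e^{-i(x-x_0)k\operatorname{ad}\sigma_3}M(x_0,k).
\end{equation*}
The $M^{(1)}$-factor then cancels against the corresponding factor of $M^{(2)}$, and the $e^{-i(x-x_0)k\operatorname{ad}\sigma_3}$ exponent combines with the outer $e^{ixk\operatorname{ad}\sigma_3}$ to give
\begin{equation*}
V(k)=e^{ix_0 k\operatorname{ad}\sigma_3}\bigl([M^{(0)}(x_0,k)]^{-1}M(x_0,k)\bigr),
\end{equation*}
which is manifestly independent of $x$. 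Next I would evaluate the two boundary matrices at $x_0$: since $\boldsymbol{Q}_{x_0}$ vanishes on $(-\infty,x_0]$, the Volterra equation forces $m^{(0)-}(x_0,k)=I_{p+q}$, and substituting this into the scattering relation at $x_0$ gives $m^{(0)+}(x_0,k)=e^{-ix_0k\operatorname{ad}\sigma_3}S_0^{-1}(k)$; moreover, because $\boldsymbol{Q}\equiv\boldsymbol{Q}_{x_0}$ on $(x_0,+\infty)$, uniqueness of the right Volterra equation forces $m^+(x_0,k)=m^{(0)+}(x_0,k)$. Applying the symmetry $S_0^{-1}(k)=S_0^\dag(k^*)$ from \eqref{s-1} renders $M^{(0)}(x_0,k)$ block upper-triangular with invertible diagonal blocks, so its inverse is explicit. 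A block-by-block multiplication, combined with the Schur-complement identity $\boldsymbol{A}_0-\boldsymbol{B}_0\boldsymbol{D}_0^{-1}\boldsymbol{C}_0=(\boldsymbol{A}_0^{-1})^\dag(k^*)$ read off from the $(1,1)$ block of $S_0^{-1}=S_0^\dag(k^*)$, collapses the two diagonal blocks of $[M^{(0)}(x_0,k)]^{-1}M(x_0,k)$ to $I_p$ and $I_q$ and annihilates the $(1,2)$ block. A final conjugation by $e^{ix_0k\operatorname{ad}\sigma_3}$ multiplies the surviving $(2,1)$ block by $e^{-2ix_0k}$ and produces formula (ii).

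\textbf{$\Gamma_4$ and the main obstacle.} Part (iv) is the analog of (ii) in $\mathbb{C}^-$: the orientation on $\Gamma_4$ makes $\boldsymbol{M}_+=M^{(2)}$ and $\boldsymbol{M}_-=M$, which interchanges the two factors in the previous computation. The analytic objects below the real axis, $\boldsymbol{D}_0^{-1}(k)$ and $(m_{21}^-)^\dag(x_0,k^*)$, enter through the symmetry $(m^\pm)^{-1}(k)=(m^\pm)^\dag(k^*)$ of \eqref{m-1}, and the same cancellation mechanism yields the claimed upper-triangular jump. I expect the main obstacle to be concentrated in the algebraic step of (ii): executing the block multiplication so that the diagonal blocks collapse to identities and the $(1,2)$ block vanishes requires simultaneously invoking the explicit form of $m^{(0)+}(x_0,k)$, the block Schur-complement identity in its correct dagger-symmetric form, and careful bookkeeping of the off-diagonal factors $e^{\pm2ix_0k}$ introduced by $\operatorname{ad}\sigma_3$-conjugations. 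Once this is under control, the parallel computation on $\Gamma_4$ goes through with only notational changes.
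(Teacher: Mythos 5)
Your proposal is correct and follows essentially the same route as the paper: reduce the jumps on $\Gamma_2,\Gamma_4$ to the evaluation at $x=x_0$, use $m^{(0)-}(x_0,k)=I_{p+q}$ and $m^{(0)+}(x_0,k)=m^{+}(x_0,k)$ together with the dagger symmetry (the paper phrases it at the eigenfunction level via \eqref{m-1} rather than through $S_0^{-1}=S_0^{\dag}(k^{*})$, but these are equivalent), and obtain $\Gamma_1,\Gamma_3$ directly from \eqref{M}--\eqref{v} and the orientation reversal. One cosmetic point: the block upper-triangularity of $M^{(0)}(x_0,k)$ comes from $m^{(0)-}(x_0,k)=I_{p+q}$ alone, not from the symmetry, which is only needed to express the inverse's blocks in terms of $\boldsymbol{A}_0^{\dag},\boldsymbol{D}_0^{\dag}$.
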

\begin{proof}
Let $V_{j}$ denote the restriction of $V$ to $\Gamma_{j}$ with $1\leq j\leq4$. The \eqref{V1} for $V_{1}$ can be obtained straightforwardly from equations \eqref{mxk}-\eqref{v}. From the expression of $V_{1}$ we have
\begin{equation}
  V_{3}(k)=\begin{bmatrix}I_{p}+ \boldsymbol{R}_{0}(k)\boldsymbol{R}_{0}^{\dag}(k)&-\boldsymbol{R}_{0}(k)\\ -\boldsymbol{R}_{0}^{\dag}(k)&I_{q}\end{bmatrix}^{-1}=\begin{bmatrix}I_{p}&\boldsymbol{R}_{0}(k)\\ \boldsymbol{R}_{0}^{\dag}(k) &I_{q}+ \boldsymbol{R}_{0}^{\dag}(k)\boldsymbol{R}_{0}(k)\end{bmatrix}.
\end{equation}

So next we mainly describe how to obtain $V_{2}(k)$ and $V_{4}(k)$. In fact, it is sufficient to consider $x=x_{0}$. It follows from the Volterra integral equations \eqref{2.5} that
\begin{equation}\label{m0}
m^{(0)-}(x_{0},k)=I_{p+q}, \quad m^{(0)+}(x_{0},k)=m^{+}(x_{0},k).
\end{equation}
Applying the symmetry relation \eqref{m-1} and \eqref{m0} to \eqref{2+abcd}, we have
\begin{equation}\label{ad0}
\boldsymbol{A}_{0}(k)=(m_{11}^{+})^{\dag}(x_{0},k^{*}), \quad \boldsymbol{D}_{0}(k)=(m_{22}^{+})^{\dag}(x_{0},k^{*}).
\end{equation}
Make use of \eqref{2+abcd}, \eqref{m-1}, \eqref{m0} and \eqref{ad0}, we calculate that
\begin{equation*}
\begin{aligned}
&e^{-ix_{0}k\operatorname{ad}\sigma_{3}}V_{2}(k)
=\left(M^{(0)}(x_{0},k)\right)^{-1}M(x_{0},k)\\
=&\begin{bmatrix}\boldsymbol{A}_{0}^{-1}(k)&m_{12}^{+}(x_{0},k)\\0_{q\times p}&m_{22}^{+}(x_{0},k)\end{bmatrix}^{-1}\begin{bmatrix}m_{11}^{-}(x_{0},k)\boldsymbol{A}^{-1}(k)&m_{12}^{+}(x_{0},k)\\m_{21}^{-}(x_{0},k)\boldsymbol{A}^{-1}(k)&m_{22}^{+}(x_{0},k)\end{bmatrix}\\
=&\begin{bmatrix}(m_{11}^{+})^{\dag}(x_{0},k^{*})&(m_{21}^{+})^{\dag}(x_{0},k^{*})\\0_{q\times p}&(m_{22}^{+})^{-1}(x_{0},k)\end{bmatrix}\begin{bmatrix}m_{11}^{-}(x_{0},k)\boldsymbol{A}^{-1}(k)&m_{12}^{+}(x_{0},k)\\m_{21}^{-}(x_{0},k)\boldsymbol{A}^{-1}(k)&m_{22}^{+}(x_{0},k)\end{bmatrix}\\
=&\begin{bmatrix}I_{p}&0_{p\times q}\\(\boldsymbol{D}_{0}^{-1})^{\dag}(k^{*})m_{21}^{-}(x_{0},k)\boldsymbol{A}^{-1}(k)&I_{q}\end{bmatrix},
\end{aligned}
\end{equation*}
and
\begin{equation*}
\begin{aligned}
&e^{-ix_{0}k\operatorname{ad}\sigma_{3}}V_{4}(k)=M^{-1}(x_{0},k)M^{(0)}(x_{0},k)\\
=&\begin{bmatrix}m_{11}^{+}(x_{0},k)&m_{12}^{-}(x_{0},k)\boldsymbol{D}^{-1}(k)\\m_{21}^{+}(x_{0},k)&m_{22}^{-}(x_{0},k)\boldsymbol{D}^{-1}(k)\end{bmatrix}^{-1}\begin{bmatrix}m_{11}^{+}(x_{0},k)&0_{p\times q}\\m_{21}^{+}(x_{0},k)&\boldsymbol{D}_{0}^{-1}(k)\end{bmatrix}\\
=&\begin{bmatrix}-(\boldsymbol{A}^{-1})^{\dag}(k^{*})m_{21}^{-}(x_{0},k^{*})m_{22}^{-}(x_{0},k)(m_{12}^{-})^{-1}(x_{0},k)&(\boldsymbol{A}^{-1})^{\dag}(k^{*})m_{21}^{-}(x_{0},k^{*})\\(m_{12}^{+})^{\dag}(x_{0},k^{*})&(m_{22}^{+})^{\dag}(x_{0},k^{*})\end{bmatrix}\\
&\times\begin{bmatrix}m_{11}^{+}(x_{0},k)&0_{p\times q}\\m_{21}^{+}(x_{0},k)&\boldsymbol{D}_{0}^{-1}(k)\end{bmatrix}\\
=&\begin{bmatrix}I_{p}&(\boldsymbol{A}^{-1})^{\dag}(k^{*})(m_{21}^{-})^{\dag}(x_{0},k^{*})\boldsymbol{D}_{0}^{-1}(k)\\0_{q\times p}&I_{q}\end{bmatrix}.
\end{aligned}
\end{equation*}
Although the computation of the inverse matrix of $M(x_{0},k)$ is more difficult than that of $M^{(0)}(x_{0},k)$, the result \eqref{V4} can still be obtained by repeatedly using the symmetry relation \eqref{m-1}. Here we omit the redundant calculation process.
\end{proof}

In this work, the contour $\Gamma=\bigcup_{j=1}^{4}\Gamma_{j}$ is a piecewise smooth curve, where $\Gamma_{j}$ is non-self-intersections. We denote $f\in H^{k}(\Gamma)$ if on each smooth piece $f$ is $H^{k}$.
With the definition \eqref{space} of the spaces $H^{k,j}(\partial\Omega)$ and \eqref{omega}, we can describe the regularity of $V(k)$ on $\partial\Omega_{\pm}$ by the following Proposition.

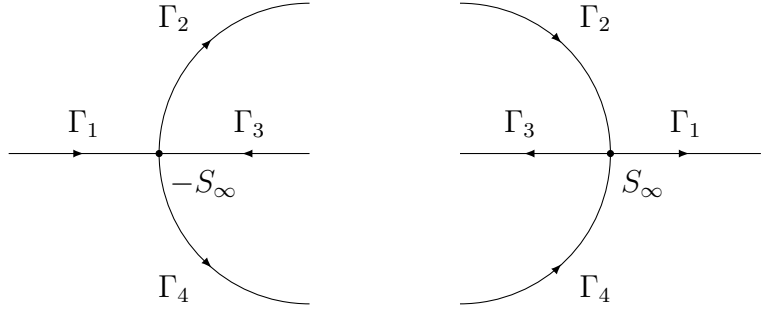
\begin{figure}
\begin{center}
\begin{tikzpicture}
\draw (1,0)--(5,0);
\draw (1,2) arc (90:-90:2);
\draw[fill,black] (3,0) circle [radius=0.04];
\draw [-latex](4,0)--(4.05,0);
\draw [-latex](1.90,0)--(1.85,0);
\draw [-latex](2.3,1.52)--(2.35,1.47);
\draw [-latex](2.3,-1.52)--(2.35,-1.47);
\node [right] at (3,-0.4)  {$S_{\infty}$};
\node at (4,0.4)  {$\Gamma_{1}$};
\node at (1.8,0.4)  {$\Gamma_{3}$};
\node at (2.8,1.8)  {$\Gamma_{2}$};
\node at (2.8,-1.8)  {$\Gamma_{4}$};

\draw (-5,0)--(-1,0);
\draw (-1,2) arc (90:270:2);
\draw[fill,black] (-3,0) circle [radius=0.04];
\draw [-latex](-4.05,0)--(-4,0);
\draw [-latex](-1.85,0)--(-1.9,0);
\draw [-latex](-2.35,1.47)--(-2.3,1.52);
\draw [-latex](-2.35,-1.47)--(-2.3,-1.52);
\node [right] at (-3,-0.4)  {$-S_{\infty}$};
\node at (-4,0.4)  {$\Gamma_{1}$};
\node at (-1.8,0.4)  {$\Gamma_{3}$};
\node at (-2.8,1.8)  {$\Gamma_{2}$};
\node at (-2.8,-1.8)  {$\Gamma_{4}$};
\end{tikzpicture}
\end{center}
\caption{Matching conditions at $\pm S_{\infty}$.}
\label{f2}
\end{figure}

\begin{proposition}\label{pvpm}
Suppose that $\boldsymbol{Q}\in H^{1,1}(\mathbb{R})$, then the jump matrix $V(k)$  in Proposition \ref{pv} admits a triangular factorization \begin{equation}
V(k)=V_{-}^{-1}(k)V_{+}(k), \quad k\in \Gamma,
\end{equation}
where $V_{\pm}(k)-I_{p+q}$ and $V_{\pm}^{-1}(k)-I_{p+q}\in H^{1,1}(\partial\Omega_{\pm})$.
\end{proposition}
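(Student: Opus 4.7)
The plan is to produce the factorization $V = V_-^{-1} V_+$ piece by piece on each of the four smooth arcs $\Gamma_j$, verify the $H^{1,1}$ smoothness of each factor on its own arc, and then check that $V_\pm$ match continuously at the two self-intersection corners $\pm S_\infty$. By the definition of $H^{1,1}(\partial\Omega_\pm)$ recalled in the notations section, this continuous matching at the corners is precisely what is needed beyond the piecewise smoothness.

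First I would write down the explicit triangular factorizations. On $\Gamma_1$, using \eqref{V1},
\begin{equation*}
V(k) = \begin{bmatrix} I_p & -\boldsymbol{R}(k) \\ 0_{q\times p} & I_q \end{bmatrix} \begin{bmatrix} I_p & 0_{p\times q} \\ -\boldsymbol{R}^{\dag}(k) & I_q \end{bmatrix},
\end{equation*}
and the analogous factorization of \eqref{V3} holds on $\Gamma_3$ with $\boldsymbol{R}_0$ in place of $\boldsymbol{R}$. On $\Gamma_2$ and $\Gamma_4$ the matrix $V$ is already unipotent triangular, so one of $V_\pm$ equals $I_{p+q}$ and the other equals $V$, with the assignment dictated by the orientation of Figure \ref{f1}. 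Piecewise $H^{1,1}$ smoothness of the nonidentity entries is then a direct consequence of results already in hand: Proposition \ref{pro1} gives $\boldsymbol{R}, \boldsymbol{R}_0 \in H^{1,1}(\mathbb{R})$, which handles $\Gamma_1$ and $\Gamma_3$; Proposition \ref{lm} applied at $x=x_0$ gives $m_{21}^{-}(x_0,\cdot) \in H^{1}_{k}$; and Proposition \ref{lmab} together with Lemma \ref{lad} guarantee the existence and $H^1$-smoothness of $\boldsymbol{A}^{-1}$ and $\boldsymbol{D}_0^{-1}$ on the compact arc $\Gamma_\infty$ — the smallness $\|U\chi_{(x_0,+\infty)}\|_{L^1}\ll 1$ prevents $\det \boldsymbol{D}_0$ from vanishing in $\overline{\mathbb{C}^-}$, and $S_\infty$ was chosen large enough that $\det \boldsymbol{A}$ has no zeros outside $B(0,S_\infty)$. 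Since $\Gamma_\infty$ is compact, products and inverses preserve $H^{1}$, so each block of $V_\pm$ lies in $H^{1,1}$ of its arc.

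The real work is the corner matching at $\pm S_\infty$, which is the issue flagged in the introduction. Since $\Omega_+ = \Omega_1 \cup \Omega_4$ is disconnected, one must check at $k=S_\infty$ that $V_+$ has a common limit along the two arcs of $\partial \Omega_1$ meeting there (namely $\Gamma_1$ and $\Gamma_2$), and along the two arcs of $\partial \Omega_4$ meeting there (namely $\Gamma_3$ and $\Gamma_4$), with the dual statement for $V_-$ on $\partial \Omega_-$, and the analogous matching at $-S_\infty$. To verify these, I would use the equality $M^{(2)}(x_0,k)=M^{(0)}(x_0,k)$ coming from uniqueness of the Volterra equation, together with \eqref{m0} and \eqref{ad0}, to rewrite the off-diagonal blocks $(\boldsymbol{D}_{0}^{-1})^{\dag} m_{21}^{-}\boldsymbol{A}^{-1}$ and $(\boldsymbol{A}^{-1})^{\dag}(m_{21}^{-})^{\dag}\boldsymbol{D}_{0}^{-1}$ appearing in \eqref{V2} and \eqref{V4} in terms of the full scattering matrix $S(k)$, and then compare these with the limits of $\boldsymbol{R}$ and $\boldsymbol{R}_0$ arriving along $\Gamma_1$ and $\Gamma_3$. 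The relation $\boldsymbol{R}=\boldsymbol{B}\boldsymbol{D}^{-1}$, the symmetry \eqref{s-1}, and the focusing-case identity \eqref{ca} then collapse the four one-sided limits at each corner to a common algebraic expression, yielding the required continuity.

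The hardest part is precisely this corner matching: $V_2$ and $V_4$ are expressed through scattering data of the cut-off potential $\boldsymbol{Q}_{x_0}$ on $\Gamma_\infty$, while $V_1$ and $V_3$ are expressed through $\boldsymbol{R}$ and $\boldsymbol{R}_0$ on $\mathbb{R}$, and producing a single closed form in which the matching is transparent requires the block-matrix manipulations that avoid any direct inversion of the $(p+q)\times(p+q)$ matrix $M(x_0,k)$ — exactly the recurring theme emphasized in the introduction.
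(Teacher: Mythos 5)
Your overall strategy coincides with the paper's: establish piecewise $H^{1}$ (resp.\ $H^{1,1}$) regularity of each factor on each smooth arc from Propositions \ref{lm}, \ref{lmab}, \ref{pro1} and Lemma \ref{lad}, and then verify the matching at $\pm S_{\infty}$ by re-expressing the off-diagonal blocks of $V_{2}$ and $V_{4}$ through the scattering relations \eqref{2+abcd}, \eqref{m0}, \eqref{ad0} and the symmetries, exactly as the paper does. However, there is one concrete error in your construction: on $\Gamma_{2}$ and $\Gamma_{4}$ you assign one of the factors $V_{\pm}$ to be $I_{p+q}$ and the other to be $V$ itself. This choice is incompatible with the corner matching you then set out to verify. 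The computation in the paper gives
\begin{equation*}
V_{2}(\pm S_{\infty})=\begin{bmatrix}I_{p}&0_{p\times q}\\ \boldsymbol{R}_{0}^{\dag}(\pm S_{\infty})&I_{q}\end{bmatrix}\begin{bmatrix}I_{p}&0_{p\times q}\\ -\boldsymbol{R}^{\dag}(\pm S_{\infty})&I_{q}\end{bmatrix},
\end{equation*}
so at the corners \emph{both} triangular factors of $V_{2}$ are nontrivial: the first must agree with the limit of $V_{-}^{-1}$ along $\Gamma_{3}$ and the second with the limit of $V_{+}$ along $\Gamma_{1}$. With your assignment, say $V_{-}\equiv I_{p+q}$ on $\Gamma_{2}$, the boundary value $V_{-}\upharpoonright_{\partial\Omega_{3}}$ would jump at $S_{\infty}$ from $I_{p+q}$ (limit along $\Gamma_{2}$) to the lower-triangular matrix with block $-\boldsymbol{R}_{0}^{\dag}(S_{\infty})$ (limit along $\Gamma_{3}$), violating the order-$0$ matching required for $H^{1}(\partial\Omega_{3})$ unless $\boldsymbol{R}_{0}(\pm S_{\infty})=0$, which is not generally true.

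The fix is what the paper actually does: rather than prescribing $V_{\pm}$ naively on the circular arcs, one verifies the cyclic consistency condition $V_{1}V_{2}^{-1}V_{3}V_{4}^{-1}=I_{p+q}+o(1)$ as $k\to S_{\infty}$ (and the analogue at $-S_{\infty}$), which is equivalent to the identity $V_{1}^{-1}V_{4}=V_{2}^{-1}V_{3}$ at the corners established by the displayed product formulas; the existence of a factorization $V=V_{-}^{-1}V_{+}$ with $V_{\pm}-I_{p+q}\in H^{1,1}(\partial\Omega_{\pm})$ and the stated triangularity then follows from the general theory (\cite{Zhou1999}, Theorem 2.56 in \cite{Trogdon}). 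If you prefer an explicit construction, you must split $V_{2}$ (resp.\ $V_{4}$) into two lower- (resp.\ upper-) triangular unipotent factors whose endpoint values at $\pm S_{\infty}$ are the two matrices displayed above. A minor further point: $\boldsymbol{R}_{0}\in H^{1}(\mathbb{R})$ only, not $H^{1,1}(\mathbb{R})$, since the cut-off potential $\boldsymbol{Q}_{x_{0}}$ is not in $H^{1}$; this is harmless here because $\boldsymbol{R}_{0}$ enters only on the bounded arcs, but the claim as you state it is not what Proposition \ref{pro1} delivers.
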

\begin{proof}
We first show that $V(k)-I_{p+q}, V^{-1}(k)-I_{p+q}\in H^{1}(\Gamma)$.
From Proposition \ref{pro1}, we have $\boldsymbol{R}(k)\in H^{1,1}(\mathbb{R})$ and $\boldsymbol{R}_{0}(k)\in H^{1}(\mathbb{R})$ and hence $V_{j}(k)-I_{p+q}, V_{j}^{-1}(k)-I_{p+q}\in H^{1}(\Gamma_{j}),\ j=1,3$.
By Propositions \ref{lm} and \ref{lmab}, we obtain $V_{j}(k)-I_{p+q}, V_{j}^{-1}(k)-I_{p+q}\in H^{1}(\Gamma_{j}),\ j=2,4,$ since $\Gamma_{j}, j=2,4$ can be parameterized as a subset of $\mathbb{R}$. Next, we verify the matching conditions at $\pm S_{\infty}$. By using the relation \eqref{2+abcd}, the value of the jump matrix $V_{2}(k)$ at point $\pm S_{\infty}$ can be rewritten as
\begin{align*}
  V_{2}(\pm S_{\infty})=&\begin{bmatrix}I_{p}&0_{p\times q}\\e^{-2ix_{0}(\pm S_{\infty})}(\boldsymbol{D}_{0}^{-1})^{\dag}(\pm S_{\infty})m_{21}^{-}(x_{0},\pm S_{\infty})\boldsymbol{A}^{-1}(\pm S_{\infty})&I_{q}\end{bmatrix}\\
  =&\begin{bmatrix}I_{p}&0_{p\times q}\\e^{-2ix_{0}(\pm S_{\infty})}(\boldsymbol{D}_{0}^{-1})^{\dag}(\pm S_{\infty})m_{21}^{+}(x_{0},\pm S_{\infty})&I_{q}\end{bmatrix}\times\\
  &\begin{bmatrix}I_{p}&0_{p\times q}\\(\boldsymbol{D}_{0}^{-1})^{\dag}(\pm S_{\infty})m_{22}^{+}(x_{0},\pm S_{\infty})\boldsymbol{C}(\pm S_{\infty})\boldsymbol{A}^{-1}(\pm S_{\infty})&I_{q}\end{bmatrix}\\
  =&\begin{bmatrix}I_{p}&0_{p\times q}\\(\boldsymbol{D}_{0}^{-1})^{\dag}(\pm S_{\infty})\boldsymbol{B}_{0}^{\dag}(\pm S_{\infty})&I_{q}\end{bmatrix}\begin{bmatrix}I_{p}&0_{p\times q}\\\boldsymbol{C}(\pm S_{\infty})\boldsymbol{A}^{-1}(\pm S_{\infty})&I_{q}\end{bmatrix}\\
  =&\begin{bmatrix}I_{p}&0_{p\times q}\\ \boldsymbol{R}_{0}^{\dag}(\pm S_{\infty})&I_{q}\end{bmatrix}\begin{bmatrix}I_{p}&0_{p\times q}\\-\boldsymbol{R}^{\dag}(\pm S_{\infty})&I_{q}\end{bmatrix}.
\end{align*}
Similarly, we have
\begin{align*}
V_{4}(\pm S_{\infty})=\begin{bmatrix}I_{p}&-\boldsymbol{R}(\pm S_{\infty})\\0_{q\times p}&I_{q}\end{bmatrix}\begin{bmatrix}I_{p}&\boldsymbol{R}_{0}(\pm S_{\infty})\\0_{q\times p}&I_{q}\end{bmatrix}.
\end{align*}
Combining \eqref{V1}, \eqref{V3} and the above results, we can calculate that
\begin{equation*}
V_{1}^{-1}(\pm S_{\infty})V_{4}(\pm S_{\infty})=V_{2}^{-1}(\pm S_{\infty})V_{3}(\pm S_{\infty}),
\end{equation*}
which means
\begin{equation*}
V_{1}V_{2}^{-1}V_{3}V_{4}^{-1}=I_{p+q}+o(1), \quad \text{as} \ k\rightarrow +S_{\infty},
\end{equation*}
and
\begin{equation*}
V_{3}^{-1}V_{2}V_{1}^{-1}V_{4}=I_{p+q}+o(1), \quad \text{as} \ k\rightarrow -S_{\infty}.
\end{equation*}
A more specific description is given in Figure \ref{f2}. Therefore, the jump matrix $V(k)$ defined on $\Gamma$ satisfies the matching conditions according to \cite{Zhou1999} and Theorem 2.56 in \cite{Trogdon}. Finally, it is important to emphasize that $V_{+}\upharpoonright_{\partial\Omega_{1}}-I_{p+q}
$
and $V_{-}\upharpoonright_{\partial\Omega_{3}}-I_{p+q}$ are strictly lower triangular, while $V_{-}\upharpoonright_{\partial\Omega_{2}}-I_{p+q}$
and $V_{+}\upharpoonright_{\partial\Omega_{4}}-I_{p+q}$ are strictly upper triangular.
\end{proof}

In order to estimate the decay and smoothness property of the  potential on the negative half-line of $x$, we can reselect $x_{0}$ such that the cut-off potentials $\boldsymbol{Q}_{x_{0}}=\boldsymbol{Q}\chi_{(x_{0},+\infty)}$ and $\widetilde{\boldsymbol{Q}}_{x_{0}}=\boldsymbol{Q}\chi_{(-\infty,-x_{0})}$ satisfy the conditions $\|U\chi_{(x_{0},+\infty)}\|_{L^{1}}\ll1$ and $\|U\chi_{(-\infty, -x_{0})}\|_{L^{1}}\ll1$, respectively.
Repeating the process of constructing $\boldsymbol{M}(x,k)$, we can use the $\widetilde{\boldsymbol{Q}}_{x_{0}}$ and $\widetilde{M}(x,k)$ defined in \eqref{ml} to establish a piecewise analytic matrix-value function $\widetilde{\boldsymbol{M}}(x,k)$ which satisfies
\begin{equation}\label{tim}
\widetilde{\boldsymbol{M}}(x,k)\rightarrow I_{p+q} \quad \text{as} \quad x\rightarrow -\infty.
\end{equation}
We define the auxiliary matrix $\boldsymbol{\delta}(k)$ by
\begin{equation*}
\boldsymbol{\delta}(k)=\boldsymbol{M}^{-1}(x,k) \widetilde{\boldsymbol{M}}(x,k).
\end{equation*}
Then the jump matrix $\widetilde{V}(k)$ for $\widetilde{\boldsymbol{M}}(x,k)$ can be given by
\begin{equation}\label{tiv}
\widetilde{V}(k)=\boldsymbol{\delta}_{-}^{-1}(k)V(k)\boldsymbol{\delta}_{+}(k),
\end{equation}
which satisfies the following properties similar to those of the Proposition \ref{pvpm}:
\begin{proposition}
Suppose that $\boldsymbol{Q}\in H^{1,1}(\mathbb{R})$, then the jump matrix $\widetilde{V}(k)$ admits a triangular factorization
\begin{equation}
\widetilde{V}(k)=\widetilde{V}_{-}^{-1}(k)\widetilde{V}_{+}(k), \quad k\in \Gamma,
\end{equation}
where $\widetilde{V}_{\pm}(k)-I_{p+q}$ and $\widetilde{V}_{\pm}^{-1}(k)-I_{p+q}\in H^{1,1}(\partial\Omega_{\pm})$.
\end{proposition}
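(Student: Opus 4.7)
The plan is to mirror the argument for Proposition~\ref{pvpm} under the reflection $x \mapsto -x$ that exchanges the roles of $+\infty$ and $-\infty$. Starting from $\widetilde{\boldsymbol{Q}}_{x_{0}} = \boldsymbol{Q}\chi_{(-\infty,-x_{0})}$, which satisfies $\|U\chi_{(-\infty,-x_{0})}\|_{L^{1}} \ll 1$, I would construct the associated Jost functions $\widetilde{m}^{(0)\pm}$ and the corresponding left-normalized piecewise analytic function $\widetilde{M}^{(0)}(x,k)$ as in \eqref{ml}. Because $\widetilde{\boldsymbol{Q}}_{x_{0}}$ vanishes on $(-x_{0},+\infty)$, the Volterra representations force $\widetilde{m}^{(0)+}(x,k) = I_{p+q}$ for all $x \ge -x_{0}$ and $\widetilde{m}^{(0)-}(-x_{0},k) = \widetilde{m}^{-}(-x_{0},k)$. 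The function $\widetilde{\boldsymbol{M}}(x,k)$ is then defined to equal $\widetilde{M}(x,k)$ from \eqref{ml} outside $B(0,S_{\infty})$ and to equal the analog of $M^{(2)}$ propagated from $\widetilde{M}^{(0)}(-x_{0},k)$ inside, so that \eqref{tim} holds by construction.

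Next, I would compute $\widetilde{V}(k)$ piece by piece on $\Gamma = \bigcup_{j=1}^{4}\Gamma_{j}$ in exact parallel with Proposition~\ref{pv}. On $\Gamma_{1}$ the jump follows from \eqref{M}; an application of the symmetry \eqref{ca} and of the block-diagonal conjugation by $\delta(k)$ from \eqref{del} produces a formula with the same triangular-factorization pattern as \eqref{V1}. On $\Gamma_{3}$ the jump is controlled by the reflection coefficient $\widetilde{\boldsymbol{R}}_{0}$ of $\widetilde{\boldsymbol{Q}}_{x_{0}}$, which is well defined by Lemma~\ref{lad}. On $\Gamma_{2}$ and $\Gamma_{4}$, I would repeat the block-matrix inversion trick of Proposition~\ref{pv}, now carried out at $x = -x_{0}$ using $\widetilde{m}^{(0)+}(-x_{0},k) = I_{p+q}$ (in place of $m^{(0)-}(x_{0},k) = I_{p+q}$) together with the symmetry \eqref{m-1}, expressing the result in terms of $\widetilde{m}^{-}(-x_{0},k)$, $\widetilde{\boldsymbol{A}}_{0}$, $\widetilde{\boldsymbol{D}}_{0}$, $\boldsymbol{A}$ and $\boldsymbol{D}$.

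With the explicit formulas in hand, the $H^{1}$ regularity of $\widetilde{V}(k)-I_{p+q}$ and $\widetilde{V}^{-1}(k)-I_{p+q}$ on each smooth piece is immediate from Propositions~\ref{lm}, \ref{lmab} and \ref{pro1} applied to both $\boldsymbol{Q}$ and $\widetilde{\boldsymbol{Q}}_{x_{0}}$. At the self-intersection points $\pm S_{\infty}$, I would verify the matching conditions by inserting the scattering relation \eqref{2+abcd} at $k = \pm S_{\infty}$ and $x = -x_{0}$ into the formulas for $\widetilde{V}_{2}$ and $\widetilde{V}_{4}$, splitting their off-diagonal blocks into products involving $\widetilde{\boldsymbol{R}}_{0}^{\dag}$ and $\boldsymbol{R}^{\dag}$ (respectively $\widetilde{\boldsymbol{R}}_{0}$ and $\boldsymbol{R}$), and checking that $\widetilde{V}_{1}\widetilde{V}_{2}^{-1}\widetilde{V}_{3}\widetilde{V}_{4}^{-1} = I_{p+q}+o(1)$ and $\widetilde{V}_{3}^{-1}\widetilde{V}_{2}\widetilde{V}_{1}^{-1}\widetilde{V}_{4} = I_{p+q}+o(1)$ near $\pm S_{\infty}$ in the manner of Figure~\ref{f2}. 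Combined with the triangular structure visible in the formulas, the criteria of~\cite{Zhou1999} and Theorem~2.56 of~\cite{Trogdon} then yield the desired factorization $\widetilde{V} = \widetilde{V}_{-}^{-1}\widetilde{V}_{+}$ with $\widetilde{V}_{\pm}(k)-I_{p+q},\,\widetilde{V}_{\pm}^{-1}(k)-I_{p+q} \in H^{1,1}(\partial\Omega_{\pm})$.

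The main obstacle is the explicit computation of $\widetilde{V}_{2}$ and $\widetilde{V}_{4}$: here one must invert the $(p+q)\times(p+q)$ block matrix $\widetilde{M}(-x_{0},k)$, which, in contrast to the situation in Proposition~\ref{pv} where one side came from a \emph{trivial} Volterra integration, now has \emph{both} rows coming from nontrivial Jost functions. The remedy is the same as in Proposition~\ref{pv}: use the symmetry \eqref{m-1} to rewrite the block inverse in terms of products of known Jost-function blocks and scattering data, after which the triangular structure of $\widetilde{V}_{\pm}$ on each $\partial\Omega_{j}$ can simply be read off. All remaining steps—regularity on the smooth pieces, matching at $\pm S_{\infty}$, and identification of the triangular factors—are structurally identical to those in Proposition~\ref{pvpm}.
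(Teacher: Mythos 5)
Your plan is correct and follows essentially the route the paper intends: the paper states this proposition without proof, deferring to the mirror image (under $x\mapsto -x$ and the left cutoff $\widetilde{\boldsymbol{Q}}_{x_{0}}$) of the arguments in Propositions \ref{pv} and \ref{pvpm}, which is exactly what you carry out, including the key identities $\widetilde{m}^{(0)+}(x,k)=I_{p+q}$ for $x\ge -x_{0}$, the block-matrix inversion via the symmetry \eqref{m-1}, and the matching conditions at $\pm S_{\infty}$. The only cosmetic difference is that you compute the jump of $\widetilde{\boldsymbol{M}}$ directly rather than through the conjugation $\widetilde{V}=\boldsymbol{\delta}_{-}^{-1}V\boldsymbol{\delta}_{+}$ of \eqref{tiv}; the two are equivalent.
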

\begin{remark}
Notice that the triangular properties of the matrices $V_{\pm}(k)-I_{p+q}$ and $\widetilde{V}_{\pm}(k)-I_{p+q}$ are completely opposite, that is,  $\widetilde{V}_{+}\upharpoonright_{\partial\Omega_{1}}-I_{p+q}$
and $\widetilde{V}_{-}\upharpoonright_{\partial\Omega_{3}}-I_{p+q}$ are strictly upper triangular, while $\widetilde{V}_{-}\upharpoonright_{\partial\Omega_{2}}-I_{p+q}$
and $\widetilde{V}_{+}\upharpoonright_{\partial\Omega_{4}}-I_{p+q}$  are strictly lower triangular. This difference makes $V$ and $\widetilde{V}$ contribute accordingly to the estimates of the potential on the positive and negative half-lines, respectively.
\end{remark}

\subsection{Time evolution of the matrix scattering data}\label{s43}
Given the fundamental solutions $\boldsymbol{M}_{\pm}(x,t,k)$ of the Lax pair \eqref{x1}-\eqref{t1}
satisfying
\begin{equation}
\boldsymbol{M}_{+}(x,t,k)=\boldsymbol{M}_{-}(x,t,k)e^{-ixk\operatorname{ad}\sigma_{3}}V(t,k).
\end{equation}
Substituting
\begin{equation*}
\boldsymbol{M}_{-}(x,t,k)e^{-ixk\sigma_{3}}\ \text{and} \ \boldsymbol{M}_{-}(x,t,k)e^{-ixk\sigma_{3}}V(t,k)
\end{equation*}
into \eqref{t1}, and using the fact that $\boldsymbol{M}_{\pm}(x,t,k)\rightarrow I_{p+q}$ as $x\rightarrow+\infty$, we obtain
\begin{equation}
V(t,k)=e^{-2itk^{2}\operatorname{ad}\sigma_{3}
}V(0,k)
\end{equation}
and
\begin{equation}\label{vt}
V_{\pm}(t,k)=e^{-2itk^{2}\operatorname{ad}\sigma_{3}}V_{\pm}(0,k).
\end{equation}
Therefore the results of Proposition \ref{pvpm} can be extended to $V_{\pm}(t,k)$ as shown in the following Proposition.
\begin{proposition}\label{ct}
If $\boldsymbol{Q}_{0}\in H^{1,1}(\mathbb{R})$, then for every $t\in[-T,T],\ T>0$, we have $V_{\pm}(t,k)-I_{p+q}\in H^{1,1}(\partial\Omega_{\pm})$. Moreover, the mapping $\boldsymbol{Q}_{0}\mapsto V_{\pm}(t,k)-I_{p+q}$ is Lipschitz continuous from $H^{1,1}(\mathbb{R})$ to $C([-T,T], H^{1,1}(\partial\Omega_{\pm}))$ for every $T>0$.
\end{proposition}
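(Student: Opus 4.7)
The plan is to mirror the proof of Proposition \ref{ct1} for the defocusing case, with the added complication that $\partial\Omega_{\pm}$ is a union of real-line pieces and compact arcs of $\Gamma_{\infty}$ meeting at the self-intersection points $\pm S_{\infty}$. The whole argument is driven by the explicit evolution formula \eqref{vt}, which can be rewritten as
\begin{equation*}
V_{\pm}(t,k) - I_{p+q} = e^{-2itk^{2}\sigma_{3}}\bigl(V_{\pm}(0,k) - I_{p+q}\bigr)e^{2itk^{2}\sigma_{3}},
\end{equation*}
since $\operatorname{ad}\sigma_{3}(I_{p+q})=0$. Thus the time flow acts only by conjugation with the block-diagonal factor $e^{\pm 2itk^{2}\sigma_{3}}$, which is an entire function of $k$, so in particular smooth across the nonsmooth points of $\Gamma$.

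First I would bound the $H^{1,1}(\partial\Omega_{\pm})$-norm of $V_{\pm}(t,k)-I_{p+q}$ by its $t=0$ counterpart. On the real-line pieces of $\partial\Omega_{\pm}$, $e^{\pm 2itk^{2}\sigma_{3}}$ is unitary, so pointwise $|V_{\pm}(t,k)-I_{p+q}| = |V_{\pm}(0,k)-I_{p+q}|$ and the $L^{2,1}$-part is preserved without growth in $t$. On the compact arcs of $\Gamma_{\infty}\cap\mathbb{C}^{\pm}$, $e^{\pm 2itk^{2}\sigma_{3}}$ is bounded by $e^{CT}$ with $C$ depending on $S_{\infty}$, so the $L^{2}$-part is bounded there too. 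Differentiating in $k$,
\begin{equation*}
\partial_{k}V_{\pm}(t,k) = -4itk\,\operatorname{ad}\sigma_{3}\bigl(V_{\pm}(t,k)-I_{p+q}\bigr) + e^{-2itk^{2}\sigma_{3}}\partial_{k}V_{\pm}(0,k)\,e^{2itk^{2}\sigma_{3}},
\end{equation*}
which gives $\|\partial_{k}V_{\pm}(t,k)\|_{L^{2}(\partial\Omega_{\pm})} \lesssim_{T} \|V_{\pm}(0,k)-I_{p+q}\|_{L^{2,1}} + \|\partial_{k}V_{\pm}(0,k)\|_{L^{2}}$. Assembling the pieces yields the analogue of \eqref{v12},
\begin{equation*}
\|V_{\pm}(t,k)-I_{p+q}\|_{H^{1,1}(\partial\Omega_{\pm})} \lesssim_{T} \|V_{\pm}(0,k)-I_{p+q}\|_{H^{1,1}(\partial\Omega_{\pm})}.
\end{equation*}

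Next I would prove $t$-continuity by the same tail/compact split used in Proposition \ref{ct1}. For $\varepsilon>0$, the Lebesgue dominated convergence theorem provides $\gamma>0$ with $\|V_{\pm}(0,\cdot)-I_{p+q}\|_{H^{1,1}(\partial\Omega_{\pm}\cap\{|k|>\gamma\})}<\varepsilon$; applying the uniform estimate above gives $\|V_{\pm}(t_{1},k)-V_{\pm}(t_{2},k)\|_{H^{1,1}(|k|>\gamma)}\lesssim\varepsilon$. On the remaining set (a compact subset of $\partial\Omega_{\pm}$, consisting of a bounded interval of $\mathbb{R}$ together with the arcs of $\Gamma_{\infty}$), the estimates
\begin{equation*}
\bigl|e^{\pm 2i(t_{1}-t_{2})k^{2}\sigma_{3}}-I_{p+q}\bigr|\lesssim\gamma^{2}|t_{1}-t_{2}|,\qquad \bigl|e^{\pm 2it_{1}k^{2}\sigma_{3}}t_{1}-e^{\pm 2it_{2}k^{2}\sigma_{3}}t_{2}\bigr|\lesssim(\gamma^{2}T+1)|t_{1}-t_{2}|
\end{equation*}
(valid for $|k|\leq\max(\gamma,S_{\infty})$) give uniform $H^{1,1}$-continuity in $t$ on the compact part, exactly as in the display just after \eqref{e21}. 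This proves $V_{\pm}(\cdot,k)-I_{p+q}\in C([-T,T];H^{1,1}(\partial\Omega_{\pm}))$. The Lipschitz continuity of $\boldsymbol{Q}_{0}\mapsto V_{\pm}(t,k)-I_{p+q}$ then follows by writing the difference for two initial data $\boldsymbol{Q}_{0},\widetilde{\boldsymbol{Q}}_{0}\in H^{1,1}(\mathbb{R})$ with $\|\boldsymbol{Q}_{0}\|, \|\widetilde{\boldsymbol{Q}}_{0}\|\leq\gamma$, applying the same estimates to $V_{\pm}(t,k)-\widetilde{V}_{\pm}(t,k)$, and invoking the Lipschitz continuity of $\boldsymbol{Q}_{0}\mapsto V_{\pm}(0,k)-I_{p+q}$ established in Proposition \ref{pvpm}.

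The main obstacle I anticipate is ensuring that the matching conditions at the self-intersection points $\pm S_{\infty}$ built into the definition of $H^{1,1}(\partial\Omega_{\pm})$ survive the time evolution, and that the triangular block structure of $V_{\pm}(0,k)-I_{p+q}$ noted at the end of Proposition \ref{pvpm} is not destroyed. Both issues reduce to the single observation that conjugation by the block-diagonal factor $e^{\pm 2itk^{2}\sigma_{3}}$ is a continuous $k$-dependent operator that preserves the off-diagonal/diagonal block decomposition of the $2\times 2$-block matrices: the zero-th order matching across $\pm S_{\infty}$ is preserved because $e^{\pm 2itk^{2}\sigma_{3}}$ is continuous in $k$ on all of $\Gamma$, and the strictly lower/upper triangular structure of the four restrictions of $V_{\pm}-I_{p+q}$ is preserved because $e^{\pm 2itk^{2}\sigma_{3}}$ is block diagonal with respect to the same $p\times q$ splitting.
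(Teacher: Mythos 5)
Your proof is correct and takes essentially the same route as the paper, which itself only remarks that Proposition \ref{ct} follows by the argument of Proposition \ref{ct1} once the arcs of $\Gamma_{\infty}$ are parameterized onto $\mathbb{R}$; you supply the details the paper omits (the $e^{CT}$ bound for $e^{\pm2itk^{2}\sigma_{3}}$ on the compact arcs, and the preservation of the matching conditions at $\pm S_{\infty}$ and of the block-triangular structure under conjugation by the block-diagonal factor). One minor attribution slip: the Lipschitz continuity of $\boldsymbol{Q}_{0}\mapsto V_{\pm}(0,k)-I_{p+q}$ is not actually stated in Proposition \ref{pvpm}; it comes from the Lipschitz estimates of Propositions \ref{lm}, \ref{lmab} and \ref{pro1}, since $V_{\pm}(0,k)$ is built from $\boldsymbol{R}$, $\boldsymbol{R}_{0}$ and the Jost functions evaluated at $x_{0}$.
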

\begin{remark}
We can prove Proposition \ref{ct} by a similar proof way for Proposition \ref{ct1}. Although we need to show that $V_{\pm}(t,k)$ is uniformly continuous with respect to time $t$ in the sense of the norm $H^{1,1}(\partial\Omega_{\pm})$, it is sufficient to prove the statement on $k\in\mathbb{R}$ since the others can be parameterized onto $\mathbb{R}$. The results of Proposition \ref{ct} hold for $\widetilde{V}_{\pm}(t,k)$ as well.
\end{remark}

\subsection{Unique solvability of the RH problem}\label{s44}
For the case $\sigma=-1$: focusing matrix NLS equation, we consider the following RH problem:
\begin{rhp}\label{RH2}
Find a $(p+q)\times(p+q)$ matrix-valued function $\boldsymbol{M}(x,k)$ that satisfies the following conditions:
\begin{itemize}
  \item Analyticity: $\boldsymbol{M}(x,k)$ is analytic for $k\in\mathbb{C}\setminus\Gamma$.
  \item Jump condition:  $\boldsymbol{M}$ has continuous boundary values $\boldsymbol{M}_{\pm}(x,k)$ as $k'\rightarrow k$ from $\Omega_{\pm}$ to $\Gamma$ and
      \begin{equation}
      \boldsymbol{M}_{+}(x,k)=\boldsymbol{M}_{-}(x,k)V_{x}(k),\quad k\in\Gamma,
      \end{equation}
      where $V_{x}(k)=e^{-ixk\operatorname{ad \sigma_{3}}}V(k)$ and $V(k)$ is given by \eqref{V1}-\eqref{V4}.
  \item Normalization: $\boldsymbol{M}(x,k)=I_{p+q}+\mathcal{O}(k^{-1})$ as $k\rightarrow\infty$.
\end{itemize}
\end{rhp}
\begin{proposition}\label{prh2}
Suppose that $V_{\pm}(k)$ are given by Proposition \ref{pvpm} and satisfy $V_{\pm}(k)-I_{p+q}\in H^{\frac{1}{2}+\varepsilon}(\Gamma)$, $\varepsilon>0$, then there exists a unique solution to the RH problem \ref{RH2} for every $x\in\mathbb{R}$.
\end{proposition}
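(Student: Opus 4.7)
The plan is to transplant the argument of Proposition \ref{prh1} from $\mathbb{R}$ to the self-intersecting contour $\Gamma=\mathbb{R}\cup\Gamma_{\infty}$. Using the triangular factorization $V=V_{-}^{-1}V_{+}$ supplied by Proposition \ref{pvpm} and its shifted version $V_{x}=V_{x-}^{-1}V_{x+}$ with $V_{x\pm}(k)=e^{-ixk\operatorname{ad}\sigma_{3}}V_{\pm}(k)$, I would set $W_{x}^{+}:=V_{x+}-I_{p+q}$ and $W_{x}^{-}:=I_{p+q}-V_{x-}$ and introduce the Beals--Coifman operator
\begin{equation*}
\mathcal{C}_{W_{x}}\nu=\mathcal{C}_{\Gamma}^{+}(\nu W_{x}^{-})+\mathcal{C}_{\Gamma}^{-}(\nu W_{x}^{+})
\end{equation*}
on $L^{2}(\Gamma)$. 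Because $\Gamma$ is Lipschitz with the orientation fixed in Figure \ref{f1}, the projections $\mathcal{C}_{\Gamma}^{\pm}$ are bounded on $L^{2}(\Gamma)$, and the embedding $H^{\frac{1}{2}+\varepsilon}(\Gamma)\hookrightarrow C_{0}(\Gamma)$ allows $W_{x}^{\pm}$ to be uniformly approximated by rational matrix functions; Propositions 4.1 and 4.2 of \cite{Zhou1989b} then yield that $I-\mathcal{C}_{W_{x}}$ is Fredholm of index zero, reducing the statement to triviality of its kernel.

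For the vanishing-lemma step I would verify the hypotheses of Theorem 9.3 of \cite{Zhou1989b}. The contour $\Gamma$ is Schwarz-reflection invariant, and by inspection of \eqref{V1}--\eqref{V4} together with the symmetries \eqref{m-1}, \eqref{s-1} and \eqref{ad0}, the jump satisfies the Schwarz conjugation relation $V_{2}(k)=V_{4}(k^{*})^{\dag}$ on $\Gamma_{\infty}$, while on the real pieces
\begin{equation*}
V_{1}+V_{1}^{\dag}=2\begin{bmatrix}I_{p}+\boldsymbol{R}\boldsymbol{R}^{\dag}&0_{p\times q}\\0_{q\times p}&I_{q}\end{bmatrix},\quad V_{3}+V_{3}^{\dag}=2\begin{bmatrix}I_{p}&0_{p\times q}\\0_{q\times p}&I_{q}+\boldsymbol{R}_{0}^{\dag}\boldsymbol{R}_{0}\end{bmatrix},
\end{equation*}
both of which are strictly positive definite. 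The triangular blocks on $\Gamma_{2}$, $\Gamma_{4}$ do not destroy this positivity once paired by Schwarz reflection. Combined with the matching conditions at $\pm S_{\infty}$ already established in Proposition \ref{pvpm}, the vanishing lemma forces $\ker_{L^{2}(\Gamma)}(I-\mathcal{C}_{W_{x}})=\{0\}$; hence $(I-\mathcal{C}_{W_{x}})$ is invertible, and the unique solution of the RH problem is given by the Cauchy integral
\begin{equation*}
\boldsymbol{M}(x,k)=I_{p+q}+\frac{1}{2\pi i}\int_{\Gamma}\frac{\nu(x,s)\bigl(W_{x}^{+}(s)+W_{x}^{-}(s)\bigr)}{s-k}\,ds,
\end{equation*}
where $\nu=I_{p+q}+\mathcal{C}_{W_{x}}\nu$.

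The main obstacle, relative to Proposition \ref{prh1}, is that $\Gamma$ is a non-simple curve with self-intersections at $\pm S_{\infty}$: the Cauchy projections must be interpreted on a piecewise smooth but non-simple contour, and the $L^{2}$ boundary values of $\mathcal{C}_{\Gamma}\phi$ at the crossing points must be compatible with the four jump matrices meeting there. This is precisely where the matching conditions verified in Proposition \ref{pvpm} (cf.\ Figure \ref{f2}) enter, ruling out pointwise discrepancies that would otherwise obstruct the Fredholm and vanishing-lemma steps. A secondary, milder issue is checking Schwarz symmetry of $V$ on $\Gamma_{\infty}$, whose entries involve block expressions such as $(\boldsymbol{D}_{0}^{-1})^{\dag}(k^{*})m_{21}^{-}(x_{0},k)\boldsymbol{A}^{-1}(k)$; this is handled by combining \eqref{m-1} with \eqref{2+abcd} and \eqref{ad0}, which essentially repeats the block-matrix manipulations of Proposition \ref{pv}. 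Once both are in hand, the rest of the argument is a direct translation of the defocusing proof.
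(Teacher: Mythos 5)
Your proposal follows essentially the same route as the paper: the Beals--Coifman reduction on $\Gamma$ with $W_{x}^{\pm}$ built from the factorization of Proposition \ref{pvpm}, the Fredholm index-zero step via rational approximation (Propositions 4.1--4.2 of \cite{Zhou1989b}), and the vanishing lemma (Theorem 9.3 of \cite{Zhou1989b}) using the Schwarz symmetry $V_{2}(k)=V_{4}^{\dag}(k^{*})$ on $\Gamma_{\infty}$ together with positive definiteness of $V+V^{\dag}$ on $\mathbb{R}$. One slip worth correcting: in the focusing case $V_{1}$ and $V_{3}$ are Hermitian, so $V_{1}+V_{1}^{\dag}=2V_{1}$ and $V_{3}+V_{3}^{\dag}=2V_{3}$ and the off-diagonal blocks do \emph{not} cancel as in your displayed formulas (that cancellation is a feature of the non-Hermitian defocusing jump); positivity nonetheless holds because $2V_{1}=2\begin{bmatrix}I_{p}&-\boldsymbol{R}\\0_{q\times p}&I_{q}\end{bmatrix}\begin{bmatrix}I_{p}&-\boldsymbol{R}\\0_{q\times p}&I_{q}\end{bmatrix}^{\dag}$ and $2V_{3}=2\begin{bmatrix}I_{p}&0_{p\times q}\\ \boldsymbol{R}_{0}^{\dag}&I_{q}\end{bmatrix}\begin{bmatrix}I_{p}&0_{p\times q}\\ \boldsymbol{R}_{0}^{\dag}&I_{q}\end{bmatrix}^{\dag}$, which is exactly the factorization the paper uses.
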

\begin{proof}
It follows from Proposition \ref{pvpm} that $V_{x}(k)$ admits the factorization
\begin{equation*}
V_{x}=\left(e^{-ixk\operatorname{ad \sigma_{3}
}}V_{-}^{-1}\right) \left(e^{-ixk\operatorname{ad \sigma_{3}}}V_{+}\right)\triangleq V_{x-}^{-1}V_{x+},
\end{equation*}
then we define
\begin{equation*}
W_{x}^{+}:=V_{x+}-I_{p+q}, \quad W_{x}^{-}:=I_{p+q}-V_{x-}.
\end{equation*}
Similarly, we only need to prove that the following Beals-Coifman integral equation for RH Problem \ref{RH2} is uniquely solvable
\begin{equation}\label{mu}
\mu(x,k)=I_{p+q}+(\mathcal{C}_{W_{x}}\mu)(x,k),
\end{equation}
where
\begin{equation}
\mathcal{C}_{W_{x}}\mu=\mathcal{C}_{\Gamma}^{+}(\mu W_{x}^{-})+\mathcal{C}_{\Gamma}^{-}(\mu W_{x}^{+}),
\end{equation}
and
\begin{equation*}
\mu=\boldsymbol{M}_{+}(I_{p+q}+W_{x}^{+})^{-1}=\boldsymbol{M}_{-}(I_{p+q}-W_{x}^{-})^{-1}.
\end{equation*}
Here, $C_{\Gamma}^{\pm}$ are the Cauchy integral operators on $L^{2}(\Gamma)$, see \cite{Zhou1989b}. By the Proposition 4.1 and 4.2 in \cite{Zhou1989b}, we obtain $(I-\mathcal{C}_{W_{x}})$ is a Fredholm operator with zero index since
$W_{x}^{+}$ and $W_{x}^{-}$ allow the uniform rational approximations.

Let us verify that jump matrix $V(k)$ given by Proposition \ref{pv} satisfies the conditions of Theorem 9.3 in \cite{Zhou1989b}.
It is easy to show that $V_{2}(k)=V_{4}^{\dag}(k^{*}),\ k\in \Gamma_{2}$ from  \eqref{V2} and \eqref{V4}.
For $V_{1}(k)$, we have
\begin{equation*}
\begin{aligned}
V_{1}+V_{1}^{\dag}=2\begin{bmatrix}I_{p}+\boldsymbol{R}\boldsymbol{R}^{\dag}&-\boldsymbol{R}\\-\boldsymbol{R}^{\dag}&I_{q}\end{bmatrix}=2\begin{bmatrix}I_{p}&-\boldsymbol{R}\\0_{q\times p}&I_{q}\end{bmatrix}\begin{bmatrix}I_{p}&-\boldsymbol{R}\\0_{q\times p}&I_{q}\end{bmatrix}^{\dag},
\end{aligned}
\end{equation*}
clearly the matrix $\begin{bmatrix}I_{p}&-\boldsymbol{R}\\0_{q\times p}&I_{q}\end{bmatrix}$ is invertible. Similarly, for $V_{3}(k)$ we have
\begin{equation*}
\begin{aligned}
V_{3}+V_{3}^{\dag}=2\begin{bmatrix}I_{p}&\boldsymbol{R}_{0}\\ \boldsymbol{R}_{0}^{\dag} &I_{q}+ \boldsymbol{R}_{0}^{\dag}\boldsymbol{R}_{0}(k)\end{bmatrix}=2\begin{bmatrix}I_{p}&0_{p\times q}\\ \boldsymbol{R}_{0}^{\dag} &I_{q}\end{bmatrix}\begin{bmatrix}I_{p}&0_{p\times q}\\ \boldsymbol{R}_{0}^{\dag} &I_{q}\end{bmatrix}^{\dag},
\end{aligned}
\end{equation*}
where the matrix $\begin{bmatrix}I_{p}&0_{p\times q}\\ \boldsymbol{R}_{0}^{\dag} &I_{q}\end{bmatrix}$ is invertible. Therefore $V(k)+V^{\dag}(k)$ is positive definite for $k\in\mathbb{R}$. Then from Theorem 9.3 in  \cite{Zhou1989b} we obtain $\ker_{L^{2}(\Gamma)}(I-\mathcal{C}_{W_{x}})=0$ and $(I-\mathcal{C}_{W_{x}})$ is invertible in $L^{2}(\Gamma)$. Furthermore, the solution to the RH problem \ref{RH2} is
given by
\begin{equation}\label{mc}
\begin{aligned}
\boldsymbol{M}(x,k)&=I_{p+q}+\mathcal{C}_{\Gamma}\left(\mu(W_{x}^{+}+W_{x}^{-})\right)(x,k)\\
&=I_{p+q}+\frac{1}{2\pi i}\int_{\Gamma}\frac{\mu(x,s)\left(W_{x}^{+}(s)+W_{x}^{-}(s)\right)}{s-k}ds.
\end{aligned}
\end{equation}
\end{proof}

\subsection{Estimates of the reconstructed potential}\label{s45}
For focusing matrix NLS equation, the potential $\boldsymbol{Q}(x)$ can be recovered by the asymptotic formula
\begin{equation}
\boldsymbol{Q}(x)=\lim_{k\rightarrow\infty}2ik\boldsymbol{M}_{12}(x,k).
\end{equation}
From \eqref{mc}, we can rewrite the asymptotic formula as
\begin{align}\label{q1}
\boldsymbol{Q}(x)=\left(-\frac{1}{\pi}\int_{\Gamma}\mu(x,k)\left(V_{x+}(k)-V_{x-}(k)\right)dk\right)_{12}.
\end{align}
However, it is very hard to use \eqref{q1} to directly estimate the potential since the $V_{\pm}-I_{p+q}$ does not vanish at $\pm S_{\infty}$. So next we use Zhou's method \cite{Zhou1989a,Zhou1998} to transform the jump matrix in order to further rewrite the reconstruction formula \eqref{q1}.
\begin{proposition}\label{pw}
When $V_{\pm}-I_{p+q}\in H^{1}(\partial\Omega_{\pm})$, then the matrix function $\eta(k)\in A(\mathbb{C}\setminus\Gamma)$ satisfying the following conditions can be constructed for $l=1$:
\begin{enumerate}[label=(\roman*)]
  \item $\eta_{\pm}\in R(\partial\Omega_{\pm})$ and $\eta_{\pm}-I_{p+q}=\mathcal{O}(k^{-2})$ as $k\rightarrow \infty$.
  \item $\eta_{\pm}$ has the same triangularity as $V_{\pm}$.
  \item $\eta_{\pm}(k)=V_{\pm}(k)+o((k\mp S_{\infty})^{l-1})$.
\end{enumerate}
The notations $A(\mathbb{C}\setminus\Gamma)$ and $R(\partial\Omega_{\pm})$ represent the space of analytic functions in  the region $\mathbb{C}\setminus\Gamma$ and the space of functions whose restrictions on $\partial\Omega_{\pm}$ are rational, respectively.
\end{proposition}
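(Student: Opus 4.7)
The plan is to build $\eta$ region-by-region on the four connected components $\Omega_1,\Omega_2,\Omega_3,\Omega_4$ cut out by $\Gamma$, using a scalar rational Hermite-type interpolation basis and tensoring with the appropriate boundary values of $V_\pm$. Since $\Omega_+=\Omega_1\cup\Omega_4$ and $\Omega_-=\Omega_2\cup\Omega_3$ are disjoint unions, the construction on each of the four pieces is independent, and the rationality on each $\overline{\Omega_j}$ together with analyticity off $\Gamma$ immediately yields the requirement $\eta\in A(\mathbb{C}\setminus\Gamma)$ with $\eta_\pm\in R(\partial\Omega_\pm)$.

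For each $j\in\{1,2,3,4\}$, I would fix an auxiliary pole $a_j\in\mathbb{C}\setminus\overline{\Omega_j}$ (e.g.\ $a_1,a_4$ inside the disc in the opposite half-plane, $a_2,a_3$ outside the disc in the opposite half-plane) and introduce the scalar rational basis
\begin{equation*}
\phi_+^{(j)}(k)=\frac{(S_\infty-a_j)^3}{2S_\infty}\cdot\frac{k+S_\infty}{(k-a_j)^3},\qquad
\phi_-^{(j)}(k)=-\frac{(-S_\infty-a_j)^3}{2S_\infty}\cdot\frac{k-S_\infty}{(k-a_j)^3}.
\end{equation*}
These functions are holomorphic on $\Omega_j$, decay as $\mathcal{O}(k^{-2})$ at infinity, and satisfy $\phi_\pm^{(j)}(\pm S_\infty)=1$, $\phi_\pm^{(j)}(\mp S_\infty)=0$. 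Writing $V_\bullet=V_+$ when $\Omega_j\subset\Omega_+$ and $V_\bullet=V_-$ when $\Omega_j\subset\Omega_-$, I would set
\begin{equation*}
\eta\bigl|_{\Omega_j}(k):=I_{p+q}+\phi_+^{(j)}(k)\bigl(V_\bullet(S_\infty)|_{\partial\Omega_j}-I_{p+q}\bigr)+\phi_-^{(j)}(k)\bigl(V_\bullet(-S_\infty)|_{\partial\Omega_j}-I_{p+q}\bigr).
\end{equation*}
The pointwise boundary values $V_\bullet(\pm S_\infty)|_{\partial\Omega_j}$ are well defined because $V_\pm-I_{p+q}\in H^1(\partial\Omega_j)\subset C(\partial\Omega_j)$ by Proposition \ref{pvpm}, and those matrices are each strictly lower or strictly upper triangular according to the four explicit triangularity statements at the end of that proposition.

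With this definition, property (i) is immediate: $\eta|_{\Omega_j}$ is rational with its single pole at $a_j\notin\overline{\Omega_j}$, and the $\mathcal{O}(k^{-2})$ decay is inherited from the scalars $\phi_\pm^{(j)}$. Property (ii) follows because only scalar multiples of $V_\bullet|_{\partial\Omega_j}-I_{p+q}$ are taken, so $\eta_\pm-I_{p+q}$ preserves the strict lower/upper triangularity of $V_\bullet$ on each piece. For (iii) with $l=1$, the interpolation identities $\phi_\pm^{(j)}(\pm S_\infty)=1$ and $\phi_\pm^{(j)}(\mp S_\infty)=0$ force $\eta_\pm(\pm S_\infty)=V_\pm(\pm S_\infty)$, hence $\eta_\pm(k)-V_\pm(k)=o(1)$ as $k\to\pm S_\infty$ by continuity of $V_\pm-I_{p+q}\in H^1(\partial\Omega_\pm)$.

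The one step requiring genuine care---and what I expect to be the only real obstacle---is verifying that the values $V_\bullet(\pm S_\infty)$ read off from the several boundary pieces meeting at $\pm S_\infty$ are compatible, so that the four locally defined rational pieces $\eta|_{\Omega_j}$ assemble into a single $\eta\in A(\mathbb{C}\setminus\Gamma)$ whose $\pm$-boundary values have the correct triangular structure on the whole of $\partial\Omega_\pm$. This compatibility is precisely the content of the matching identities $V_1V_2^{-1}V_3V_4^{-1}=I_{p+q}+o(1)$ as $k\to S_\infty$ and $V_3^{-1}V_2V_1^{-1}V_4=I_{p+q}+o(1)$ as $k\to -S_\infty$ already established in Proposition \ref{pvpm}; together with the strict triangularity of each factor, these identities guarantee that the corner values line up consistently and yield the desired piecewise rational $\eta$.
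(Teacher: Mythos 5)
Your construction is correct and is essentially the paper's own proof made explicit: the paper sets $\eta_{\pm}-I_{p+q}=(k-k_{0})^{-n}f$ with $f\in\mathbb{C}_{2l}[k]$ interpolating the Taylor data of $(k-k_{0})^{n}(V_{\pm}-I_{p+q})$ at $\pm S_{\infty}$ (via Appendix I of Zhou), and for $l=1$, $n=3$ this is exactly your Lagrange basis $\phi_{\pm}^{(j)}$ with $a_{j}=k_{0}$. The corner compatibility you flag at the end is already encoded in the hypothesis $V_{\pm}-I_{p+q}\in H^{1}(\partial\Omega_{\pm})$ (matching to order $0$ at the non-smooth points), which Proposition \ref{pvpm} established, so no further argument is needed there.
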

\begin{proof}
We provide a detailed proof using $V_{-}\upharpoonright_{\partial\Omega_{2}}$ as an example, and the
proofs for the other parts are similar. Choose $k_{0}$ in the complement of $\partial\Omega_{2}$ and denote by $f_{\pm S_{\infty}}$ the Taylor polynomial of degree $(l-1)$ of $(k-k_{0})^{n}(V_{-}-I_{p+q})\upharpoonright_{\partial\Omega_{2}}$ at $k=\pm S_{\infty}$. By $V_{-}\upharpoonright_{\partial\Omega_{2}}-I_{p+q}\in H^{1}(\partial\Omega_{2})$ and Appendix I in \cite{Zhou1989a}, there exists $f\in \mathbb{C}_{2l}[k]$ such that
\begin{equation*}
f-f_{\pm S_{\infty}}=o((k\mp S_{\infty})^{l-1}),
\end{equation*}
where $\mathbb{C}_{2l}[k]$ denote the linear space of complex polynomials of degree less than $2l$.
Clearly, $\eta_{-}-I_{p+q}:=(k-k_{0})^{-n}f$ satisfies the conditions (ii) and (iii). Choose $n\geq3$, then the condition (i) holds and $\eta_{-}-I_{p+q}\in H^{1,1}(\partial\Omega_{2})$.
\end{proof}

With the help of the matrix function $\eta$ constructed from Proposition \ref{pw} above, we define a new jump matrix $\mathcal{V}(k)$ by
\begin{align}
\mathcal{V}_{x}(k):=e^{-ixk\operatorname{ad}\sigma_{3}}\mathcal{V}(k)=e^{-ixk\operatorname{ad}\sigma_{3}
}\eta_{-}V(k)\eta_{+}^{-1},
\end{align}
and $\mathcal{V}(k)$ admits a factorization $\mathcal{V}=\mathcal{V}_{-}^{-1}\mathcal{V}_{+}$, where $\mathcal{V}_{-}=V_{-}\eta_{-}^{-1}$ and $\mathcal{V}_{+}=V_{+}\eta_{+}^{-1}$.
For $x\geq0$, if we let $\boldsymbol{M}(x,k)=\mathcal{M}(x,k)e^{-ixk\operatorname{ad}\boldsymbol{\sigma_{3}}}\eta(k)$, then $\mathcal{V}_{x}$ is the jump matrix of the RHP
\begin{equation}
\mathcal{M}_{+}(x,k)=\mathcal{M}_{-}(x,k)\mathcal{V}_{x}(k), \quad k\in\Gamma.
\end{equation}
Clearly, $e^{-ixk\operatorname{ad}\boldsymbol{\sigma_{3}}}\eta\in AL^{\infty}(\mathbb{C}\setminus\Gamma)\cap AL^{2}(\mathbb{C}\setminus\Gamma)$ is deduced from Proposition \ref{pw}, where $AL^{\infty}(\mathbb{C}\setminus\Gamma)\cap AL^{2}(\mathbb{C}\setminus\Gamma)$ represent the space of functions analytic in $\mathbb{C}\setminus\Gamma$ with $L^{\infty}\cap L^{2}$ boundary values.
From $\eta_{\pm}-I_{p+q}=\mathcal{O}(k^{-2})$ as $k\rightarrow\infty$ and the fact that $\boldsymbol{M}$, $\mathcal{M}$ derive the same solution $\mu$ of the associated Beals-Coifman equation, we have
\begin{align}\label{q2}
\begin{aligned}
\boldsymbol{Q}(x)=&\lim_{k\rightarrow\infty}2ik\boldsymbol{M}_{12}(x,k)=\lim_{k\rightarrow\infty}2ik\mathcal{M}_{12}(x,k)\\
=&\left(-\frac{1}{\pi}\int_{\Gamma}\mu(x,k)\left(\mathcal{V}_{x+}(k)-\mathcal{V}_{x-}(k)\right)dk\right)_{12}.
\end{aligned}
\end{align}
The matrix function $\eta$ not only do not contribute to the reconstruction of $\boldsymbol{Q}(x)$, but also makes $\mathcal{V}_{\pm}(k)-I_{p+q}$ vanishes at $\pm S_{\infty}$, i.e.
\begin{equation}\label{vanish}
\mathcal{V}_{\pm}(k)-I_{p+q}=o(1), \quad \text{as} \ k\rightarrow \pm S_{\infty},
\end{equation}
which is very favorable for subsequent estimates.

\begin{figure}
\begin{center}
\begin{tikzpicture}
\draw [fill=pink,ultra thick,color=red!10](-3.5,0) rectangle (3.5,3);
 \draw [fill=pink,ultra thick,color=white!10] (-3.5,0) rectangle (3.5,-2.2);
\filldraw[color=white!10](0,0)-- (2,0) arc (0:180:2);
\filldraw[color=red!10](0,0)-- (2,0) arc (0:-180:2);
\filldraw [color=red!10] (-2,0) -- plot [domain=-2:2,smooth] (\x,{sqrt(-\x*\x/4+1)}) -- (2,0) -- (0,0);
\filldraw [color=white!10] (-2,0) --(2,0) plot [domain=-2:2,smooth] (\x,{-sqrt(-\x*\x/4+1)});
\draw(0,0)circle(2cm);
\draw[dashed][rotate around={0:(0,0)}] (0,0) ellipse (2 and 1);
\draw[fill,black] (2,0) circle [radius=0.04];
\draw[fill,black] (-2,0) circle [radius=0.04];
\draw [ ](-3.5,0)--(3.5,0)  node[right, scale=1] {$\mathbb{R}$};
\draw [-latex](2.95,0)--(3,0);
\draw [-latex](-2.65,0)--(-2.6,0);
\draw [-latex](0,0)--(0.1,0);
\draw [-latex]  (0.15,2)--(0.2,2);
\draw [-latex]  (0.15,-2)--(0.2,-2);
\draw [-latex]  (-0.1,1)--(-0.15,1);
\draw [-latex]  (-0.1,-1)--(-0.15,-1);
\node at (3, 0.4 )  {$+$};
\node at (3, -0.4 )  {$-$};
\node at (-2.8, 0.4 )  {$+$};
\node at (-2.8, -0.4 )  {$-$};
\node at (0, 0.4 )  {$+$};
\node at (0, -0.4 )  {$-$};
\node at (0, 1.4 )  {$-$};
\node at (0, -1.4 )  {$+$};
\node [right] at (-2,-0.3)  {$-S_{\infty}$};
\node [right] at (2,-0.3)  {$S_{\infty}$};
\end{tikzpicture}
\end{center}
\caption{The oriented contour $\widehat{\Gamma}$.}
\label{f3}
 \end{figure}
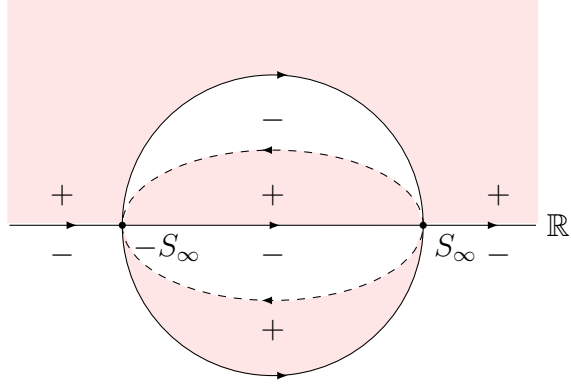

To make full use of the properties of the Cauchy projections on $\mathbb{R}$, the orientation of the line segment $(-S_{\infty},+S_{\infty})$ can reversed by adding the ellipse (dashed) as in Figure \ref{f3}. We denote the new contour as $\widehat{\Gamma}$ and redefine $\mathcal{V}_{\pm}$ as follows:
\begin{enumerate}[label=(\roman*)]
  \item $\mathcal{V}_{\pm}=I_{p+q}$ on the added (dashed) contours.
  \item $\mathcal{V}_{+}(k)$ and $\mathcal{V}_{-}(k)$ are the lower and upper triangular factors in the factorization of $\mathcal{V}$ ($\mathcal{V}^{-1}$) on $\mathbb{R}$ for $|k|>S_{\infty}$ ($|k|<S_{\infty}$).
  \item For $k\in\Gamma_{\infty}$, $\mathcal{V}_{\pm}=I_{p+q}$ for $\text{Im} k\lessgtr0$, and $\mathcal{V}_{\pm}=\mathcal{V}$ for $\text{Im} k\gtrless0$.
\end{enumerate}
The newly defined $\mathcal{V}_{\pm}-I_{p+q}$ is strictly block upper/lower triangular matrix. Moreover, $\mathcal{V}_{\pm}-I_{p+q}$ still belongs to $H^{1,1}(\partial\Omega_{\pm})$ and satisfies \eqref{vanish}. Clearly, we can further rewrite the reconstruction formula \eqref{q2} in the form
\begin{equation}\label{r2}
\boldsymbol{Q}(x)=\left(-\frac{1}{\pi}\int_{\widehat{\Gamma}}\mu(x,k)\left(\mathcal{V}_{x+}(k)-\mathcal{V}_{x-}(k)\right)dk\right)_{12}.
\end{equation}
The associated Beals-Coifman operator is given by
\begin{equation}\label{BC}
\mathcal{C}_{\mathcal{V}_{x}}\phi:=\mathcal{C}_{\widehat{\Gamma}}^{+}\left(\phi\left(I_{p+q}-\mathcal{V}_{x-}\right)\right)+\mathcal{C}_{\widehat{\Gamma}}^{-}\left(\phi(\mathcal{V}_{x+}-I_{p+q})\right),
\end{equation}
where the contour $\widehat{\Gamma}$ can be seen in Figure \ref{f3}. We introduce the notation
\begin{equation}
\Gamma_{\pm}:=\mathbb{R}\cup\{\mathbb{C}^{\pm}\cap\widehat{\Gamma}\}
\end{equation}
for the convenience of subsequent estimates. The definition of the operator $\mathcal{C}_{\widehat{\Gamma}\rightarrow\Gamma'}^{\pm}$ can be found in Zhou \cite{Zhou1998}, where $\Gamma'$ be another curve, especially,
$\mathcal{C}_{\widehat{\Gamma}}^{\pm}=\mathcal{C}_{\widehat{\Gamma}\rightarrow\widehat{\Gamma}}^{\pm}$.
\begin{remark}\label{rem}
The corresponding uniform boundedness of $(I-\mathcal{C}_{\mathcal{V}_{x}})^{-1}$ and the Lipschitz continuity of the map $\mathcal{V}\mapsto\{x\mapsto (I-\mathcal{C}_{\mathcal{V}_{x}})^{-1}\}$ also hold by repeating the same analysis in Lemma \ref{urb}.
\end{remark}
\begin{remark}\label{rem1}
An argument similar to the one used Lemma \ref{lax} shows that the solution $\boldsymbol{M}(x,t,k)$ of the RH problem \ref{RH2} also satisfies the Lax pair \eqref{x1}-\eqref{t1}, where $U(x,t)$ and $P(x,t,k)$ are given by
\begin{equation}
\begin{aligned}
U(x,t)&=-\frac{1}{2\pi}\operatorname{ad}\sigma_{3}\left(\int_{\widehat{\Gamma}}\mu(x,t,k)\left(V_{x+}(t,k)-V_{x-}(t,k)\right)dk\right)\\
&=\begin{bmatrix}0_{p\times p}&\boldsymbol{Q}(x,t)\\ -\boldsymbol{Q}^{\dag}(x,t)&0_{q\times q}\end{bmatrix},
\end{aligned}
\end{equation}
and
\begin{equation}
P(x,t,k)=2kU(x,t)+i\sigma_{3}U_{x}(x,t)-i\sigma_{3}U^{2}(x,t),
\end{equation}
respectively.
\end{remark}
\begin{lemma}
(See \cite{Zhou1998}).
If $x\geq0$, then the jump matrix $\mathcal{V}=\mathcal{V}_{-}^{-1}\mathcal{V}_{+}$ constructed above satisfies
\begin{align}
&\left\|\mathcal{C}_{\widehat{\Gamma}\rightarrow\Gamma_{+}}^{+}(\mathcal{V}_{x-}-I_{p+q})\right\|_{L^{2}(\Gamma_{+})}\leq\frac{c}{\sqrt{1+x^{2}}}\left\|\mathcal{V}_{-}-I_{p+q}\right\|_{H^{1}},\label{11}\\
&\left\|\mathcal{C}_{\widehat{\Gamma}\rightarrow\Gamma_{-}}^{-}(\mathcal{V}_{x+}-I_{p+q})\right\|_{L^{2}(\Gamma_{-})}\leq\frac{c}{\sqrt{1+x^{2}}}\left\|\mathcal{V}_{+}-I_{p+q}\right\|_{H^{1}},\label{22}\\
&\left\|\mathcal{V}_{x-}-I_{p+q}\right\|_{L^{2}(\Gamma_{4})}\leq\frac{c}{\sqrt{1+x^{2}}}\left\|\mathcal{V}_{-}-I_{p+q}\right\|_{H^{1}},\label{33}\\
&\left\|\mathcal{V}_{x+}-I_{p+q}\right\|_{L^{2}(\Gamma_{2})}\leq\frac{c}{\sqrt{1+x^{2}}}\left\|\mathcal{V}_{+}-I_{p+q}\right\|_{H^{1}},\label{44}\\
&\left\|(\mathcal{C}_{\mathcal{V}_{x}})^{2}I_{p+q}\right\|_{L^{2}(\widehat{\Gamma})}\leq\frac{c}{\sqrt{1+x^{2}}}\left\|\mathcal{V}_{-}-I_{p+q}\right\|_{H^{1}}\left\|\mathcal{V}_{+}-I_{p+q}\right\|_{H^{1}}.\label{55}
\end{align}
\end{lemma}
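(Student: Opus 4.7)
The plan is to adapt the scalar-case arguments of Zhou \cite{Zhou1998} (and their analogue in Jenkins et al.\ \cite{Jenkins}) to the $(p+q)\times(p+q)$ matrix setting. All five estimates exploit the same mechanism: the conjugation $e^{-ixk\operatorname{ad}\sigma_{3}}$ multiplies each off-diagonal block of $\mathcal{V}_{\pm}-I_{p+q}$ by $e^{\pm 2ixk}$; the strict block triangularity of $\mathcal{V}_{\pm}-I_{p+q}$ established at the end of Subsection \ref{s42} guarantees that only one such factor appears per block, and the direction of analytic continuation required by the relevant Cauchy projection forces that factor to be the decaying one whenever $x\geq 0$. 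The regularity $\mathcal{V}_{\pm}-I_{p+q}\in H^{1,1}(\partial\Omega_{\pm})$ from Proposition \ref{pvpm} together with the vanishing condition \eqref{vanish} at $\pm S_{\infty}$ are the two key inputs.

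First I would establish \eqref{11} and \eqref{22}. Using the strict block triangularity, one decomposes $\mathcal{V}_{x\mp}-I_{p+q}$ block-by-block; on the real arcs of $\widehat{\Gamma}$ the nonzero block carries $e^{\mp 2ixk}$, and a Plancherel computation on the Fourier side (mirroring the proof of Lemma \ref{fou}) turns the restriction to the correct half-line of frequencies produced by $\mathcal{C}_{\widehat{\Gamma}\rightarrow\Gamma_{\pm}}^{\pm}$ into an integral of the squared Fourier transform of $\mathcal{V}_{\pm}-I_{p+q}$ over $|\eta|>2x$, which is bounded by $\langle x\rangle^{-2}\|\mathcal{V}_{\pm}-I_{p+q}\|_{H^{1}}^{2}$. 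On the circular arcs the same estimate is obtained after deforming the contour inward; this is permissible because $\mathcal{V}_{\pm}=I_{p+q}$ on the added elliptical pieces of Figure \ref{f3} and the blockwise exponentials decay inside the appropriate half-disc.

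For \eqref{33} and \eqref{44}, on the arc $\Gamma_{4}\subset\mathbb{C}^{-}$ the nonzero off-diagonal block of $\mathcal{V}_{x-}-I_{p+q}$ carries $e^{2ixk}$, whose modulus is $e^{-2x|\operatorname{Im}k|}$ on $\{|k|=S_{\infty}\}\cap\mathbb{C}^{-}$ and is therefore pointwise exponentially small in $x$ away from $\pm S_{\infty}$. Combining this decay with \eqref{vanish} (which removes the endpoint contribution where $|\operatorname{Im}k|\to 0$) and an integration by parts in arc length that trades one $k$-derivative for one power of $x$, one obtains the $\langle x\rangle^{-1}$ bound with $\|\mathcal{V}_{-}-I_{p+q}\|_{H^{1}}$ on the right; the estimate \eqref{44} follows identically on $\Gamma_{2}$. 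Finally \eqref{55} is derived by expanding $(\mathcal{C}_{\mathcal{V}_{x}})^{2}I_{p+q}$ via \eqref{BC}: four bilinear terms appear, and in each one factor is controlled by \eqref{11}--\eqref{22} (contributing $\langle x\rangle^{-1/2}\|\mathcal{V}_{\pm}-I_{p+q}\|_{H^{1}}$) while the other is controlled by the $L^{2}$-boundedness of $\mathcal{C}_{\widehat{\Gamma}}^{\pm}$ combined with the embedding $H^{1}\hookrightarrow L^{\infty}$ (contributing a bounded $\|\mathcal{V}_{\mp}-I_{p+q}\|_{H^{1}}$), reproducing the claimed product bound.

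The principal obstacle is that $\widehat{\Gamma}$ is self-intersecting at $\pm S_{\infty}$, so the Cauchy projections $\mathcal{C}_{\widehat{\Gamma}}^{\pm}$ are $L^{2}$-bounded only because of the matching conditions verified in Proposition \ref{pvpm}, and the contour deformations used in \eqref{11} and \eqref{22} must respect the chosen orientation while picking up no new jump across the added elliptical contours. Verifying this latter point at the level of the full block matrix (as opposed to each scalar triangular factor as in the $2\times 2$ argument of Zhou) is the most delicate step; it succeeds precisely because the triangular structure of $\mathcal{V}_{\pm}-I_{p+q}$ is preserved in every block and the vanishing condition \eqref{vanish} kills the only possible corner contribution.
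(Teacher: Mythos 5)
The paper does not actually prove this lemma---it is stated with the citation ``(See \cite{Zhou1998})'' and used as a black box---so there is no in-paper proof to compare against. Your sketch reconstructs the right argument: the strict block triangularity of $\mathcal{V}_{\pm}-I_{p+q}$, the resulting single oscillatory factor $e^{\mp 2ixk}$ per nonzero block, the Plancherel/Fourier-support argument of Lemma \ref{fou} on the real arcs, and, on the circular arcs, the combination of pointwise decay in $x|\operatorname{Im}k|$ with the corner vanishing \eqref{vanish} and $H^{1}$ regularity. One bookkeeping slip: on $\Gamma_{4}\subset\mathbb{C}^{-}$ the nonzero $(1,2)$ block of $\mathcal{V}_{x-}-I_{p+q}$ carries $e^{-2ixk}$, not $e^{2ixk}$; with your exponential the modulus would be $e^{+2x|\operatorname{Im}k|}$ and \eqref{33} would fail. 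The modulus you then write down, $e^{-2x|\operatorname{Im}k|}$, is the correct one for the correct exponential, so the conclusion stands, but this is precisely the sign that makes the construction work and should be stated carefully.

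More substantively, your treatment of \eqref{55} is incomplete as written. Expanding $(\mathcal{C}_{\mathcal{V}_{x}})^{2}I_{p+q}$ via \eqref{BC} does produce four bilinear terms, but the two ``same-sign'' terms $\mathcal{C}_{\widehat{\Gamma}}^{+}\bigl(\mathcal{C}_{\widehat{\Gamma}}^{+}(I_{p+q}-\mathcal{V}_{x-})\,(I_{p+q}-\mathcal{V}_{x-})\bigr)$ and $\mathcal{C}_{\widehat{\Gamma}}^{-}\bigl(\mathcal{C}_{\widehat{\Gamma}}^{-}(\mathcal{V}_{x+}-I_{p+q})\,(\mathcal{V}_{x+}-I_{p+q})\bigr)$ cannot be controlled by the recipe ``one factor from \eqref{11}--\eqref{22}, one factor in $L^{\infty}$'': the inner projection must be evaluated on the support of the outer multiplier, part of which ($\Gamma_{4}$, respectively $\Gamma_{2}$) lies on the wrong side of $\widehat{\Gamma}$ for \eqref{11}--\eqref{22} to apply. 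These two terms are in fact identically zero, because the Cauchy operators act entrywise and hence preserve the strict block triangularity, and the product of two strictly upper (respectively lower) block-triangular matrices vanishes. Only the two cross terms survive, and those are estimated exactly as you describe. This nilpotency cancellation is the reason the factorization is arranged so that $\mathcal{V}_{\pm}-I_{p+q}$ are strictly triangular, and it needs to be invoked explicitly for \eqref{55}; with that addition your outline is a faithful matrix version of Zhou's argument.
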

\begin{proposition}\label{vq}
If $\mathcal{V}_{\pm}-I_{p+q}\in H^{1,1}(\partial\Omega_{\pm})$, then $\boldsymbol{Q}\in H^{1,1}(\mathbb{R})$, and the map $(\mathcal{V}_{\pm}-I_{p+q})\mapsto \boldsymbol{Q}$ is Lipschitz continuous from $H^{1,1}(\partial\Omega_{\pm})$ to $H^{1,1}(\mathbb{R})$.
\end{proposition}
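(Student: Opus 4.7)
The plan is to mirror the structure of Proposition \ref{H11}, but with the reconstruction formula \eqref{r2} on the contour $\widehat{\Gamma}$ in place of \eqref{r1} on $\mathbb{R}$. Throughout, the key extra leverage over the defocusing case will come from the vanishing property \eqref{vanish} together with the estimates \eqref{11}--\eqref{55} and the uniform resolvent bound recorded in Remark \ref{rem}. I will first prove the decay and smoothness of $\boldsymbol{Q}$ on $[0,\infty)$, then treat $(-\infty,0]$ by switching to the left-normalized objects $\widetilde{\boldsymbol{M}},\widetilde{V},\widetilde{\mathcal{V}}$, and finally upgrade the pointwise statements to Lipschitz dependence on the scattering data.

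For $x\geq 0$, iterate the Beals--Coifman equation $\mu=I_{p+q}+\mathcal{C}_{\mathcal{V}_{x}}\mu$ twice and substitute into \eqref{r2} to obtain the decomposition
\begin{equation*}
\int_{\widehat{\Gamma}}\mu(x,k)\bigl(\mathcal{V}_{x+}(k)-\mathcal{V}_{x-}(k)\bigr)\,dk=f_{1}(x)+f_{2}(x)+f_{3}(x),
\end{equation*}
where $f_{1}$ integrates $\mathcal{V}_{x+}-\mathcal{V}_{x-}$, $f_{2}$ integrates $(\mathcal{C}_{\mathcal{V}_{x}}I_{p+q})(\mathcal{V}_{x+}-\mathcal{V}_{x-})$, and $f_{3}$ contains the resolvent $(I-\mathcal{C}_{\mathcal{V}_{x}})^{-1}$ applied to $\mathcal{C}_{\mathcal{V}_{x}}I_{p+q}$. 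Since $\mathcal{V}_{\pm}-I_{p+q}$ is strictly block off--diagonal of the correct triangularity, $f_{2}$ is block diagonal and makes no contribution to the $(1,2)$ block; the $L^{2,1}$ bound on $(f_{1})_{12}$ follows from Plancherel along the smooth pieces of $\widehat{\Gamma}$, the vanishing \eqref{vanish} at $\pm S_{\infty}$ (which prevents a boundary contribution at the self--intersection), and the $H^{1}$ hypothesis on $\mathcal{V}_{\pm}-I_{p+q}$. For $f_{3}$, set $g=(I-\mathcal{C}_{\mathcal{V}_{x}})^{-1}\mathcal{C}_{\mathcal{V}_{x}}I_{p+q}$; Remark \ref{rem} together with \eqref{55} gives $\|g\|_{L^{2}(\widehat{\Gamma})}\lesssim (1+x^{2})^{-1/2}$, and then an application of $\mathcal{C}_{\widehat{\Gamma}\to\Gamma_{\pm}}^{\pm}$ together with \eqref{11}--\eqref{22} and the $L^{2}$--boundedness of the Cauchy projections yields $|(f_{3})_{12}|\lesssim (1+x^{2})^{-1}$. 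Combining these three pieces gives $\boldsymbol{Q}\in L^{2,1}(\mathbb{R}^{+})$.

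To place $\boldsymbol{Q}$ in $H^{1}(\mathbb{R}^{+})$, use Remark \ref{rem1}: since $\boldsymbol{M}(x,k)$ solves the $x$--Lax equation, so does $\mu$ (modulo the conjugation by $\mathcal{V}_{x\pm}$ on each side of $\widehat{\Gamma}$), and differentiating the identity behind \eqref{r2} in $x$ yields
\begin{equation*}
\partial_{x}\!\int_{\widehat{\Gamma}}\mu(\mathcal{V}_{x+}-\mathcal{V}_{x-})\,dk=H_{1}(x)+H_{2}(x),
\end{equation*}
with $H_{1}$ of the same shape as $f_{1}+f_{2}+f_{3}$ but with an extra factor $k$ inside (so it is controlled by the $H^{1}$ norm of $\mathcal{V}_{\pm}-I_{p+q}$, exactly as in Proposition \ref{H11}), and $H_{2}=U\cdot(\text{integral})$ which lies in $L^{2}$ once $\boldsymbol{Q}\in L^{\infty}$ is known from the $L^{2,1}$ step. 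The estimates on $(-\infty,0]$ are obtained verbatim by repeating the argument with $\widetilde{\mathcal{V}}_{\pm}$ and $\widetilde{\boldsymbol{M}}$ in place of $\mathcal{V}_{\pm}$ and $\boldsymbol{M}$, exploiting the opposite triangularity recorded in the remark after \eqref{tiv} so that Lemma 4.14--type decay is gained in $-x$.

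Finally, for Lipschitz continuity, let $\mathcal{V}_{\pm},\widetilde{\mathcal{V}}_{\pm}$ be two scattering data in a bounded subset of $H^{1,1}(\partial\Omega_{\pm})$ with associated $\mu,\tilde\mu$. Writing
\begin{equation*}
\mu-\tilde\mu=(I-\mathcal{C}_{\mathcal{V}_{x}})^{-1}(\mathcal{C}_{\mathcal{V}_{x}}-\mathcal{C}_{\widetilde{\mathcal{V}}_{x}})(I-\mathcal{C}_{\widetilde{\mathcal{V}}_{x}})^{-1}I_{p+q},
\end{equation*}
the uniform resolvent bound of Remark \ref{rem} reduces everything to bounding $\mathcal{C}_{\mathcal{V}_{x}}-\mathcal{C}_{\widetilde{\mathcal{V}}_{x}}$ on $L^{2}(\widehat{\Gamma})$ by $\|\mathcal{V}_{\pm}-\widetilde{\mathcal{V}}_{\pm}\|_{H^{1/2+\varepsilon}(\partial\Omega_{\pm})}$, which is immediate from \eqref{BC}. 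Plugging this into the $f_{1},f_{2},f_{3}$ and $H_{1},H_{2}$ decompositions then shows $\|\boldsymbol{Q}-\widetilde{\boldsymbol{Q}}\|_{H^{1,1}}\lesssim \|\mathcal{V}_{\pm}-\widetilde{\mathcal{V}}_{\pm}\|_{H^{1,1}(\partial\Omega_{\pm})}$. The main obstacle, as I see it, is not any one of these estimates individually but the combinatorial bookkeeping across the self--intersecting contour $\Gamma$: one must verify that after the reorientation to $\widehat{\Gamma}$ and the conjugation by $\eta$ (which ensures \eqref{vanish}), the strict block triangularity of $\mathcal{V}_{\pm}-I_{p+q}$ is preserved on every arc so that $f_{2}$ really drops out of the $(1,2)$ block, and that the splits $\Gamma_{\pm}$ used in \eqref{11}--\eqref{22} correctly match the analyticity of $\mathcal{C}_{\widehat{\Gamma}}^{\pm}$ near the intersection points $\pm S_{\infty}$.
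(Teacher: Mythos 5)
Your overall architecture (decompose \eqref{r2} by iterating the Beals--Coifman equation, isolate the resolvent term, use Remark \ref{rem} plus the decay estimates, then mirror the $H^{1}$ step of Proposition \ref{H11} and treat $x\le 0$ via $\widetilde{\boldsymbol{M}}$) is the right one, but there is a genuine gap at the center of your decay argument. You iterate the equation only twice and set $g=(I-\mathcal{C}_{\mathcal{V}_{x}})^{-1}\mathcal{C}_{\mathcal{V}_{x}}I_{p+q}$, claiming $\|g\|_{L^{2}(\widehat{\Gamma})}\lesssim (1+x^{2})^{-1/2}$ ``from Remark \ref{rem} together with \eqref{55}.'' But \eqref{55} bounds $(\mathcal{C}_{\mathcal{V}_{x}})^{2}I_{p+q}$, not $\mathcal{C}_{\mathcal{V}_{x}}I_{p+q}$, and the single application in fact does \emph{not} decay on the self-intersecting contour. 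The reason is that $\mathcal{C}^{+}_{\widehat{\Gamma}}(I_{p+q}-\mathcal{V}_{x-})$ must be evaluated on all of $\widehat{\Gamma}$, including the arcs lying in $\mathbb{C}^{-}$; at a point $k_{0}\in\Gamma_{4}$ the contribution of the real-axis data is $\frac{1}{2\pi i}\int_{\mathbb{R}}\frac{\boldsymbol{R}(s)e^{-2ixs}}{s-k_{0}}\,ds$ with $k_{0}\in\mathbb{C}^{-}$, which is the ``wrong-sided'' Cauchy transform of $\boldsymbol{R}e^{-2ixs}$ and carries no decay as $x\to+\infty$ (Lemma \ref{fou} only controls $\mathcal{C}^{+}_{\mathbb{R}}(\boldsymbol{R}e^{-2ixk})$). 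This is exactly the structural difference from the defocusing case on $\mathbb{R}$, where your three-term split does work. Note that the estimates \eqref{11}--\eqref{22} are stated only on $\Gamma_{\pm}$, not on all of $\widehat{\Gamma}$, which is the symptom of the same phenomenon.

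The paper repairs this by iterating once more: $\mu=I_{p+q}+\mathcal{C}_{\mathcal{V}_{x}}I_{p+q}+(\mathcal{C}_{\mathcal{V}_{x}})^{2}I_{p+q}+\mathcal{C}_{\mathcal{V}_{x}}(I-\mathcal{C}_{\mathcal{V}_{x}})^{-1}(\mathcal{C}_{\mathcal{V}_{x}})^{2}I_{p+q}$, giving a four-term decomposition $F_{1}+F_{2}+F_{3}+F_{4}$. The resolvent is now applied to $(\mathcal{C}_{\mathcal{V}_{x}})^{2}I_{p+q}$, for which \eqref{55} legitimately gives $O((1+x^{2})^{-1/2})$ decay because the composition of the two opposite-type projections eliminates the non-decaying pieces; the new explicit term $F_{3}$ is then estimated by hand, splitting the outer integral over $\mathbb{R}$ and $\Gamma_{4}$ and applying Schwarz with \eqref{fou1}, \eqref{11} and \eqref{33}. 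You should adopt this extra iteration; without it your bound on $(f_{3})_{12}$ does not close. A secondary, more minor point: your treatment of $f_{1}$ on the circular arc is too quick --- Plancherel does not apply there, and one needs the vanishing \eqref{vanish} precisely in order to integrate by parts on $\Gamma_{4}$ and obtain $\frac{1}{2ix}\int_{\Gamma_{4}}e^{-2ixk}\partial_{k}\mathcal{V}_{-}\,dk$, whose $L^{2,1}(\mathbb{R}^{+})$ membership then follows from a Laplace-transform argument.
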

\begin{proof}
For $x\geq0$, the integral in the right-hand side of \eqref{r2} can be  rewritten in the form
\begin{equation*}
\int_{\widehat{\Gamma}}\mu(x,k)\left(\mathcal{V}_{x+}(k)-\mathcal{V}_{x-}(k)\right)dk\triangleq F_{1}(x)+F_{2}(x)+F_{3}(x)+F_{4}(x),
\end{equation*}
where
\begin{equation*}
\begin{aligned}
&F_{1}(x)=\int_{\widehat{\Gamma}}\left(\mathcal{V}_{x+}(k)-\mathcal{V}_{x-}(k)\right)dk,\\
&F_{2}(x)=\int_{\widehat{\Gamma}}\left(\mathcal{C}_{\mathcal{V}_{x}}I_{p+q}\right)\left(\mathcal{V}_{x+}(k)-\mathcal{V}_{x-}(k)\right)dk,\\
&F_{3}(x)=\int_{\widehat{\Gamma}}\left((\mathcal{C}_{\mathcal{V}_{x}})^{2}I_{p+q}\right)\left(\mathcal{V}_{x+}(k)-\mathcal{V}_{x-}(k)\right)dk,\\
&F_{4}(x)=\int_{\widehat{\Gamma}}\left(\mathcal{C}_{\mathcal{V}_{x}}(I-\mathcal{C}_{\mathcal{V}_{x}})^{-1}(\mathcal{C}_{\mathcal{V}_{x}})^{2}I_{p+q}\right)\left(\mathcal{V}_{x+}(k)-\mathcal{V}_{x-}(k)\right)dk.
\end{aligned}
\end{equation*}
We can decompose $F_{1}(x)$ into
\begin{equation*}
F_{1}(x)=\int_{\mathbb{R}}\left(\mathcal{V}_{x+}-\mathcal{V}_{x-}\right)dk+\int_{\Gamma_{2}}\left(\mathcal{V}_{x+}-I_{p+q}\right)dk+\int_{\Gamma_{4}}\left(I_{p+q}-\mathcal{V}_{x-}\right)dk,
\end{equation*}
where the first integral belongs to $L^{2,1}(\mathbb{R}^{+})$ by Fourier transform and the second integral makes no contribution to the potential $\boldsymbol{Q}$. Use \eqref{vanish}, and integrate by parts to obtain
\begin{equation*}
\int_{\Gamma_{4}}\left(I_{p+q}-\mathcal{V}_{x-}\right)dk=\frac{1}{2ix}\int_{\Gamma_{4}}e^{-2ixk}\partial_{k}\mathcal{V_{-}}dk.
\end{equation*}
The integral in the right-hand side of the above equality belongs to $L^{2,1}(\mathbb{R}^{+})$ by Laplace transform, hence, $F_{1}\in L^{2,1}(\mathbb{R}^{+})$. Since $F_{2}(x)$ is a block diagonal matrix, the estimate of $F_{2}(x)$ is not needed. We rewrite $F_{3}(x)$ in the following form
\begin{equation*}
\begin{aligned}
F_{3}(x)=\int_{\widehat{\Gamma}}&\left(\mathcal{C}_{\widehat{\Gamma}}^{+}\left(\mathcal{C}_{\widehat{\Gamma}}^{-}\left(\mathcal{V}_{x+}-I_{p+q}\right)\right)\left(I_{p+q}-\mathcal{V}_{x-}\right)\right)\left(\mathcal{V}_{x+}-I_{p+q}\right)\\
&+\left(\mathcal{C}_{\widehat{\Gamma}}^{-}\left(\mathcal{C}_{\widehat{\Gamma}}^{+}\left(I_{p+q}-\mathcal{V}_{x-}\right)\right)\left(\mathcal{V}_{x+}-I_{p+q}\right)\right)\left(I_{p+q}-\mathcal{V}_{x-}\right)dk.
\end{aligned}
\end{equation*}
By the estimates \eqref{fou1}, \eqref{11}, \eqref{33} and Schwarz inequality, we obtain
\begin{align*}
\left|\left(F_{3}\right)_{12}\right|\leq&\left|\left(\int_{\mathbb{R}}+\int_{\Gamma_{4}}\right)\left(\mathcal{C}_{\widehat{\Gamma}}^{-}\left(\mathcal{C}_{\widehat{\Gamma}}^{+}\left(I_{p+q}-\mathcal{V}_{x-}\right)\right)\left(\mathcal{V}_{x+}-I_{p+q}\right)\right)\left(I_{p+q}-\mathcal{V}_{x-}\right)dk\right|\\
\leq&\left|\int_{\mathbb{R}}\left(\mathcal{C}_{\Gamma_{+}\rightarrow\mathbb{R}}^{-}\left(\mathcal{C}_{\widehat{\Gamma}\rightarrow\Gamma_{+}}^{+}\left(I_{p+q}-\mathcal{V}_{x-}\right)\right)\left(\mathcal{V}_{x+}-I_{p+q}\right)\right)\mathcal{C}_{\mathbb{R}}^{+}\left(I_{p+q}-\mathcal{V}_{x-}\right)dk\right|\\
&+\left|\int_{\Gamma_{4}}\left(\mathcal{C}_{\Gamma_{+}\rightarrow\Gamma_{4}}^{-}\left(\mathcal{C}_{\widehat{\Gamma}\rightarrow\Gamma_{+}}^{+}\left(I_{p+q}-\mathcal{V}_{x-}\right)\right)\left(\mathcal{V}_{x+}-I_{p+q}\right)\right)\left(I_{p+q}-\mathcal{V}_{x-}\right)dk\right|\\
\lesssim&\left\|\mathcal{C}_{\widehat{\Gamma}\rightarrow\Gamma_{+}}^{+}\left(I_{p+q}-\mathcal{V}_{x-}\right)\right\|_{L^{2}(\Gamma_{+})}\left\|\mathcal{C}_{\mathbb{R}}^{+}\left(I_{p+q}-\mathcal{V}_{x-}\right)\right\|_{L^{2}(\mathbb{R})}\\
&+\left\|\mathcal{C}_{\widehat{\Gamma}\rightarrow\Gamma_{+}}^{+}\left(I_{p+q}-\mathcal{V}_{x-}\right)\right\|_{L^{2}(\Gamma_{+})}\left\|\left(I_{p+q}-\mathcal{V}_{x-}\right)\right\|_{L^{2}(\Gamma_{4})}\\
\lesssim&\frac{1}{1+x^{2}}.
\end{align*}
For $F_{4}(x)$, we set $h=(I-\mathcal{C}_{\mathcal{V}_{x}})^{-1}(\mathcal{C}_{\mathcal{V}_{x}})^{2}I_{p+q}$. It follows from Remark \ref{rem} and \eqref{55} that
\begin{equation}\label{h}
\|h\|_{L^{2}(\widehat{\Gamma})}\leq\left\|(I-\mathcal{C}_{\mathcal{V}_{x}})^{-1}\right\|_{L^{2}\rightarrow L^{2}}\left\|(\mathcal{C}_{\mathcal{V}_{x}})^{2}I_{p+q}\right\|_{L^{2}}
\lesssim\frac{1}{\sqrt{1+x^{2}}}.
\end{equation}
By the inequalities \eqref{fou1}, \eqref{33} and \eqref{h}, we can use Schwarz inequality to estimate
\begin{align*}
\left|\left(F_{4}\right)_{12}\right|\leq&\left|\int_{\widehat{\Gamma}}\left(\mathcal{C}_{\widehat{\Gamma}}^{-}h\left(\mathcal{V}_{x+}-I_{p+q}\right)\right)\left(I_{p+q}-\mathcal{V}_{x-}\right)dk\right|\\
\leq&\left|\int_{\mathbb{R}}\left(\mathcal{C}_{\widehat{\Gamma}\rightarrow\mathbb{R}}^{-}h\left(\mathcal{V}_{x+}-I_{p+q}\right)\right)\mathcal{C}_{\mathbb{R}}^{+}\left(I_{p+q}-\mathcal{V}_{x-}\right)dk\right|\\
&+\left|\int_{\Gamma_{4}}\left(\mathcal{C}_{\widehat{\Gamma}\rightarrow\Gamma_{4}}^{-}h\left(\mathcal{V}_{x+}-I_{p+q}\right)\right)\left(I_{p+q}-\mathcal{V}_{x-}\right)dk\right|\\
\lesssim&\left\|h\right\|_{L^{2}(\widehat{\Gamma})}\left(\left\|\mathcal{C}_{\mathbb{R}}^{+}\left(I_{p+q}-\mathcal{V}_{x-}\right)\right\|_{L^{2}(\mathbb{R})}+\left\|I_{p+q}-\mathcal{V}_{x-}\right\|_{L^{2}(\Gamma_{4})}\right)\\
\lesssim&\frac{1}{1+x^{2}},
\end{align*}
which completes the proof of $\boldsymbol{Q}\in L^{2,1}(\mathbb{R}^{+})$.

Proceeding as in the proof of Proposition \ref{H11}, we have $\boldsymbol{Q}\in H^{1}(\mathbb{R}^{+})$ for focusing matrix NLS equation. For $x\leq0$, we can use a similar manner to obtain the corresponding estimate by considering $\widetilde{\boldsymbol{M}}(x,k)$ that is associated with $\widetilde{\boldsymbol{Q}}_{x_{0}}$ and satisfies \eqref{tim}. Finally, it follows from Remark \ref{rem} that the map is Lipschitz continuous.
\end{proof}

\subsection{Proof of Theorem \ref{thm} for focusing case}\label{s46}
Based on the results of Sections \ref{s2} and \ref{s4}, we can prove the
$L^{2}$-Sobolev space bijectivity of the inverse scattering
and existence of global solutions to  the Cauchy problem \eqref{mnls}-\eqref{mnls1} for focusing   matrix NLS equation.
\begin{proof}
The results of Propositions \ref{pvpm} and \ref{vq} indicate that the maps \eqref{D22} and \eqref{I22} are Lipschitz continuous.
Given initial date $\boldsymbol{Q}_{0}\in H^{1,1}(\mathbb{R})$, Proposition \ref{ct} shows that the map $\boldsymbol{Q}_{0}\mapsto V_{\pm}(t,k)-I_{p+q}$ is Lipschitz continuous from $H^{1,1}(\mathbb{R})$ to $C([-T,T], H^{1,1}(\partial\Omega_{\pm}))$ for every $T>0$. Proposition \ref{prh2} and Remark \ref{rem1} show that the $\boldsymbol{Q}(x,t)$ defined by \eqref{r2} solves \eqref{mnls}. According to the results of Proposition \ref{vq}, we have the map $\boldsymbol{Q}_{0}\mapsto \boldsymbol{Q}$ is Lipschitz continuous from $H^{1,1}(\mathbb{R})$ to $C\left([-T,T], H^{1,1}(\mathbb{R})\right)$ for every $T>0$. Finally, we can use a similar way as defocusing case to prove that the uniqueness of the solution.
\end{proof}

\section*{Acknowledgements}
\addcontentsline{toc}{section}{Acknowledgements}
This work is supported by the National Natural Science Foundation of China   (Grant No. 12271104, 51879045). \vspace{2mm}
	
	\noindent\textbf{Data Availability Statements}
	
	The data that supports the findings of this study are available within the article.\vspace{2mm}
	
	\noindent{\bf Conflict of Interest}
	
	The authors have no conflicts to disclose.

\section*{References}
\addcontentsline{toc}{section}{References}

\end{document}